\def\de{{\mathrm{d}}}
\begin{document}

\title{Scattered Data Histopolation in Averaging Kernel Hilbert Spaces}
\shorttitle{Scattered Data Histopolation in Averaging Kernel Hilbert Spaces}

\author{%
{\sc
Ludovico Bruni Bruno \thanks{Email: ludovico.brunibruno@polito.it}
} \\[2pt]
Dipartimento di Scienze Matematiche
"Giuseppe Luigi Lagrange"
\\Politecnico di Torino, Corso Duca degli Abruzzi 24, 10129 Torino, Italy\\[6pt]
{\sc and}\\[6pt]
{\sc Giacomo Cappellazzo \thanks{Email: giacomo.cappellazzo@math.unipd.it} and Wolfgang Erb \thanks{Email: wolfgang.erb@unipd.it (corresponding author)}}\\[2pt]
Dipartimento di Matematica "Tullio Levi-Civita"
\\Universit{\`a} degli Studi di Padova, Via Trieste 63, 35121 Padova, Italy\\[6pt]
{\sc and}\\[6pt]
{\sc Mohammad Karimnejad Esfahani \thanks{Email: m.karimnejad7395@gmail.com}}\\[2pt]
 Dipartimento di Matematica \\ Universit{\`a} degli Studi di Genova, 
 Via Dodecaneso 35, 16146, Genova, Italy 
}

\shortauthorlist{L. Bruni Bruno, G. Cappellazzo, W. Erb, M. Karimnejad Esfahani}

\maketitle

\begin{abstract}
{Kernel-based methods offer a powerful and flexible mathematical framework for addressing  histopolation problems. In histopolation, the available input data does not consist of pointwise function samples but of averages taken over intervals or higher-dimensional regions, and these mean values serve as a basis for reconstructing or approximating the target function. While classical interpolation requires continuity of the underlying function, histopolation can be performed in larger function spaces. In the framework of kernel methods, we will introduce and study the so-called averaging kernel Hilbert spaces (AKHS's) for this purpose. Within this setting, we develop systematic construction principles for averaging kernels and provide characterizations based on the Fourier-Plancherel transform. In addition, we analyze several representative histopolation scenarios in order to highlight properties of this approximation method, including conditions for unisolvence and possible error estimates. Finally, we present numerical experiments that shed some light on the convergence behavior of the presented approach and demonstrate its practical effectiveness.}{Histopolation of scattered average values; Kernel-based approximation; Averaging Kernel Hilbert Spaces; Reproducing Kernel Hilbert Spaces.} 
\end{abstract}

\section{Introduction}

Histopolation refers to approximation methods in which the mean values of functions over domains, such as intervals, cells, or higher-dimensional regions, are used as initial data rather than pointwise function samples used in interpolation. The term histopolation itself is a portmanteau formed from histogram and interpolation, and has been studied intensively in the context of splines, cf. \citep{Siewer2008,Schoenberg1973,Subbotin1977} or 
\citep[Chapter VIII]{deBoor2001}, in which spline approximants have to satisfy given area matching conditions. Using alternative basis functions, this idea was also studied for rational splines \citep{FischerOja2007}, periodic splines \citep{Delvos1987}, or polynomials \citep{BruniAccioErbNudo2025,BruniErb,BruniErb2025,robidoux2006}.  
 
Also kernel-based methods have been used to solve histopolation-type problems. Compared to polynomials, kernel approximants turn out to be more stable and robust, and offer more flexibility when it comes to handle unstructured data sets. This was, for instance, observed for finite volume methods and in ENO/WENO schemes \citep{AboiyarGeorgoulisIske2010,IskeSonar1996Structure,SonarFV,SonarENO,SonarENO2,WendlandFiniteVolumes}. 
Another important field of application for kernel-based histopolation is the Radon transform, where the given initial data corresponds to integral values over entire lines or manifolds rather than intervals \citep{DeMarchiIskeSironi2016,DeMarchiIskeSantin2018,Sironi2011,vanDenBoogaart2007}. More generally, kernel-based histopolation arises as a special case in the context of generalized Hermite-Birkhoff interpolation, as for instance worked out in \citep{Albrecht2024,AlbrechtIske2025,Iske1995,Wu1992,WendlandStokes} or \citep[Chapter 16]{Wendland_2004}. 

\begin{figure}[htbp]
    \centering
    \includegraphics[width=0.34\linewidth]{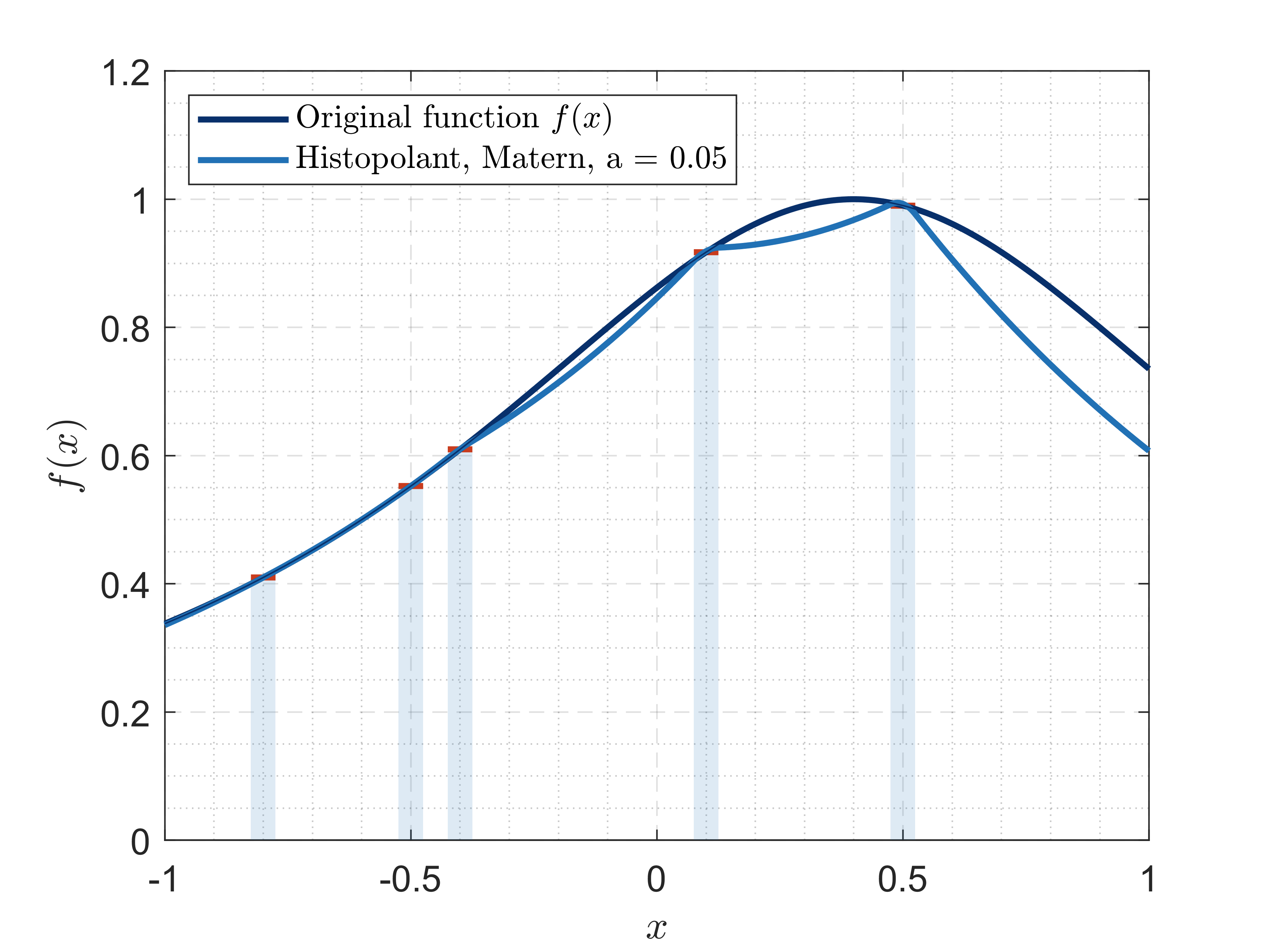} \hspace{-5mm}
    \includegraphics[width=0.34\linewidth]{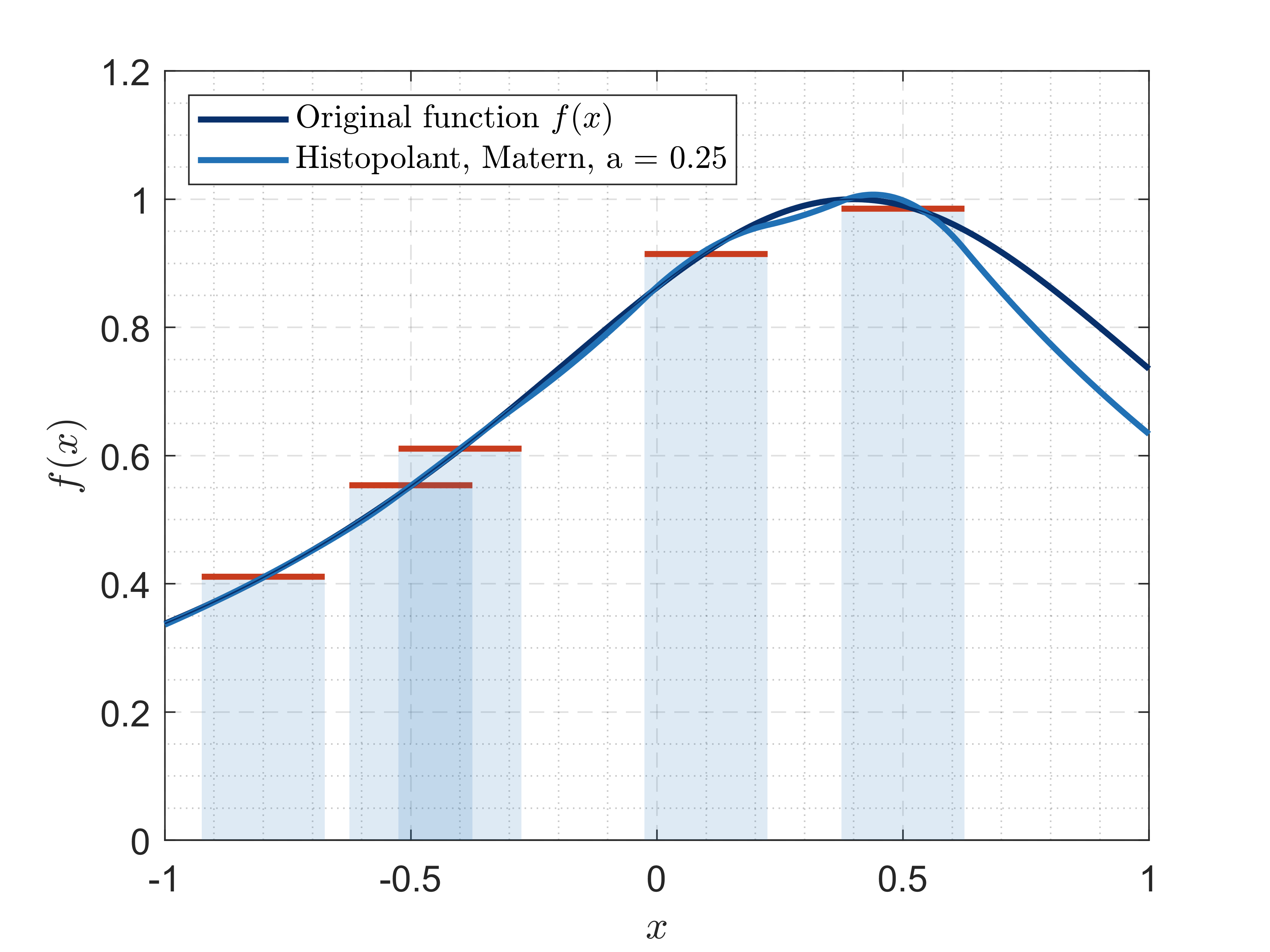} \hspace{-5mm}
    \includegraphics[width=0.34\linewidth]{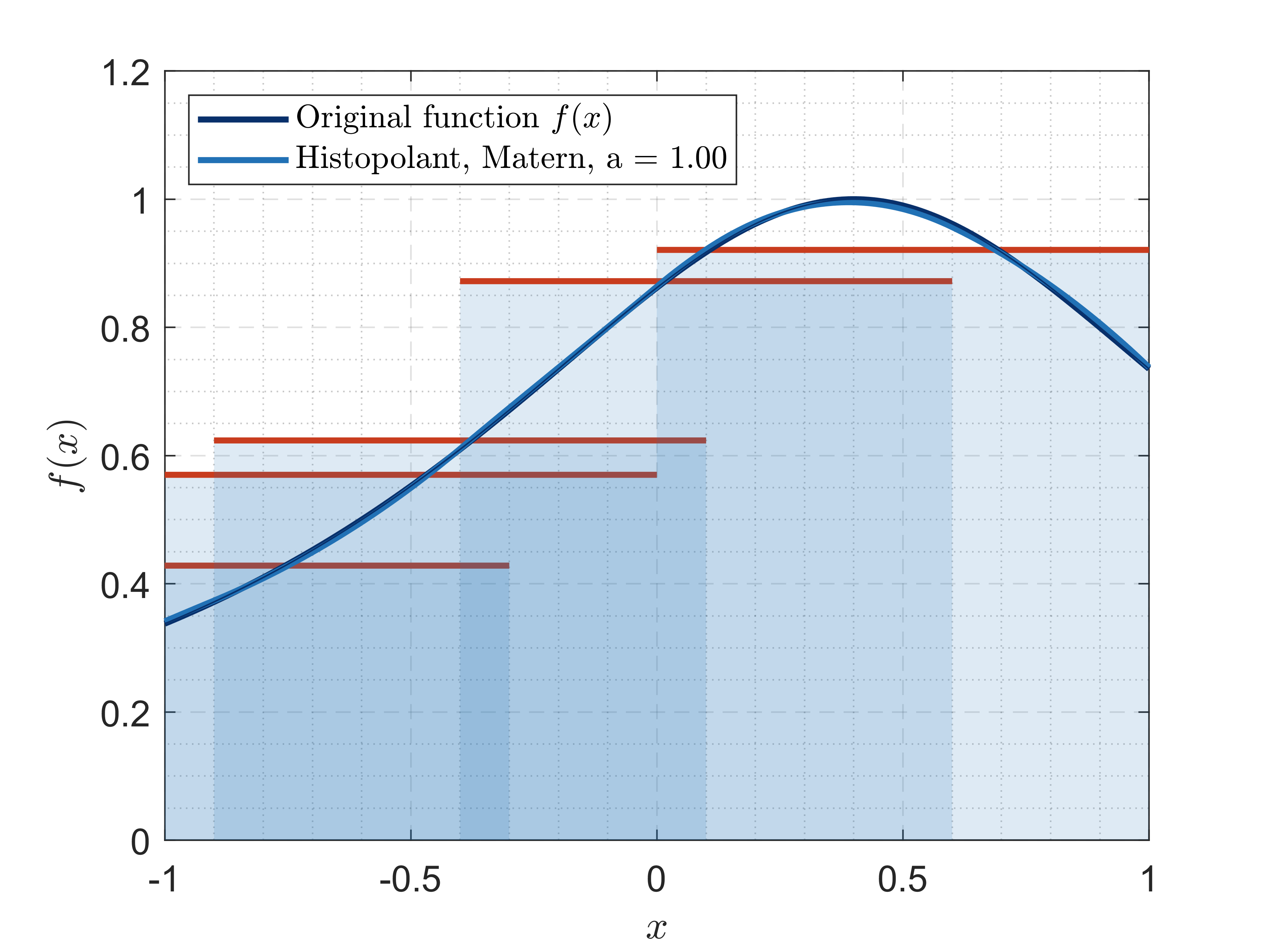}
    \caption{Histopolation of average values of the function $f(x) = \frac{1}{1+(x-0.4)^2}$ over $5$ interval segments with length $a \in \{0.05, 0.25, 1\}$ using shifts of averaged Mat\'{e}rn functions as kernels, see Example \ref{ex:RUAK} (i). }
    \label{fig:histopolationmatern}
\end{figure}

Common to all kernel-based methods is the use of an underlying reproducing kernel Hilbert space (RKHS), sometimes also referred to as native space, which serves as the fundamental approximation space for formulating (generalized) interpolation problems. These spaces contain a reproducing kernel that enables to represent point-evaluation functionals as elements of the Hilbert space itself. If the reproducing kernel is continuous these spaces are subspaces of continuous functions. On the other hand, it is known that a RKHS cannot contain all continuous functions on an uncountable compact domain \citep{steinwart2024}. By leaving the Hilbert space setting and turning to reproducing kernel Banach spaces, it becomes nevertheless possible to expand the respective approximation spaces \citep{ehring2025,lin2022}. 

In histopolation, point evaluations are not essential. Instead, it suffices that averages over the considered subdomains are continuous functionals in the underlying approximation space, a substantially weaker assumption. To obtain a theoretical foundation for histopolation, we will therefore introduce and systematically study averaging kernel Hilbert spaces (AKHS's), in which only the specified averaging functionals are required to be continuous. This new framework enables the approximation of more general, also non-continuous functions. A representative example is the space $L_2(\mathbb{R}^d)$ of square-integrable functions. Although it is not a RKHS, it can be interpreted as an AKHS. In this sense, AKHS's extend naturally the classical native-space theory and allow to leave the pure RKHS framework while still remaining within a Hilbert space setting.  

Nevertheless, AKHS's and RKHS's are closely related. In fact, we will show in Section \ref{sec:AKHS} that there exists a natural isometric isomorphism between an AKHS and an associated RKHS, enabling histopolation to be reformulated as interpolation problem in RKHS's. This relation between an AKHS and its associated RKHS has deep theoretical and practical implications for histopolation: it provides a way to characterize averaging kernels through known descriptions of reproducing kernels (Section \ref{sec:Fourier}), it allows histopolation matrices to be stated explicitly in terms of the reproducing kernel, and it yields criteria for unisolvence based on the corresponding criteria for the reproducing kernel (Section \ref{sec:histopolation}).

\begin{figure}[htbp]
    \centering
    \includegraphics[width=0.34\linewidth]{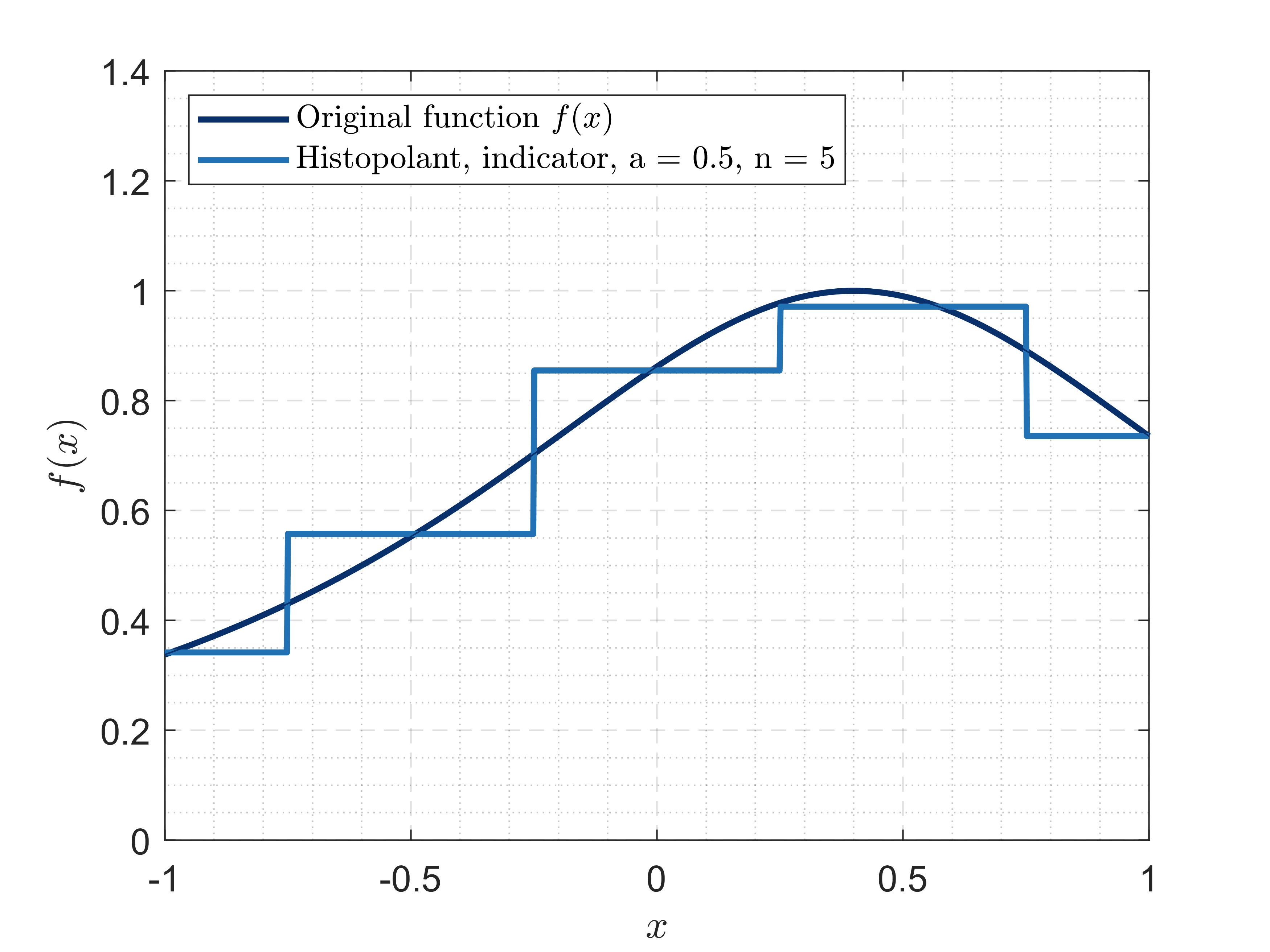} \hspace{-5mm}
    \includegraphics[width=0.34\linewidth]{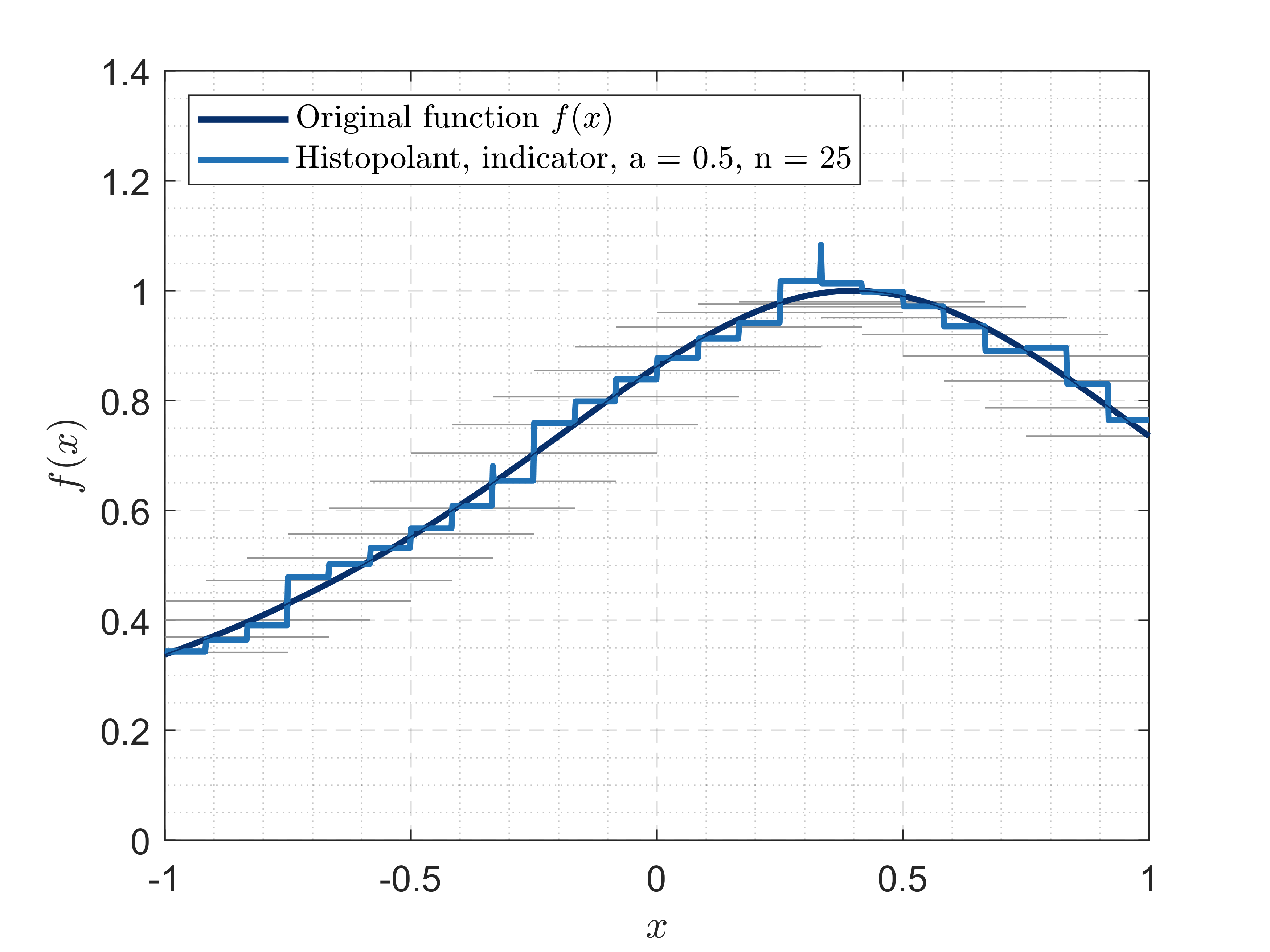} \hspace{-5mm}
    \includegraphics[width=0.34\linewidth]{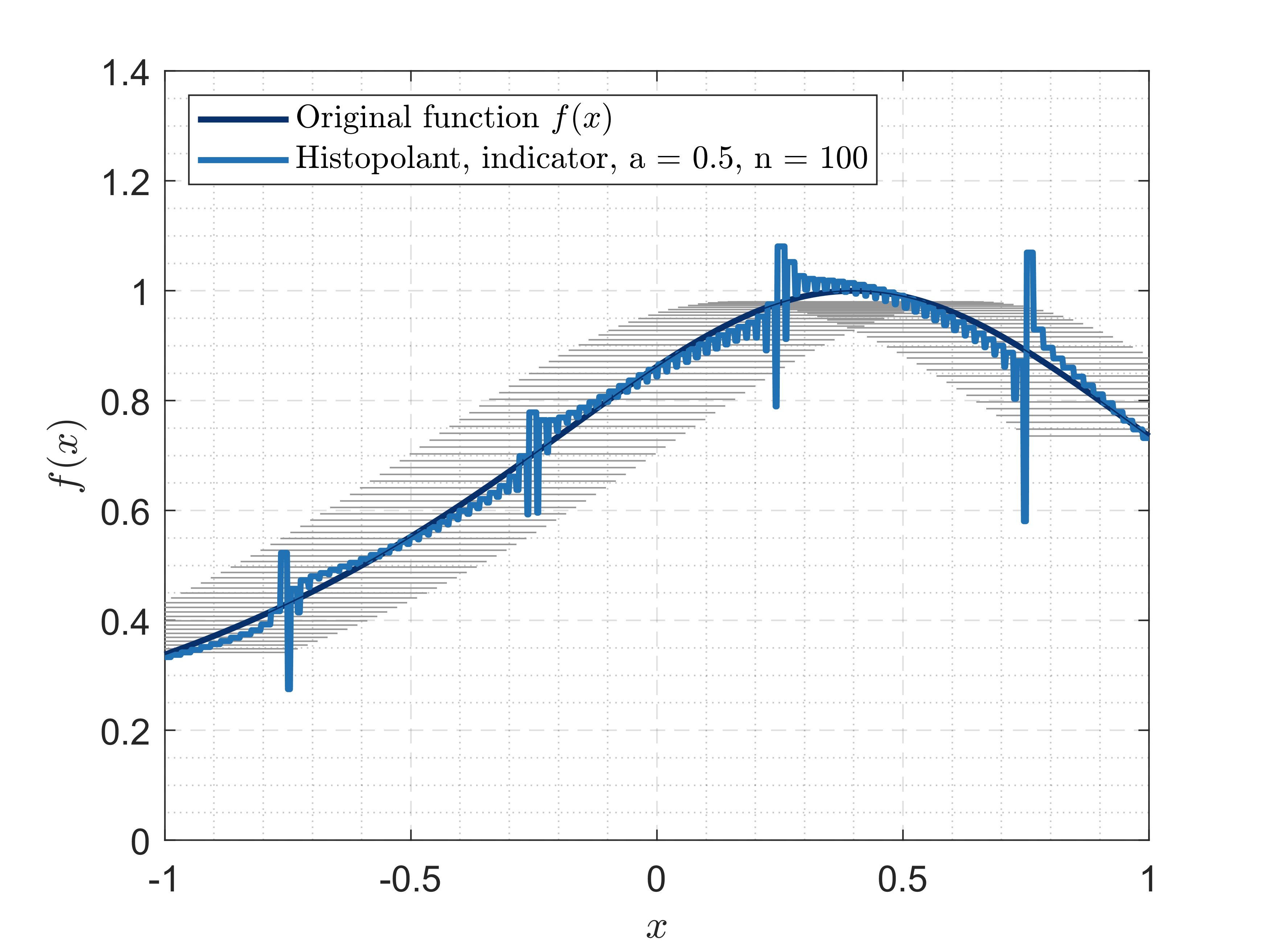}
    \caption{Histopolation of average values of the function $f(x) = \frac{1}{1+(x-0.4)^2}$ over $n \in \{5,25,100\}$ uniform intervals of length $a = 0.5$ using the indicator functions in \eqref{eq:indicatorkernel1D} as averaging kernels. }
    \label{fig:histopolationindicator}
\end{figure}

Once unisolvence is established, questions related to the convergence of histopolation towards a given exact function become significant. These questions turn out to be quite subtle. On the one hand, mean convergence for approximation in AKHS's can be derived from respective convergence estimates for reproducing kernels. More precisely, in Section \ref{sec:errorestimates} we obtain convergence criteria for the mean values of the histopolants, again as a consequence of the isomorphism between the AKHS and its associated RKHS. On the other hand, uniform convergence cannot be guaranteed without imposing additional assumptions on the subdomains. In fact, the numerical experiments in Section \ref{sec:numericalexperiments} will show that uniform convergence does not occur if intervals with constant length are chosen as averaging domains, even though the example in Fig. \ref{fig:histopolationmatern} visually suggests a good approximation accuracy if the averaged Mat{\'e}rn kernel is applied. Similar phenomena emerge for non-continuous averaging kernels based on indicator functions, such as the ones shown in Fig. \ref{fig:histopolationindicator}: uniform convergence of the corresponding histopolants seems not possible in this case, whereas convergence in the mean can still be achieved. 

To enhance the applicability of the theory on AKHS's,  
we will put considerable attention on the explicit construction of averaging kernels and their characterizations. In Section \ref{sect:exampleAKHS}, we present several examples and explicit formulas showing how to construct averaging kernels and their associated reproducing kernels. Particular focus is given to non-continuous kernels and to kernels built upon the averages of existing kernels. Likewise, in Section \ref{sec:Fourier}, we investigate AKHS's in which averages are taken over uniform domains and characterize these spaces via the Fourier-Plancherel transform. In addition, Section \ref{sect:implementationstrategy} explains how averaging kernels can be approximated using quadrature formulas when the underlying domains have a more complex topology, or when there is a lack of closed formulas. Remarkably, many relevant properties, such as symmetry and positive definiteness of the histopolation matrix, can be carried over to this situation as well. In the final Section \ref{sec:numericalexperiments}, in addition to the aforementioned experiments aimed at numerically studying the convergence behavior of kernel-based histopolation, we show how these methods can be applied to image processing tasks, such as image compression through pixel binning and image upscaling.  

\section{Averaging Kernel Hilbert Spaces} \label{sec:AKHS}

In the following, we assume that $\Omega $ 
is a subset of $\mathbb{R}^d$ and $\mathcal{H}$ is a Hilbert space of real-valued functions on $\Omega$, endowed with pointwise addition and scalar multiplication of the functions. The relative inner product is denoted by $\langle \cdot, \cdot \rangle_{\mathcal{H}}$. 

We consider a collection $\{\omega_{\tau} \ : \ \tau \in \mathcal{T}\}$ of Lebesgue measurable subsets in $\Omega$, where $\mathcal{T}$ is an arbitrary set of indices. For the Hilbert space $\mathcal{H}$, we then consider the linear \emph{averaging functionals}
\[
\lambda_\tau : f \mapsto \frac{1}{|\omega_\tau|}\int_{\omega_\tau} f(x) \mathrm{d}x, \quad \text{for all} \; f \in \mathcal{H}, \; \tau \in \mathcal{T},
\]
where $ | \omega_\tau | $ denotes the Lebesgue measure of the set $ \omega_\tau $, and assume that they are continuous on $\mathcal{H}$.
These functionals provide the averages of a function $f$ over the subsets $\omega_\tau$. 
As the functionals $\lambda_\tau$ are continuous, the Riesz representation theorem ensures that we can represent them as dual elements $A(\cdot, \tau) \in \mathcal{H}$ in the Hilbert space such that
\[\lambda_{\tau}(f) = \langle f, A(\cdot, \tau)\rangle_{\mathcal{H}}. \]
For obvious reasons, we refer to the kernel function $A :\Omega \times \mathcal{T} \to \mathbb{R}$ as \emph{averaging kernel}. Further, we call the subspace 
\[ \mathcal{N}_A(\Omega) := \overline{\mathrm{span} \{A(\cdot, \tau) \ : \ \tau \in \mathcal{T}\}} \subseteq \mathcal{H}\]
an \emph{averaging kernel Hilbert space (AKHS)}. In particular, in this Hilbert space the following averaging property holds true: 
\[
\frac{1}{|\omega_\tau|} \int_{\omega_\tau} f(x) \mathrm{d} x = \lambda_\tau(f) = \langle f, A(\cdot, \tau) \rangle_{\mathcal{H}} \quad \text{for all} \; f \in \mathcal{N}_A(\Omega).
\]
If we regard $A(\cdot,\tau)$ as a function in $\mathcal{N}_A(\Omega)$, we can as well calculate its averages and get
\[
\frac{1}{|\omega_\rho|}\int_{\omega_\rho} A(y,\tau) \mathrm{d}y = \lambda_\rho(A(\cdot,\tau)) = \langle A(\cdot,\tau), A(\cdot,\rho) \rangle_{\mathcal{H}},
\]
where $A(\cdot,\rho) \in \mathcal{N}_A(\Omega)$ is the element in $\mathcal{N}_A(\Omega)$ associated with the averaging functional $\lambda_\rho$.
This allows us to define a new  
associated kernel $K: \mathcal{T} \times \mathcal{T} \to \mathbb{R}$ by
\[
K(\tau, \rho) := \langle A(\cdot,\tau), A(\cdot,\rho) \rangle_{\mathcal{H}}.
\]
Based on this definition, it is easy to see that $K: \mathcal{T} \times \mathcal{T} \to \mathbb{R}$ is a symmetric and positive semi-definite kernel on the index set $\mathcal{T}$. Specifically, for every $n \in \mathbb{N}$, $\tau_1, \ldots, \tau_n \in \mathcal{T}$, and $c_1, \ldots, c_n \in \mathbb{R}$, we have:
\[
\sum_{i,j=1}^n c_i c_j K(\tau_i, \tau_j) = \left\|\sum_{i=1}^n c_i A(\cdot,\tau_i)\right\|_{\mathcal{H}}^2 \geq 0.
\]
Further, if the functions $A(\cdot, \tau)$ are linearly independent, the kernel $K$ is positive definite, implying a strict inequality in the latter formula if the coefficients $c_i$ are not vanishing simultaneously. 
We can associate to the kernel $K$ a reproducing kernel Hilbert space $\mathcal{N}_K(\mathcal{T})$ that is given as the completion 
\[ \mathcal{N}_K(\mathcal{T}) := \overline{\mathrm{span} \{K(\cdot, \tau) \ : \ \tau \in \mathcal{T}\}}, \]
in which the inner product of two pre-Hilbert space elements written as $f(\tau) = \sum_{i = 1}^n c_i K(\tau, \tau_i)$ and $g(\tau) = \sum_{i = 1}^n d_i K(\tau, \tau_i)$ is defined as
\[ \langle f, g \rangle_{\mathcal{N}_{K}(\mathcal{T})} := \sum_{i,j =1}^n c_i d_j K(\tau_i,\tau_j).\]

We then obtain the following isometry, which is analogous to those established in the context of Hermite-Birkhoff interpolation inside RKHS's \citep[see][Theorem 16.8]{Wendland_2004}. Novel in our setting is that within the framework of AKHS's we are no longer restricted to start in an initial RKHS. 

\begin{theorem} \label{thm:AKHSmain}
Assume that the functionals $\{ \lambda_{\tau} \ : \ \tau \in \mathcal{T}\}$ are continuous in the space $\mathcal{H}$. Then, the linear operator $\Lambda : \mathcal{N}_A(\Omega) \to \mathcal{N}_K(\mathcal{T})$ given as $(\Lambda f)(\tau) = \lambda_{\tau}(f)$ provides an isometric isomorphism between the averaging kernel Hilbert space $\mathcal{N}_A(\Omega)$ and the reproducing kernel Hilbert space $\mathcal{N}_K(\mathcal{T})$. In particular, for the respective inner products we have
\[ \langle f, g \rangle_{\mathcal{H}} = \langle \Lambda f, \Lambda g \rangle_{\mathcal{N}_K(\mathcal{T})}.\]
\end{theorem}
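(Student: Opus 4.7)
The plan is to first construct $\Lambda$ on the dense spanning set of $\mathcal{N}_A(\Omega)$, verify that it is well-defined and inner-product preserving there, then extend by continuity to the full AKHS and deduce surjectivity from density of the image together with the isometric property.

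More concretely, I would begin by defining $\Lambda_0$ on the pre-Hilbert subspace $V := \mathrm{span}\{A(\cdot,\tau) : \tau \in \mathcal{T}\}$ by sending each generator to $\Lambda_0 A(\cdot,\tau) = K(\cdot,\tau)$ and extending linearly. The key consistency check is that for any $h = \sum_{i=1}^n c_i A(\cdot,\tau_i) \in V$ and any $\rho \in \mathcal{T}$,
\[
(\Lambda_0 h)(\rho) = \sum_{i=1}^n c_i K(\rho,\tau_i) = \sum_{i=1}^n c_i \langle A(\cdot,\tau_i),A(\cdot,\rho)\rangle_{\mathcal{H}} = \langle h, A(\cdot,\rho)\rangle_{\mathcal{H}} = \lambda_\rho(h),
\]
so the formula $(\Lambda_0 h)(\tau) = \lambda_\tau(h)$ agrees with the pointwise definition in the statement. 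In particular, if $h = 0$ in $\mathcal{H}$, then $\Lambda_0 h$ vanishes identically as a function on $\mathcal{T}$, which establishes that $\Lambda_0$ is well-defined on $V$ irrespective of the representation of $h$.

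Next I would verify the isometry on $V$. For $f = \sum_i c_i A(\cdot,\tau_i)$ and $g = \sum_j d_j A(\cdot,\tau_j)$, bilinearity of both inner products together with the definition $K(\tau,\rho) = \langle A(\cdot,\tau), A(\cdot,\rho)\rangle_{\mathcal{H}}$ and the defining formula for $\langle\cdot,\cdot\rangle_{\mathcal{N}_K(\mathcal{T})}$ on finite sums yield
\[
\langle f,g\rangle_{\mathcal{H}} = \sum_{i,j} c_i d_j K(\tau_i,\tau_j) = \langle \Lambda_0 f, \Lambda_0 g\rangle_{\mathcal{N}_K(\mathcal{T})}.
\]
Because $V$ is dense in $\mathcal{N}_A(\Omega)$ by definition, the bounded (in fact, isometric) operator $\Lambda_0$ admits a unique continuous extension $\Lambda : \mathcal{N}_A(\Omega) \to \mathcal{N}_K(\mathcal{T})$ which preserves inner products. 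To see that this extension still satisfies $(\Lambda f)(\tau) = \lambda_\tau(f)$ for every $f \in \mathcal{N}_A(\Omega)$, I would choose a sequence $h_n \in V$ converging to $f$ in $\mathcal{H}$; continuity of $\lambda_\tau$ on $\mathcal{H}$ gives $\lambda_\tau(h_n)\to\lambda_\tau(f)$, while the reproducing property of $K$ in $\mathcal{N}_K(\mathcal{T})$ together with the isometry gives $(\Lambda h_n)(\tau) = \langle \Lambda h_n, K(\cdot,\tau)\rangle_{\mathcal{N}_K(\mathcal{T})} \to \langle \Lambda f, K(\cdot,\tau)\rangle_{\mathcal{N}_K(\mathcal{T})} = (\Lambda f)(\tau)$.

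Finally, injectivity of $\Lambda$ is automatic from the isometry, and surjectivity follows since $\Lambda(V) = \mathrm{span}\{K(\cdot,\tau):\tau\in\mathcal{T}\}$ is dense in $\mathcal{N}_K(\mathcal{T})$ and the image of an isometry defined on a complete space is closed. The step I expect to require the most care is the well-definedness argument for $\Lambda_0$ on $V$, since a priori the representation $h = \sum c_i A(\cdot,\tau_i)$ is not unique when the averaging kernels are linearly dependent; the identity $(\Lambda_0 h)(\rho) = \langle h, A(\cdot,\rho)\rangle_{\mathcal{H}}$ circumvents this issue cleanly and is the conceptual heart of the result.
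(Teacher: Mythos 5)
Your proposal is correct and follows essentially the same route as the paper: establish that $\Lambda$ sends $A(\cdot,\tau)$ to $K(\cdot,\tau)$, verify the isometry on the finite span, extend by continuity, deduce bijectivity from the isometry plus density of the image, and finally confirm that the extension still acts as $f\mapsto(\tau\mapsto\lambda_\tau(f))$ using continuity of the functionals and the reproducing property. The only cosmetic difference is the order of presentation (you define the map on generators and check consistency with the functional formula, whereas the paper starts from the functional formula and derives the action on generators), which changes nothing of substance.
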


\begin{proof}
We show that $\Lambda$ is an isometric isomorphism between the spaces $\text{span}\{A(\cdot, \tau) \ : \ \tau \in \mathcal{T}\}$ and $\text{span} \{K(\cdot, \tau) \ : \ \tau \in \mathcal{T}\}$, and then extend the result to closures, i.e., to the relative native spaces. First, observe that the linearity of the functionals $\{ \lambda_{\tau} \ : \ \tau \in \mathcal{T}\}$ implies the linearity of $\Lambda$. Since
\begin{equation*}
    \Lambda(A(\cdot,\tau))(\rho)=\lambda_{\rho}(A(\cdot,\tau)) = \langle A(\cdot,\tau), A(\cdot,\rho)\rangle_{\mathcal{H}} = K(\tau,\rho),
\end{equation*}
we see that $\Lambda(A(\cdot,\tau))=K(\cdot,\tau)$. That $\Lambda$ is an isometric bijection now follows by simple observations:
\begin{itemize}
    \item[(i)] Isometry: the operator $\Lambda$ is an isometry since
\begin{equation*}
\begin{split}
     & \left\langle \sum_{i=1}^n \alpha_i A(\cdot,\tau_i), \sum_{j=1}^m \beta_j A(\cdot,\rho_j) \right\rangle_{\mathcal{H}}  = \sum_{i=1}^n  \sum_{j=1}^m \alpha_i \beta_j \langle A(\cdot,\tau_i), A(\cdot,\rho_j) \rangle_{\mathcal{H}} =\\
     = & \sum_{i=1}^n  \sum_{j=1}^m \alpha_i \beta_j K(\tau_i, \rho_j) = \left\langle \sum_{i=1}^n \alpha_i K(\cdot,\tau_i), \sum_{j=1}^m \beta_j K(\cdot,\rho_j) \right\rangle_{\mathcal{N}_K(\mathcal{T})}.
\end{split}
\end{equation*}
    \item[(ii)] Injectivity: if $\Lambda\left(\sum_{i=1}^n \alpha_i A(\cdot, \tau_i)\right)=0$, then, by linearity, $\sum_{i=1}^n \alpha_i K(\cdot, \tau_i)=0$. Therefore, by the isometry of the mapping $\Lambda$, we have
    \begin{equation*}
        0=\left\|\sum_{i=1}^n \alpha_i K(\cdot, \tau_i)\right\|^2_{\mathcal{N}_{K}(\mathcal{T})} = \left\|\sum_{i=1}^n \alpha_i A(\cdot, \tau_i)\right\|^2_{\mathcal{H}},
    \end{equation*}
    and this is only possible if $\sum_{i=1}^n \alpha_i A(\cdot, \tau_i)=0$. 
    \item[(iii)] Surjectivity: if $g \in \text{span} \{K(\cdot, \tau) \ : \ \tau \in \mathcal{T}\}$, then 
    \begin{equation*}
        g= \sum_{i=1}^n \alpha_i K(\cdot, \tau_i) = \sum_{i=1}^n \alpha_i \Lambda(A(\cdot, \tau_i)) = \Lambda\left(\sum_{i=1}^n \alpha_i A(\cdot, \tau_i)\right).
    \end{equation*}
\end{itemize}

By continuity, we can extend the operator $\Lambda$ to the closure of $\text{span}\{A(\cdot, \tau) : \tau \in \mathcal{T}\}$. Let $\widetilde{\Lambda}$ be the corresponding extension: if $f \in \overline{\text{span}\{A(\cdot, \tau)  \ : \ \tau \in \mathcal{T}\}}$ is in the closure, then there exists $\{f_n\}_{n \in \mathbb{N}} \subseteq \text{span}\{A(\cdot, \tau) : \tau \in \mathcal{T}\}$ such that $\lim_{n\to\infty} f_n=f$ in $\mathcal{H}$, providing the element
\begin{equation*}
    \widetilde{\Lambda}(f) := \lim_{n\to\infty} \Lambda(f_n).
\end{equation*}
Since $\mathcal{N}_K(\mathcal{T})$ is a Hilbert space, this construction is well-defined: as $\{f_n\}_{n \in \mathbb{N}}$ converges and
\begin{equation*}
    \| \Lambda(f_n) - \Lambda(f_m) \|_{\mathcal{N}_K(\mathcal{T})} = \| \Lambda(f_n -f_m) \|_{\mathcal{N}_K(\mathcal{T})} = \|f_n - f_m\|_{\mathcal{H}},
\end{equation*}
 the sequence $\{\Lambda(f_n)\}_{n \in \mathbb{N}}$ is a Cauchy sequence and therefore admits a limit in $\mathcal{N}_K(\mathcal{T})$. Moreover, the limit $\widetilde{\Lambda} f$ is independent of the specific sequence under consideration: if $\{f_n\}_{n \in \mathbb{N}}$ and $\{g_n\}_{n \in \mathbb{N}}$ are sequences in $\text{span}\{A(\cdot, \tau) : \tau \in \mathcal{T}\}$ such that $\lim_{n\to\infty} f_n=f$ and $\lim_{n\to\infty} g_n=f$, then
\begin{equation*}
\begin{split}
    & \left\|\lim_{n\to\infty} \Lambda(f_n) - \lim_{n\to\infty} \Lambda(g_n) \right\|_{\mathcal{N}_{R}(\mathcal{T})} \\
    & \quad \leq \left\|\lim_{n\to\infty} \Lambda(f_n) -\Lambda(f_m)\right\|_{\mathcal{N}_{R}(\mathcal{T})} +\|\Lambda(f_m)-\Lambda(g_m)\|_{\mathcal{N}_{R}(\mathcal{T})} + \left\|\Lambda(g_m) - \lim_{n\to\infty} \Lambda(g_n)\right\|_{\mathcal{N}_{R}(\mathcal{T})} \\
    & \quad =  \left\|\lim_{n\to\infty} \Lambda(f_n) -\Lambda(f_m)\right\|_{\mathcal{N}_{R}(\mathcal{T})} +\|f_m-g_m\|_{\mathcal{H}} + \left\|\Lambda(g_m) - \lim_{n\to\infty} \Lambda(g_n)\right\|_{\mathcal{N}_{R}(\mathcal{T})},
\end{split}
\end{equation*}
which tends to zero as $m \to \infty$.
\\
\\
We next show that also the extension $\widetilde{\Lambda}: \mathcal{N}_A(\Omega) \to \mathcal{N}_K(\mathcal{T})$ is an isometry and a bijection: 
\begin{itemize}
    \item[(i)] Isometry: for $f,g \in \mathcal{N}_A(\Omega)$ and sequences $\{f_n\}_{n \in \mathbb{N}}$, $\{g_n\}_{n \in \mathbb{N}}$  in $\text{span} \{A(\cdot,\tau): \tau \in \mathcal{T}\}$ such that $\lim_{n\to\infty} f_n =f$ and $\lim_{n\to\infty} g_n =g$, we have
    \begin{equation*}
        \langle \widetilde{\Lambda}(f),\widetilde{\Lambda}(g) \rangle_{\mathcal{N}_K(\mathcal{T})} = \lim_{n\to\infty} \langle \Lambda(f_n),\Lambda(g_n) \rangle_{\mathcal{N}_K(\mathcal{T})} = \lim_{n\to\infty} \langle f_n,g_n \rangle_{\mathcal{H}} = \langle f,g \rangle_{\mathcal{H}}.
    \end{equation*}
    since $\lim_{n\to\infty} \Lambda(f_n) =\widetilde{\Lambda}(f)$ and $\lim_{n\to\infty} \Lambda(f_n) =\widetilde{\Lambda}(f)$, and the inner product is continuous. 
    \item[(ii)] Injectivity: let $\widetilde{\Lambda}(f)=0$ for some $f \in \mathcal{N}_A(\Omega)$. Then, there exists $\{f_n\}_{n \in \mathbb{N}} \subseteq \text{span}\{A(\cdot,\tau) : \tau \in \mathcal{T}\}$ such that $\lim_{n\to\infty} f_n=f$, and
    \begin{equation*}
        \lim_{n\to\infty} \Lambda(f_n)=0 \,  \Rightarrow \lim_{n\to\infty} \|f_n\|_{\mathcal{H}} = \lim_{n\to\infty} \|\Lambda(f_n)\|_{\mathcal{N}_{R}(\mathcal{T})}=0 \, \Rightarrow \|f\|_{\mathcal{H}}=0 \,\Rightarrow \, f = 0.
    \end{equation*}
    \item[(iii)] Surjectivity: let $g \in \mathcal{N}_K(\mathcal{T})=\overline{\text{span}\{K(\cdot, \tau): \tau \in \mathcal{T}\}}$. Then, as $\Lambda(\text{span}\{A(\cdot, \tau): \tau \in \mathcal{T}\})= \text{span}\{K(\cdot, \tau): \tau \in \mathcal{T}\}$, we have a sequence $\{f_n\}_{n \in \mathbb{N}} \subseteq \text{span}\{A(\cdot, \tau): \tau \in \mathcal{T}\}$ such that $g=\lim_{n\to\infty}\Lambda(f_n)$. Now, since
    \begin{equation*}
        \|f_n-f_m\|_\mathcal{H}=\|\Lambda(f_n-f_m)\|_{\mathcal{N}_K(\mathcal{T})}=\|\Lambda(f_n)-\Lambda(f_m)\|_{\mathcal{N}_K(\mathcal{T})},
    \end{equation*}
    the sequence $\{f_n\}_{n \in \mathbb{N}}$ is a Cauchy sequence. As $\mathcal{N}_A(\Omega)$ is a Hilbert space, there exists $f \in \mathcal{N}_A(\Omega)$ such that $\lim_{n\to\infty}f_n=f$, and we can conclude that $g=\widetilde{\Lambda}(f)$.
\end{itemize}
The last step is to show that $\Lambda = \widetilde{\Lambda}$. We recall that for reproducing kernel Hilbert spaces the convergence in norm implies the pointwise convergence. If $f \in \mathcal{N}_A(\Omega)$ and $\{f_n\}_{n \in \mathbb{N}} \subseteq \text{span}\{A(\cdot,\tau): \tau \in \mathcal{T}\}$ with $\lim_{n\to\infty
}f_n=f$, then $\widetilde{\Lambda}(f)=\lim_{n\to\infty} \Lambda(f_n)$. Therefore, we get
\begin{equation*}
    \widetilde{\Lambda}(f)(\tau)= \lim_{n\to\infty} \Lambda(f_n)(\tau)= \lim_{n\to\infty} \lambda_{\tau}(f_n)= \lambda_{\tau}(f) = \Lambda(f)(\tau) \quad \text{for all} \ \tau \in \mathcal{T}.
\end{equation*}
This concludes the proof.
\end{proof}

\section{Construction and examples of AKHS} \label{sect:exampleAKHS}

In this section we provide several explicit examples of averaging kernel Hilbert spaces. In particular, we show that spaces which do not admit a reproducing kernel may nevertheless admit an averaging kernel. A particular focus is given to the construction of AKHS's from given RKHS's and on uniform AKHS's in which the averaging domains are shifts of a single domain.  

\subsection{Square integrable functions} 
As a first example, we consider the Hilbert space $\mathcal{H} = L_2(\Omega)$ 
equipped with the usual inner product $\langle f, g \rangle_{\mathcal{H}} = \int_{\Omega} f(x) g(x) \mathrm{d} x$. This Hilbert space does not have a reproducing kernel, but we can build averaging kernels for $L_2(\Omega)$. For a given set $\{\omega_{\tau} \ : \ \tau \in \mathcal{T}\}$ of measureable subdomains in $\Omega$ the indicator functions
\[ A(x,\tau) = \frac{1}{|\omega_{\tau}|}\chi_{\omega_{\tau}}(x),\]
define an averaging kernel and a respective AKHS inside $L_2(\Omega)$. The associated reproducing kernel $K$ on $\mathcal{T} \times \mathcal{T}$ is given as
\[ K(\rho, \tau) = \frac{1}{|\omega_{\tau}| |\omega_{\rho}|} |\omega_{\rho} \cap \omega_{\tau}|. \]

\subsection{Averaging kernels generated in reproducing kernel Hilbert spaces} \label{sec:AKHSbyRKHS}

As an important construction strategy for averaging kernels, we consider as initial Hilbert space $\mathcal{H}$ a RKHS $\mathcal{N}_{\Phi}(\Omega)$ generated by a symmetric positive definite reproducing kernel $\Phi \in C(\Omega \times \Omega)$ on a domain $\Omega \subseteq \mathbb{R}^d$. This strategy will allow us to obtain the averaging kernels for a large class of Hilbert spaces. Since RKHS's are typically linked to smoother function spaces, also the AKHS's build on these spaces will have a respectively larger smoothness. 

Consider a collection $\{\omega_{\tau} \ : \ \tau \in \mathcal{T}\}$ of subdomains in $\Omega$. As we assume that the reproducing kernel $\Phi$ is continuous, and since the point evaluation functionals in $\mathcal{N}_{\Phi}(\Omega)$ are continuous, by the positive definiteness of $ \Phi$ we immediately get, for any $f \in \mathcal{N}_{\Phi}(\Omega)$, the bound
\[ \left| \frac{1}{|\omega_\tau|}\int_{\omega_{\tau}} f(x) \mathrm{d} x \right| \leq \frac{1}{|\omega_\tau|}\int_{\omega_{\tau}} |\langle f, \Phi(\cdot, x) \rangle_{\mathcal{N}_{\Phi}(\Omega)}| \mathrm{d} x
\leq \frac{\| f \|_{\mathcal{N}_{\Phi}(\Omega)}}{|\omega_\tau|}\int_{\omega_{\tau}} \sqrt{\Phi(x, x)} \mathrm{d} x \leq \|\Phi\|_{\infty}^{\frac{1}{2}} \| f \|_{\mathcal{N}_{\Phi}(\Omega)}. \]
This implies that all averaging functionals $\lambda_{\tau}$ in $\mathcal{N}_{\Phi}(\Omega)$ are well-defined and continuous. We can therefore introduce an averaging kernel $A(x,\tau)$, and get 
\[ \langle f, A(\cdot, \tau) \rangle_{\mathcal{N}_{\Phi}(\Omega)} = \frac{1}{|\omega_\tau|}\int_{\omega_{\tau}} f(x) \mathrm{d} x = \frac{1}{|\omega_\tau|}\int_{\omega_{\tau}} \langle f, \Phi(\cdot, x) \rangle_{\mathcal{N}_{\Phi}(\Omega)} \mathrm{d} x = \left\langle f, \frac{1}{|\omega_\tau|}\int_{\omega_{\tau}} \Phi(\cdot, x) \mathrm{d} x \right\rangle_{\mathcal{N}_{\Phi}(\Omega)} \]
for all $f \in \mathcal{N}_{\Phi}(\Omega)$. Therefore, the averaging kernel has the form
\begin{equation} \label{eq:averagingkernel}
A(x,\tau) = \frac{1}{|\omega_\tau|}\int_{\omega_{\tau}} \Phi(x, y) \mathrm{d} y,
\end{equation} 
and it can be calculated in terms of mean values of the reproducing kernel $\Phi$. Furthermore, as a consequence of Fubini's theorem, the reproducing kernel $K(\rho,\tau)$ can also be formulated as
\begin{equation} \label{eq:reproducingkernel}
K(\rho,\tau) = \frac{1}{|\omega_\rho|} \int_{\omega_{\rho}} A(x,\tau) \de x = \frac{1}{|\omega_\rho|} \frac{1}{|\omega_\tau|} \int_{\omega_{\rho}} \int_{\omega_{\tau}} \Phi(x, y) \, \mathrm{d} y \mathrm{d} x.
\end{equation}

\subsection{Uniform averaging kernels based on translates of a single domain}
As a further more structured example of an AKHS, we consider the spaces $L_2(\mathbb{R}^d)$ with the averaging functionals given by the translates of a single subset $\omega \subseteq \mathbb{R}^d$. For further simplicity, we suppose that $\omega $ is a symmetric set centered at the origin, i.e., with $x \in \omega$ also $-x \in \omega$. 
Indexed by the set $\mathcal{T} = \mathbb{R}^d$, we then consider the subdomains $\{\omega_y = y + \omega \ : \ y \in \mathbb{R}^d\}$. Based on our assumptions, the averaging operations can now be formulated as
\begin{equation} \label{eq:convolutionindicatorfunction}
  \lambda_y(f) = \frac{1}{|\omega|} \int_{y + \omega} f(y) \mathrm{d} y =  \left( f \ast \frac{1}{|\omega|} \chi_{\omega} \right) (y).  
\end{equation}
In particular, the averaging functionals correspond to the point evaluation of the convolution of the function $f$ with the normalized indicator function $\frac{1}{|\omega|} \chi_{\omega}$. Several averaging kernels can be constructed explicitly in this framework.

\subsubsection{B-splines in the univariate case} 
As a first simple example in the univariate case, we start with the Hilbert space $L_2(\mathbb{R})$ and the domain $\omega = [-a/2,a/2]$ with length $a > 0$. To obtain the averaging operation in \eqref{eq:convolutionindicatorfunction}, the corresponding averaging kernel in $L_2(\mathbb{R})$ is given by the indicator function
\begin{equation} \label{eq:indicatorkernel1D}
A_1(x,y) = \frac{1}{a}\chi_{[y-a/2,y+a/2]}(x) = \left\{ \begin{array}{ll} 1/a & \text{if $|x-y| \leq a/2$} \\ 0 & \text{if $|x-y| > a/2$} \end{array} \right. .    
\end{equation}
Moreover, as respective associated reproducing kernel we get
\[K_1(x,y) = \frac{1}{a}\left( 1 - \frac{|x-y|}{a} \right)_+, \]
where $(x-y)_+ = \max(0,x-y)$ is known as positive part, or RELU function in the machine learning community. The kernels $A_1(x,y)$ and $K_1(x,y)$ can be written in terms of the central B-splines $M_1(x)$ and $M_2(x)$ as
\[A_1(x,y) = \frac{1}{a} M_1\left(\frac{x-y}{a}\right), \quad K_1(x,y) = \frac{1}{a} M_2\left(\frac{x-y}{a}\right).\] 

We can extend this using the $n$-th central B-spline $M_n(x)$. It is generally defined as the $(n-1)$-fold convolution of the indicator function $M_1(x) = \chi_{[-1/2,1/2]}(x)$, such that
\[
M_n(x) = \underbrace{(M_1 \ast M_1 \ast \cdots \ast M_1)}_{\text{($n-1$ convolutions)}} (x),
\]
which has the explicit representations \citep[see][]{butzer1988}
\[
M_n(x) = \frac{1}{(n-1)!}  \sum_{k=0}^{n} (-1)^k \binom{n}{k} \left(x +\frac{n}{2} - k\right)^{n-1}_+
\]
and
\begin{equation} \label{eq:Bsplines3}
M_n(x) = 
\begin{cases}
\displaystyle \frac{1}{(n-1)!}  \sum_{k=0}^{\lfloor n/2 - |x| \rfloor} (-1)^k \binom{n}{k} \left(\frac{n}{2} - |x| - k\right)^{n-1}, & x \in [-\frac{n}{2}, \frac{n}{2}], \\
0, & \text{otherwise}.
\end{cases}
\end{equation}
Here, $ \lfloor x \rfloor $ denotes the integer part of $ x $. 
By iteratively applying the convolution with the indicator function $\frac{1}{|\omega|} \chi_{\omega}$ to the kernel $K_1(x,y)$, we can 
generate further AKHS's with the averaging kernels
\[A_n(x,y) = \frac{1}{a} M_{2n-1}\left(\frac{x-y}{a} \right), \quad n \in \mathbb{N}.\] 
The corresponding associate reproducing kernels $K_n(x,y)$ are then given as 
\[K_n(x,y) = \frac{1}{a} M_{2n}\left(\frac{x-y}{a}\right), \quad n \in \mathbb{N}.\]
The positive definiteness of the reproducing kernels $ K_n (x,y) $ can be established directly. In fact, the cosine transform applied to \eqref{eq:Bsplines3} yields
\begin{equation} \label{eq:Bsplines4}
M_n(x) = \frac{1}{\pi} \int_0^\infty \left(\frac{\sin y/2}{y/2}\right)^n \cos(yx) \, \mathrm{d} y,
\end{equation}
and because of this, the Fourier transform of $M_{2n}(x)$ is non-negative and zero only for $x \in 2 \pi \mathbb{Z} \setminus \{0\}$. The reproducing kernels $K_n(x,y)$ are therefore positive definite and generate a native space $\mathcal{N}_{K_n}(\mathbb{R})$ that is a subspace of the Sobolev space $H^{2n+1}(\mathbb{R})$, as the B-splines $M_{2n}$ are elements of $H^{2n+1}(\mathbb{R})$. We will return on the role of the Fourier transform in the context of AKHS's in Section \ref{sec:Fourier}.

\begin{table}[]
    \centering
\begin{adjustbox}{width=1\textwidth}
\begin{tabular}{|c|c|c|}
\hline
$n$ & $A_n(x,y)$ & $K_n(x,y)$  \\
\hline 
1 & $\frac{1}{a}\chi_{[-a/2,a/2]}(x-y)$ & $\frac{1}{a}\left( 1 - \frac{|x-y|}{a} \right)_+$ \\[2mm]
2 & $\left\{ \begin{array}{ll}
     \frac{1}{a} \left(\frac{3}{2} - \frac{|x-y|^2}{a^2} \right) &  \text{if }\frac{|x-y|}{a} \leq \frac12 \\
    \frac{1}{2 a} \left( \frac{3}{2} - \frac{|x-y|}{a}\right)^2 & \text{if }\frac12 < \frac{|x-y|}{a} \leq \frac{3}{2}  \\
     0 & \text{otherwise}
\end{array}\right.$ & $\left\{ \begin{array}{ll}
     \frac{1}{a} \left(\frac{2}{3} - \frac{|x-y|^2}{a^2} + \frac{1}{2}\frac{|x-y|^3}{a^3} \right) &  \text{if }\frac{|x-y|}{a} \leq 1 \\
    \frac{1}{6 a} \left( 2 - \frac{|x-y|}{a}\right)^3 & \text{if }1 < \frac{|x-y|}{a} \leq 2  \\
     0 & \text{otherwise}
\end{array}\right.$ \\
\hline
\end{tabular}
\end{adjustbox}
    \caption{Simple examples of B-spline averaging and reproducing kernels}
    \label{tab:Bsplines}
\end{table}

\subsubsection{Radial uniform averaging kernels for $d = 1$}

A uniform averaging kernel $A: \mathbb{R}^d \times \mathbb{R}^d \to \mathbb{R} $ is called \emph{radial} if 
\[A(x,y) = \alpha(\|x-y\|_2)\]
for some univariate function $\alpha: [0, \infty) \to \mathbb{R}$. 
For a uniform AKHS with a radial kernel $A(x,y)$ and a (radial) ball $\omega$ as an underlying averaging domain, also the associated reproducing kernel $K(x,y)$ is radial and can be written as
\[ K(x,y) = \kappa(\|x-y\|_2)\]
with a univariate function $\kappa: [0, \infty) \to \mathbb{R}$. 

In the univariate setting $d = 1$, and using the domain $\omega = [-a/2,a/2]$ with interval length $a$, the respective averaging kernel can be written as $A(x,y) = \alpha(x-y)$ by extending $\alpha$ evenly to the entire real axis. The same holds true for the reproducing kernel $K(x,y) = \kappa(x-y)$, that can now be expressed in terms of the even function
\[ \kappa(x) = \left(\alpha \ast \frac{1}{a} \chi_{[-a/2,a/2]}\right)(x). \]
As the reproducing kernel $K$ is positive semi-definite, the function $\kappa$ itself is a positive semi-definite function. If the underlying space $\mathcal{H}$ is a native space $\mathcal{N}_{\Phi}(\Omega)$ based on a univariate radial kernel $\Phi$, such that
\[\Phi(x,y) = \phi(x-y), \]
the uniform averaging kernel $A$ and the reproducing kernel $K$ can be calculated in terms of $\phi$ as
\[ \alpha(x) = \phi \ast \frac{1}{a}\chi_{[-a/2,a/2]}(x), \quad \kappa(x) = \phi \ast \frac{1}{a} \chi_{[-a/2,a/2]} \ast \frac{1}{a} \chi_{[-a/2,a/2]}(x).\]
In particular, if the function $\phi$ is positive definite, also $\kappa$ is positive definite. We encountered already the central B-splines as examples of such kernels. If the first two anti-derivatives of $\phi$ exist, the functions $\alpha$ and $\kappa$ can be stated explicitly by the fundamental theorem of calculus.

\begin{figure}[htbp]
    \centering
    \includegraphics[width=0.49\linewidth]{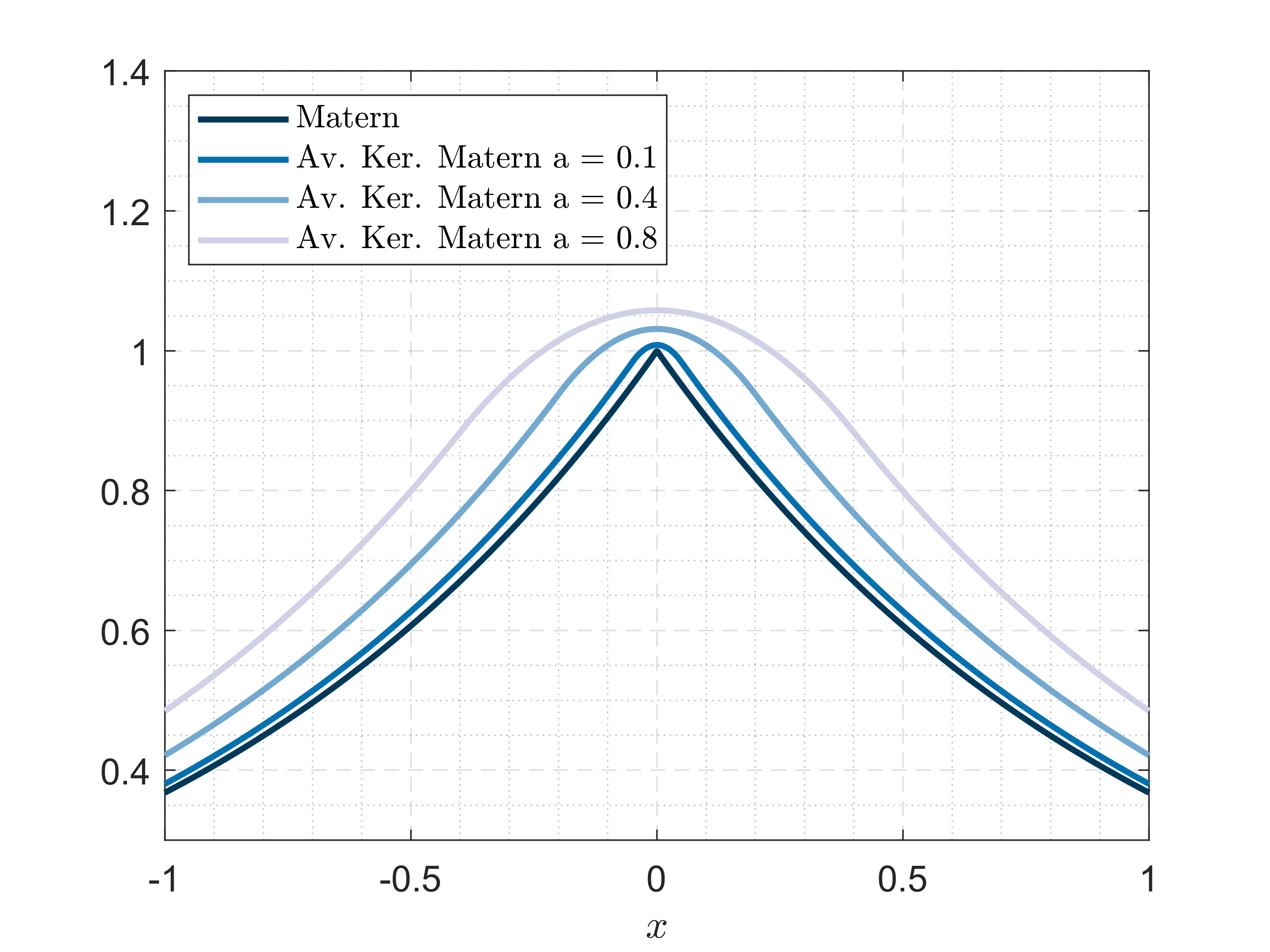}
    \includegraphics[width=0.49\linewidth]{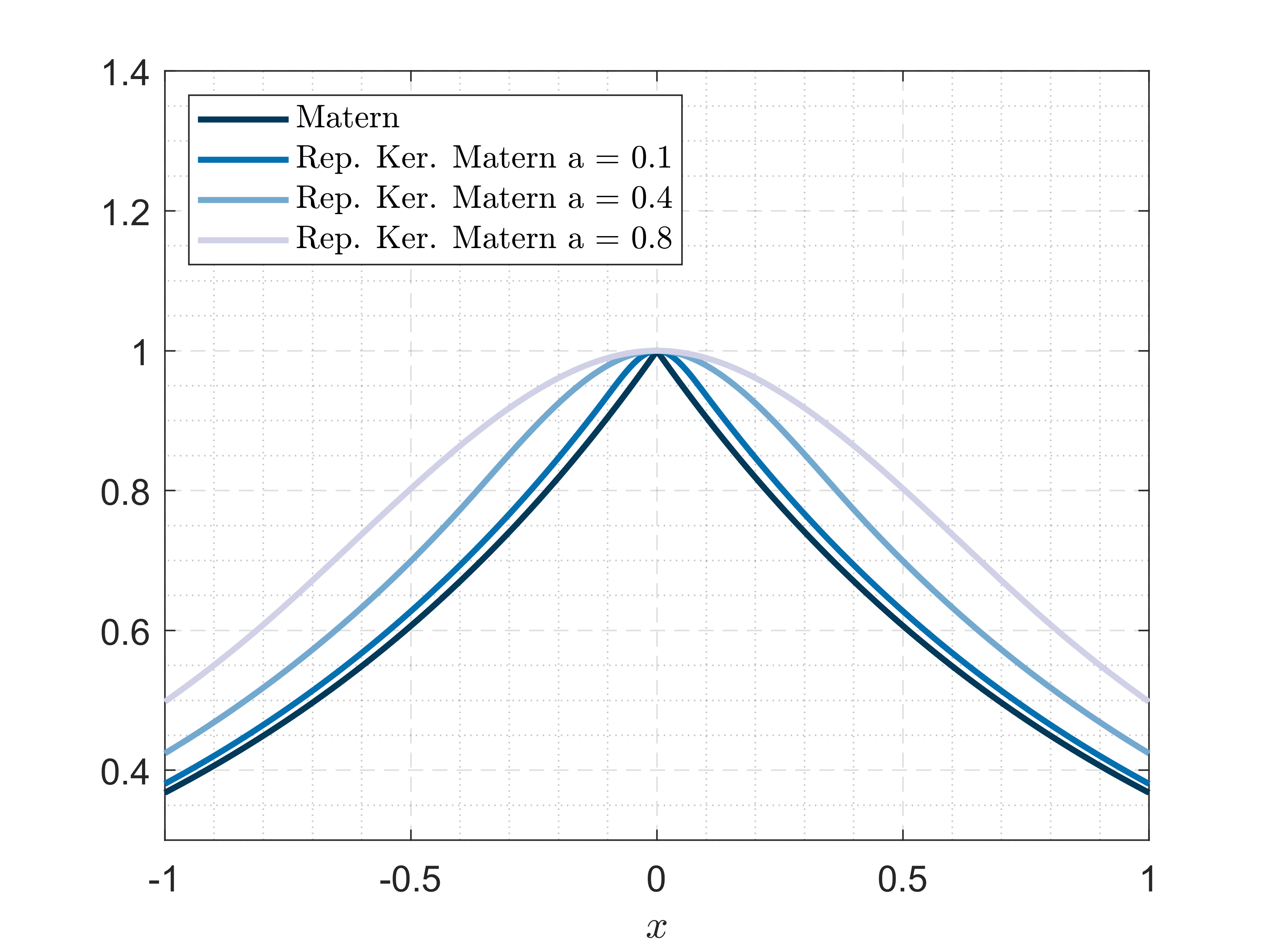}
    \caption{The averaging and reproducing kernel based on the Mat\'ern function. Left: the Mat\'ern function $\phi(x) = e^{-\lambda |x|}$ compared to the averaging kernels $\alpha(x)/\kappa(0)$ given in Example \ref{ex:RUAK} (i) for three interval lengths $a \in \{0.1, 0.4, 0.8\}$. Right: the respective normalized reproducing kernels $\kappa(x)/\kappa(0)$.}
    \label{fig:maternkernel}
\end{figure}

\begin{lemma} \label{lem:ftc}
Let $I_\phi(x)$ and $I_\phi^2(x)$ denote the first and second anti-derivative of the univariate function $\phi$ that generates the native space $\mathcal{N}_{\Phi}(\Omega)$. Then, the radial functions $\alpha$ and $\kappa$ of the averaging kernel $A$ and the reproducing kernel $K$ can be written as
\[\alpha(x) = \frac{1}{a} \left(I_{\phi}(x+a/2) - I_{\phi}(x-a/2)\right), \quad \kappa(x) = \frac{1}{a^2} \left(I_{\phi}^2(x+a) + I_{\phi}^2(x-a) - 2 I_{\phi}^2(x) \right),\]
i.e., $\alpha$ and $\kappa$ are first- and second-order symmetric finite differences of the first and second anti-derivative of $\phi$ with respect to the step size $a$, respectively. 
\end{lemma}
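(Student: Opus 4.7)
The plan is to derive both formulas by direct computation from the convolution definitions, invoking the fundamental theorem of calculus twice. Everything reduces to integrating $\phi$ (resp.\ $I_\phi$) over intervals whose endpoints can be read off after a linear change of variable. Since the two formulas are essentially the first and second Riemann sum limits of the convolution with $\frac{1}{a}\chi_{[-a/2,a/2]}$, no deeper analytical tool is needed; the only real question is bookkeeping of signs and endpoints.

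First I would treat $\alpha$. Writing out the convolution explicitly,
\[
\alpha(x) \;=\; \bigl(\phi \ast \tfrac{1}{a}\chi_{[-a/2,a/2]}\bigr)(x) \;=\; \frac{1}{a}\int_{-a/2}^{a/2} \phi(x-t)\, \mathrm{d} t,
\]
and making the substitution $u = x-t$ (with $\mathrm{d} u = -\mathrm{d} t$) converts this into $\frac{1}{a}\int_{x-a/2}^{x+a/2}\phi(u)\,\mathrm{d} u$. A single application of the fundamental theorem of calculus with $I_\phi$ as an anti-derivative of $\phi$ gives the first claim.

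Next I would treat $\kappa$. Since convolution is associative, I can write $\kappa = \alpha \ast \tfrac{1}{a}\chi_{[-a/2,a/2]}$, so that
\[
\kappa(x) \;=\; \frac{1}{a}\int_{-a/2}^{a/2} \alpha(x-s)\,\mathrm{d} s \;=\; \frac{1}{a^2}\int_{-a/2}^{a/2} \bigl(I_\phi(x-s+a/2) - I_\phi(x-s-a/2)\bigr)\,\mathrm{d} s,
\]
using the formula already obtained for $\alpha$. In each of the two integrands I perform the substitution $u = x - s \pm a/2$: the first becomes $\int_{x}^{x+a} I_\phi(u)\,\mathrm{d} u = I_\phi^2(x+a) - I_\phi^2(x)$, while the second becomes $\int_{x-a}^{x} I_\phi(u)\,\mathrm{d} u = I_\phi^2(x) - I_\phi^2(x-a)$. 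Subtracting the two yields exactly $I_\phi^2(x+a) + I_\phi^2(x-a) - 2 I_\phi^2(x)$, establishing the second claim.

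The main obstacle is really only notational cleanliness — making sure that after the substitution the integration limits end up as stated, and that the minus sign from the change of variables cancels against the reversal of orientation. Once that is done, the interpretation as first- and second-order symmetric finite differences of $I_\phi$ and $I_\phi^2$ with step $a$ is immediate from reading the formulas. The assumption in the statement that the first two anti-derivatives of $\phi$ exist is precisely what makes all these manipulations well-defined, so no further regularity or integrability hypotheses need to be imposed.
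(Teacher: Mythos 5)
Your computation is correct and follows exactly the route the paper intends: the paper states this lemma without a written proof, remarking only that it follows from the convolution formulas $\alpha = \phi \ast \tfrac{1}{a}\chi_{[-a/2,a/2]}$ and $\kappa = \alpha \ast \tfrac{1}{a}\chi_{[-a/2,a/2]}$ by the fundamental theorem of calculus, which is precisely what you carry out. Your bookkeeping of the substitutions and integration limits checks out, so this is a complete and faithful elaboration of the paper's argument.
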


\noindent Lemma \ref{lem:ftc} gives a convenient way to compute averaging kernels explicitly, as the next example shows.

\begin{example} \label{ex:RUAK}
\begin{itemize}
    \item[$(i)$] For the Mat\'ern function $\phi(x) = e^{-\lambda |x|}$, we get
    \[\alpha(x) = \left\{ \begin{array}{ll}
\frac{1}{\lambda a} (2 - e^{\lambda (|x|-a/2)} - e^{-\lambda (|x|+a/2)}) & \text{if} \; |x| \leq a/2, \\
\frac{1}{\lambda a} (e^{-\lambda (|x|-a/2)} - e^{-\lambda (|x|+a/2)}) & \text{if} \; |x| > a/2,
\end{array} \right. \]
and 
    \[\kappa(x) = \left\{ \begin{array}{ll}
\frac{1}{\lambda^2 a^2} (2 \lambda (a - |x|) + e^{\lambda (|x|-a)} + e^{-\lambda (|x|+a)} - 2 e^{-\lambda (|x|}) & \text{if} \; |x| \leq a, \\
\frac{1}{\lambda^2 a^2} (e^{-\lambda (|x|-a)} + e^{-\lambda (|x|+a) } - 2 e^{-\lambda (|x|)}) & \text{if} \; |x| > a.
\end{array} \right. \]
\item[$(ii)$] In case of the inverse quadratic $\phi(x) = \frac{1}{1+ (\lambda x)^2}$ we get 
\begin{align*}
\alpha(x) &= \frac{1}{\lambda a} \left( \arctan (\lambda (x +a/2)) - \arctan (\lambda (x - a/2)) \right) \\ &= \frac{1}{\lambda a}\arctan\left(\frac{\lambda a}{1 + \lambda^2 \left( x^2 - \frac{a^2}{4} \right)}\right),
\end{align*}
where the last identity holds if $\lambda^2 (\frac{a}{2})^2 < 1$. For the function $\kappa$ of the radial kernel $K$ we get
\begin{align*}
\kappa(x) &= \frac{1}{\lambda^2 a^2} \left( \lambda(x+a)\arctan (\lambda (x +a)) + \lambda(x-a) \arctan (\lambda (x - a)) - 2 \lambda x \arctan (\lambda x ) \right)  \\
& \quad - \frac{1}{\lambda^2 a^2} \left( \frac12 \ln (1 + \lambda (x +a)) + \frac12 \ln (1 + \lambda (x - a )) - \ln (1 + \lambda x) \right) \\
&= \frac{1}{\lambda a}   \arctan\left(\frac{2\lambda a}{1 + \lambda^2 (x^2 - a^2)}\right) -
 \frac{x}{\lambda a^2} \left( \arctan\left(\frac{2\lambda^3 a^2 x}{(1 + \lambda^2 x^2)^2 + \lambda^2 a^2 (1 - \lambda^2 x^2)}\right)  \right) 
 \\ & \quad - \frac{1}{2\lambda^2 a^2} \ln \left( 1 - \frac{\lambda^2 a^2}{(1 + \lambda x)^2} \right),
\end{align*}
where the last identity is true if $\lambda^2 a^2 < 1$. 
\item[$(iii)$] For the inverse multiquadric $\phi(x) = \frac{1}{\sqrt{1+ (\lambda x)^2}}$ we get 
\[\alpha(x) = \frac{1}{\lambda a} \left( \sinh^{-1} (\lambda (x +a/2)) - \sinh^{-1} (\lambda (x - a/2)) \right)\]

and 
\begin{align*}
\kappa(x) &= \frac{1}{\lambda^2 a^2} \left( \lambda(x+a)\sinh^{-1} (\lambda (x +a)) + \lambda(x-a) \sinh^{-1} (\lambda (x - a)) - 2 \lambda x \sinh^{-1} (\lambda x ) \right)  \\
& \quad - \frac{1}{\lambda^2 a^2} \left( \sqrt{1 + \lambda^2 (x + a)^2} + \sqrt{1 + \lambda^2 (x - a )^2} - 2\sqrt{1 + \lambda^2 x^2} \right).
\end{align*}
\item[$(iv)$] Starting with the anti-derivatives $I_{\phi}^2$ and $I_{\phi}$, Lemma \ref{lem:ftc} can be applied also in a slightly different way to obtain averaging and reproducing kernels explicitly. As an example, it is well-known that $I_{\phi}^2(x) = - \frac{1}{2 \lambda }e^{- \lambda x^2}$ with $\lambda > 0$ is the first and second anti-derivative of the functions
\[I_{\phi}(x) = x e^{- {\lambda} {x^2}}, \quad\text{and} \quad \phi(x) = \left( 1 - 2 { \lambda} x^2 \right) e^{- { \lambda}{x^2}}.\]
The function $\phi(x)$ is positive definite and known as Mexican hat wavelet. Using Lemma \ref{lem:ftc}, we immediately get the averaging kernel for $\phi$ as
\[\alpha(x) = \left( \frac{1}{2} + \frac{x}{a} \right)  e^{- {\lambda} {(x+\frac{a}{2})^2}} + \left( \frac{1}{2} - \frac{x}{a} \right) e^{- {\lambda} {(x - \frac{a}{2})^2}}\]
and the respective reproducing kernel as
\[ \kappa(x) = \frac{1}{2 \lambda a^2} \left(2 e^{- {\lambda}{x^2}} - e^{- {\lambda} (x+a)^2} - e^{- {\lambda}{(x-a)^2}} \right).\]
\end{itemize}
\end{example}

\subsubsection{Radial uniform averaging kernels in higher dimensions $d > 1$}

Already in the univariate case $ d = 1 $, Lemma \ref{lem:ftc} indicates that not all averaging kernels can be expressed through elementary functions even when the kernel $\Phi$ itself can. A typical example is the Gaussian kernel $ e^{-\lambda | x|^2} $, whose anti-derivative involves the error function.

For $ d > 1 $ already the simple functions considered in Example \ref{ex:RUAK} do not lead to explicit formulas for $A$ and $K$. Using special functions, we can nevertheless derive the following result.

\begin{lemma} Let $\Phi: \mathbb{R}^d \times \mathbb{R}^d \to \mathbb{R}$ be a radial kernel based on the univariate function $\phi$. Then, the uniform radial averaging kernel $A$ based on the ball $\omega = \{x \in \mathbb{R}^d \, : \, \|x\|_2 \leq a\}$ with radius $a$ and centered at the origin is given as $A(x,y) = \alpha(\|x-y\|_2)$ with
\begin{equation}
\label{eq:uniformgeneral}
\alpha(r)
= \left\{ \begin{array}{ll}\frac{d}{2 a^d}
\displaystyle \int_{r-a}^{\,r+a} \textstyle
\phi(\rho)\,\rho^{d-1}\,
I_{1- \left(\frac{\rho^2 + r^2 - a^2}{2\rho r}\right)^2}\left(\frac{d-1}{2},\,\frac{1}{2}\right)
\,\mathrm{d}\rho & \text{if $r \geq a $,} \\
\frac{d}{a^d} \displaystyle \int_{0}^{a-r} \textstyle
\phi(\rho)\,\rho^{d-1}\,\mathrm{d}\rho \\[2mm] 
 \quad + \frac{d}{ a^d}
\displaystyle\int_{a-r}^{\sqrt{(a-r)(a+r)}} \textstyle
\phi(\rho)\,\rho^{d-1}\left(1 - \frac{1}{2}
I_{1- \left(\frac{\rho^2 + r^2 - a^2}{2\rho r}\right)^2}\left(\frac{d-1}{2},\,\frac{1}{2}\right) \right)
\,\mathrm{d}\rho \\
 \quad + \frac{d}{2 a^d}
\displaystyle \int_{\sqrt{(a-r)(a+r)}}^{a+r} \textstyle 
\phi(\rho)\,\rho^{d-1}\,
I_{1- \left(\frac{\rho^2 + r^2 - a^2}{2\rho r}\right)^2}\left(\frac{d-1}{2},\,\frac{1}{2}\right) 
\,\mathrm{d}\rho & \text{if $0 \leq r < a $,}
\end{array}\right.
\end{equation}
where $I_z(a,b)$ denotes the regularized incomplete beta function given as
\[I_z(a,b) = \frac{\Gamma(a+b)}{\Gamma(a) \Gamma(b)}\int_0^z t^{a-1}(1-t)^{b-1} \mathrm{d} t,\]
and $\Gamma$ the well-known Gamma function. For the reproducing kernel $K$ we get analogous formulas by replacing in formula \eqref{eq:uniformgeneral} the function $\phi$ by the function $\alpha$. 
\end{lemma}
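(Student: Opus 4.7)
The plan is to reduce to a one-dimensional radial integral and then slice by concentric spheres. By the translation invariance of the averaging operation and the rotational invariance of $\phi$, I may assume without loss of generality that $y=0$ and $x = r e_1$ for some fixed unit vector $e_1$, where $r=\|x-y\|_2$. Then, from \eqref{eq:averagingkernel},
\[
\alpha(r) \;=\; \frac{1}{|\omega|}\int_{B_a(0)} \phi(\|r e_1 - u\|_2)\,\mathrm{d}u,
\qquad |\omega|=\frac{\pi^{d/2}}{\Gamma(d/2+1)}\,a^d,
\]
and the prefactor in the claim will arise from the identity $\sigma_{d-1}/|\omega|=d/a^d$, where $\sigma_{d-1}=2\pi^{d/2}/\Gamma(d/2)$.

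First I would slice $B_a(0)$ by the spheres $S_\rho(re_1)=\{u\in\mathbb{R}^d:\|u-re_1\|_2=\rho\}$ centered at $re_1$. This gives
\[
\alpha(r)=\frac{1}{|\omega|}\int_0^\infty \phi(\rho)\,\mathcal{H}^{d-1}\!\bigl(S_\rho(re_1)\cap B_a(0)\bigr)\,\mathrm{d}\rho.
\]
Using the law of cosines applied to the triangle with vertices $0$, $re_1$ and a point of the intersection circle, the intersection of $S_\rho(re_1)$ with $B_a(0)$ is the set of polar angles $\theta\in[0,\theta_0]$ measured at $re_1$ from the direction $-e_1$, with
\[
\cos\theta_0 \;=\; \frac{r^2+\rho^2-a^2}{2r\rho},\qquad
\sin^2\theta_0 \;=\; 1-\left(\frac{\rho^2+r^2-a^2}{2\rho r}\right)^2.
\]
A direct case distinction then fixes the integration ranges: for $r\ge a$ the intersection is nonempty only for $\rho\in[r-a,r+a]$ and one always has $\cos\theta_0\ge 0$, so $\theta_0\in[0,\pi/2]$; for $r<a$ the sphere lies entirely inside $B_a(0)$ when $\rho\in[0,a-r]$, the cap has $\theta_0\in(\pi/2,\pi]$ when $\rho\in[a-r,\sqrt{a^2-r^2}]$, and the cap has $\theta_0\in[0,\pi/2]$ when $\rho\in[\sqrt{a^2-r^2},a+r]$. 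The three breakpoints $r-a$, $a-r$, $\sqrt{(a-r)(a+r)}$ and $a+r$ appearing in the lemma are precisely the ones determined here.

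Next I would compute the surface measure of a cap by polar coordinates on $S_\rho(re_1)$, yielding
\[
\mathcal{H}^{d-1}\!\bigl(S_\rho(re_1)\cap B_a(0)\bigr)=\rho^{d-1}\sigma_{d-2}\int_0^{\theta_0}\sin^{d-2}\theta\,\mathrm{d}\theta,
\]
and then substitute $t=\sin^2\theta$. Together with the identity $\sigma_{d-2}\,B\!\left(\tfrac{d-1}{2},\tfrac{1}{2}\right)=\sigma_{d-1}$ this identifies the angular integral with $\tfrac{1}{2}\sigma_{d-1}I_{\sin^2\theta_0}\!\left(\tfrac{d-1}{2},\tfrac{1}{2}\right)$ whenever $\theta_0\le\pi/2$. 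When $\theta_0\in(\pi/2,\pi]$, I would split the angular integral at $\pi/2$ and use the reflection $\theta\mapsto\pi-\theta$ on the remaining part; since $\sin^2(\pi-\theta)=\sin^2\theta$, this produces the complement form $\sigma_{d-1}\bigl(1-\tfrac{1}{2}I_{\sin^2\theta_0}(\tfrac{d-1}{2},\tfrac{1}{2})\bigr)$. In the degenerate range $\rho\in[0,a-r]$, the cap becomes the whole sphere and the area is $\rho^{d-1}\sigma_{d-1}$.

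Finally I would reassemble these three contributions, divide by $|\omega|$ and absorb $\sigma_{d-1}/|\omega|=d/a^d$ into the prefactors $d/a^d$ and $d/(2a^d)$ shown in \eqref{eq:uniformgeneral}. The claim for the reproducing kernel $K$ follows without extra work: by \eqref{eq:reproducingkernel}, $K(x,y)$ is the average of the radial function $A(x,\cdot)=\alpha(\|x-\cdot\|_2)$ over the ball $\omega_y$, hence it has exactly the same structure as $A$ with the integrand $\phi$ replaced by $\alpha$, and the same derivation applies verbatim. I expect the principal obstacle to be the careful handling of the case $\theta_0>\pi/2$ — i.e., the complement identity producing the middle integral in \eqref{eq:uniformgeneral} — and the correct tracking of the three subcases for $r<a$; once these are in place, the remainder is a routine beta-function substitution together with the volume ratio identity.
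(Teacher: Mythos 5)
Your argument is correct and follows essentially the same route as the paper's proof: both decompose the ball integral into spherical slices on which $\phi$ is constant, identify the intersection with the averaging ball as a spherical cap with $\cos\theta_0=\frac{\rho^2+r^2-a^2}{2\rho r}$, carry out the same case analysis with the same breakpoints, and use the ratio $|\mathbb{S}^{d-1}|/|\omega|=d/a^d$. The only (cosmetic) differences are that you center the slicing spheres at the evaluation point rather than at the origin, and that you derive the cap-area formula $\tfrac12\,|\mathbb{S}^{d-1}|\,I_{\sin^2\theta_0}\!\left(\tfrac{d-1}{2},\tfrac12\right)$ via the substitution $t=\sin^2\theta$ instead of citing it, which makes the argument self-contained.
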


\begin{proof}
By the construction of the averaging kernel $A$ in \eqref{eq:averagingkernel} and the fact that $\chi_{\omega}$ and $\phi(\|x\|_2)$ are radial, we see that also the averaging kernel $A$ is radial and get
\begin{equation*} 
\alpha(r) = \frac{1}{|\omega|}\int_{\|x - z\|_2 \leq a} \phi(\|x\|_2)\,\mathrm{d}x,    
\end{equation*} 
for any $z \in \mathbb{R}^d$ with $\|z\|_2 = r \geq 0$. Using spherical coordinates centered at the origin, we can express $x \in \mathbb{R}^d$ as $x = \rho \nu$ with $\rho = \|x\|_2$ and $\nu = x/\|x\|_2$. The corresponding volume element is $\mathrm{d}x = \rho^{d-1} \mathrm{d} \nu \mathrm{d} \rho$, transforming the integral above into
\begin{equation} \label{eq:cap1}\alpha(r) = \frac{1}{|\omega|} \int_0^\infty \phi(\rho) \rho^{d-1}  \left( \int_{\|\rho \nu - z\|_2 \leq a} \,\mathrm{d} \nu \right) \, \mathrm{d} \rho. 
\end{equation}
The set 
\begin{equation} \label{eq:sphericalcap}
\{ \nu \in \mathbb{S}^{d-1} \ : \ \|\rho \nu - z\|_2 \leq a  \} = \left\{ \nu \in \mathbb{S}^{d-1} \ : \ \frac{\rho^2 + r^2 - a^2}{2 \rho r} \leq \nu^ T \frac{z}{\|z\|_2} \right\}    
\end{equation}
is either empty, the unit sphere $\mathbb{S}^{d-1}$, or a spherical cap of the unit sphere $\mathbb{S}^{d-1}$ with center $z/r$ and polar angle $\theta \in [0,\pi]$ defined by $\cos \theta = \frac{\rho^2 + r^2 - a^2}{2 \rho r}$ (if the latter term lies in $[-1,1]$). 
In particular, according to the definition and the non-negativity $\rho \geq 0$, the set \eqref{eq:sphericalcap} is non-empty if and only if $\max\{0,r-a\} \leq \rho \leq r + a$. We have to distinguish two cases:

\vspace{2mm}
\noindent $(i)$ $r \geq a$: in this case, we have $0 \leq \frac{\rho^2 + r^2 - a^2}{2 \rho r} \leq 1$ for $r-a \leq \rho \leq r + a$. This means that the polar angle $\theta \in [0,\frac{\pi}{2}]$, and according to \citep{Li2011}, the area of the spherical cap is given as
\[ \frac{1}{2} \, |\mathbb{S}^{d-1}|\,I_{1-\cos^2 \theta}\left(\frac{d-1}{2},\,\frac{1}{2}\right).
\]
This area corresponds to the inner integral in \eqref{eq:cap1}. With $|\omega|$ being the volume of the unit ball with radius $a$, we therefore get 
\[\alpha(r) = \frac{|\mathbb{S}^{d-1}|}{2 |\omega|} \int_{r-a}^{r+a} \phi(\rho) \rho^{d-1}  I_{1-\cos^2 \theta}\left(\frac{d-1}{2},\,\frac{1}{2}\right) \, \mathrm{d} \rho \]
with 
\begin{equation} \label{eq:ratiosphere}
 \frac{|\mathbb{S}^{d-1}|}{|\omega|} = \frac{2 \pi^{\frac{d}{2}}}{\Gamma(\frac{d}{2})} \frac{\Gamma(\frac{d}{2}+1)}{\pi^{\frac{d}{2} } a^d} = \frac{d}{a^d}.   
\end{equation}
\noindent $(ii)$ $r < a$: in this case, we have to subdivide the integral in three parts. For $\rho \in [0,a-r]$, the set \eqref{eq:sphericalcap} corresponds to the entire unit sphere $\mathbb{S}^{d-1}$, providing in combination with \eqref{eq:ratiosphere} the first integral term in the composed formula \eqref{eq:uniformgeneral}. \\ For $\rho \in [\sqrt{(a-r)(a+r)},a+r]$, the polar angle $\theta$ of the spherical cap is in $[0,\frac{\pi}{2}]$. For this range of $\rho$, we therefore get the same integrand as in case $(i)$, and, thus, the last integral term in \eqref{eq:uniformgeneral}. \\
For the remaining interval $\rho \in [a-r, \sqrt{(a-r)(a+r)}]$, the polar angle of the spherical cap is in $[\frac{\pi}{2}, \pi]$. By symmetry, the area of the spherical cap is in this case given as
\[ |\mathbb{S}^{d-1}| \left(1 - \frac{1}{2}\,I_{1-\cos^2 \theta}\left(\frac{d-1}{2},\,\frac{1}{2}\right) \right),
\]
giving the second integral term in the formula \eqref{eq:uniformgeneral}.
\end{proof}

\begin{example} \label{ex:nDkernels}
Whereas it is typically not possible to get explicit expressions for the integrals in \eqref{eq:uniformgeneral}, at least the value 
\[\alpha(0) = \frac{d}{a^d} \displaystyle \int_{0}^{a} \textstyle
\phi(\rho)\,\rho^{d-1}\,\mathrm{d}\rho\]
for $r = 0$ can be calculated directly in many cases: 
    \begin{itemize}
        \item[(i)] For the multivariate Mat\'ern kernel $\Phi(x,y) = e^{-\lambda \Vert x - y \Vert_2}$, $\lambda > 0$, we get 
         $$ \alpha(0) = \frac{d}{a^d} \displaystyle \int_{0}^{a} \textstyle
e^{-\lambda \rho}\,\rho^{d-1}\,\mathrm{d}\rho  = d \frac{\gamma (d,\lambda a)}{(\lambda a) ^d } ,$$ where $\gamma $ denotes the lower incomplete gamma function.
         \item[(ii)] For the multivariate Gauss kernel $\Phi(x,y) = e^{-\lambda \Vert x - y \Vert_2^2 }$, $\lambda > 0$, we get that 
    $$\alpha(0) = \frac{d}{a^d} \displaystyle \int_{0}^{a} \textstyle
e^{-\lambda \rho^2}\,\rho^{d-1}\,\mathrm{d}\rho  = \frac{d}{2} \frac{\gamma (\frac{d}{2},\lambda a^2)}{(\lambda a^2) ^\frac{d}{2} }. $$
    \end{itemize}
    \end{example}

In a general scenario, quadrature rules are necessary to approximate the involved integrals. We will discuss this in Section \ref{sect:implementationstrategy}.

\section{Characterization of AKHS in terms of the Fourier transform} \label{sec:Fourier}

RKHS's associated with positive semi-definite kernels defined on the entire Euclidean space $\mathbb{R}^d$ can be characterized in terms of their (generalized) Fourier transform. Similar characterizations are also possible for AKHS's with radial uniform averaging kernels. Since the Fourier transform of discontinuous functions is, in general, not integrable, we will technically employ the Fourier-Plancherel transform to characterize these spaces. We begin with the univariate case.

\begin{theorem} \label{thm:charAKHS}
An even non-vanishing function $\alpha \in L_{1}(\mathbb{R}) \cap L_2(\mathbb{R})$ generates a uniform averaging kernel $A(x,y)$ based on the domain $\omega = [-a/2,a/2]$ if and only if $\alpha$ is the inverse Fourier-Plancherel transform of an even function $\hat{\alpha} \in C_0(\mathbb{R}) \cap L_2(\mathbb{R})$ with the property that
\begin{equation} \label{eq:charAKHS}
\hat{\alpha}(s) \left\{ \begin{array}{ll}
\geq 0 & \text{if} \; |s| \in [ (2k) \frac{2\pi}{a}, (2k + 1)\frac{2\pi}{a}], \quad k \in \mathbb{N}_0, \\
\leq 0 & \text{if} \; |s| \in [(2 k + 1) \frac{2\pi}{a}, (2 k + 2)  \frac{2\pi}{a}], \quad k \in \mathbb{N}_0
\end{array} \right. .    
\end{equation}
\end{theorem}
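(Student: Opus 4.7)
My plan is to reduce the characterization to positive semi-definiteness of the associated radial reproducing kernel $\kappa$, translate that condition to the Fourier side via Bochner's theorem, and read off the sign structure of the resulting sinc factor.

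Given an even candidate $\alpha \in L_1(\mathbb{R}) \cap L_2(\mathbb{R})$, the associated radial kernel $K(x,y) = \kappa(x-y)$ arising from \eqref{eq:reproducingkernel} combined with the convolution representation \eqref{eq:convolutionindicatorfunction} takes the form
$$\kappa = \alpha \ast \frac{1}{a}\chi_{[-a/2,a/2]}.$$
Since $\alpha \in L_1 \cap L_2$ and the indicator is in $L_1 \cap L_\infty$, Young's inequality yields $\kappa \in L_1(\mathbb{R}) \cap L_2(\mathbb{R}) \cap C_b(\mathbb{R})$. By Theorem \ref{thm:AKHSmain}, the existence of a non-trivial AKHS in which $\alpha(\cdot - y)$ serves as the Riesz representative of the averaging functional $\lambda_y$ is equivalent to $\kappa$ being symmetric and positive semi-definite on $\mathbb{R}$.

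Next I will Fourier-transform. With $\psi_a(s) := \widehat{\frac{1}{a}\chi_{[-a/2,a/2]}}(s) = \frac{\sin(as/2)}{as/2}$, the convolution theorem gives $\hat{\kappa}(s) = \hat{\alpha}(s)\,\psi_a(s)$, where $\hat{\alpha}$ is real-valued and even (since $\alpha$ is) and lies in $C_0(\mathbb{R}) \cap L_2(\mathbb{R})$. A direct inspection of the zeros of $\sin(as/2)$ shows that $\psi_a$ is non-negative exactly for $|s| \in [(2k)\frac{2\pi}{a}, (2k+1)\frac{2\pi}{a}]$ and non-positive exactly for $|s| \in [(2k+1)\frac{2\pi}{a}, (2k+2)\frac{2\pi}{a}]$, $k \in \mathbb{N}_0$. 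By Bochner's theorem applied to the continuous integrable function $\kappa$, positive semi-definiteness of $\kappa$ is equivalent to $\hat{\kappa}(s) \geq 0$ for almost every $s$. Substituting the product representation, this is equivalent to $\hat{\alpha}(s)\,\psi_a(s) \geq 0$ a.e., which by the sign pattern of $\psi_a$ is precisely the condition \eqref{eq:charAKHS}. Conversely, if the sign condition holds then $\hat{\kappa} \geq 0$ and Bochner supplies positive semi-definiteness of $\kappa$; the RKHS $\mathcal{N}_K(\mathbb{R})$ is then well-defined and, through the isometry of Theorem \ref{thm:AKHSmain}, gives the corresponding AKHS.

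The main delicacy is matching the regularity of $\kappa$ with the appropriate form of Bochner's theorem: because $\kappa$ is the convolution of an $L_1 \cap L_2$ function with a bounded compactly supported indicator, $\hat{\kappa}$ is an honest $C_0 \cap L_2$ function, so the a.e. sign condition captures positive semi-definiteness unambiguously without invoking the measure form. A minor secondary point is that at the sinc zeros $|s| \in \frac{2\pi}{a}\mathbb{N}$ the continuity of $\hat{\alpha}$ combined with the two-sided sign requirement in \eqref{eq:charAKHS} automatically forces $\hat{\alpha}$ to vanish, consistent with both implications; and non-vanishing of $\alpha$ guarantees that $\hat{\alpha}$ is not identically zero so that the resulting AKHS is non-trivial.
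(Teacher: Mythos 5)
The Fourier side of your argument (convolution theorem, sign pattern of the sinc factor, Bochner) is exactly the paper's computation, and your forward direction is fine. The genuine gap is in the converse, where you write that ``the RKHS $\mathcal{N}_K(\mathbb{R})$ is then well-defined and, through the isometry of Theorem \ref{thm:AKHSmain}, gives the corresponding AKHS.'' Theorem \ref{thm:AKHSmain} does not run in that direction: it \emph{presupposes} a Hilbert space $\mathcal{H}$ on which the averaging functionals are continuous and represented by $A(\cdot,\tau)$, and only then produces the isometry onto $\mathcal{N}_K$. In the converse of Theorem \ref{thm:charAKHS} no such $\mathcal{H}$ is given; note that in the obvious ambient space $L_2(\mathbb{R})$ the Riesz representative of $\lambda_y$ is $\tfrac{1}{a}\chi_{[y-a/2,y+a/2]}$, not $\alpha(\cdot-y)$, so the inner product making the translates of $\alpha$ into the representers must be \emph{constructed}. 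The paper does this explicitly: it shows the convolution operator $\Lambda\alpha=\alpha\ast\tfrac{1}{a}\chi_{[-a/2,a/2]}$ is injective on $L_2(\mathbb{R})$, pulls back the $\mathcal{N}_K(\mathbb{R})$ norm to $\mathrm{span}\{\alpha(\cdot-y)\}$ via
\[
\Bigl\|\sum_i\beta_i\,\alpha(\cdot-y_i)\Bigr\|_{\mathcal{N}_A(\mathbb{R})}:=\Bigl\|\sum_i\beta_i\,\kappa(\cdot-y_i)\Bigr\|_{\mathcal{N}_K(\mathbb{R})},
\]
completes, and only then verifies $\langle f,A(\cdot,y)\rangle_{\mathcal{N}_A(\mathbb{R})}=\Lambda f(y)$, i.e.\ that the averaging property actually holds in the space so built. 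Without this construction (and the injectivity of $\Lambda$, which is what makes the pulled-back norm well defined and the AKHS unique), the equivalence you assert in your second paragraph is unsupported.

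A secondary, fixable point: to apply the inversion form of Bochner's theorem in the converse you need $\hat{\kappa}\in L_1(\mathbb{R})$, which does not follow from $\hat{\kappa}\in C_0\cap L_2$ alone; the paper obtains it from the Cauchy--Schwarz bound $\|\hat{\kappa}\|_1\le\|\hat{\alpha}\|_2\,\bigl\|\tfrac{2}{as}\sin(\tfrac{as}{2})\bigr\|_2<\infty$. You should include this estimate rather than the heuristic remark about ``the measure form.''
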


\begin{proof}
For an even function $\alpha \in L_{1}(\mathbb{R}) \cap L_2(\mathbb{R})$ its Fourier transform 
\[\hat{\alpha}(s) = \frac{1}{\sqrt{2 \pi}} \int_{\mathbb{R}} \alpha(x) e^{-i s x} \mathrm{d} x\]
is continuous and square-integrable. The associated reproducing kernel $\kappa(x) = (\alpha \ast \frac{1}{a} \chi_{[-a/2,a/2]})(x)$ is also an element of $L_{1}(\mathbb{R}) \cap L_2(\mathbb{R})$ and, according to the convolution theorem, its Fourier transform is given as
\[\hat{\kappa}(s) = \frac{\sqrt{2 \pi}}{a} \hat{\alpha}(s) \widehat{\chi}_{[-a/2,a/2]}(s) =  \hat{\alpha}(s) \frac{2}{a s} \sin\left( \frac{a s}{2} \right).\]
As $\kappa$ gives rise to a positive semi-definite kernel, the Fourier coefficients $\hat{\kappa} \geq 0$ are non-negative. Because of the periodicity of the sine function, the Fourier coefficients $\hat{\alpha}(s)$ therefore have to satisfy the oscillating property \eqref{eq:charAKHS}.

On the other hand, if the Fourier coefficients of $\alpha \in L_{1}(\mathbb{R}) \cap L_2(\mathbb{R})$ are oscillating as in \eqref{eq:charAKHS}, the Fourier coefficients $\hat{\kappa}(s) = \hat{\alpha}(s) \frac{2}{a s} \sin\left( \frac{a s}{2} \right)$ of the function $\kappa = \alpha \ast \frac{1}{a} \chi_{[-a/2,a/2]}$ are non-negative. Further,
\[ \|\hat{\kappa}\|_1 \leq \|\hat{\alpha}\|_2 \left\|\frac{2}{a s} \sin\left( \frac{a s}{2} \right)\right\|_2 =  \frac{\sqrt{2 \pi}}{\sqrt{a}}\|\alpha\|_2.\]
Therefore, by Bochner's theorem, the function $\kappa$ is continuous and positive semi-definite and gives rise to a positive semi-definite kernel $K(x,y) = \kappa(x-y)$. By the Moore-Aronszajn theorem, there exists a unique RKHS $\mathcal{N}_{K}(\mathbb{R})$ with reproducing kernel $K$. 

The convolution operator $\Lambda \alpha = \alpha \ast \frac{1}{a} \chi_{[-a/2,a/2]}$ is an injective linear operator on $L_2(\mathbb{R})$. Therefore, $\Lambda$ defines a bijective mapping between the subspaces $\mathrm{span} \{\alpha(\cdot - y) \ : \ y \in \mathbb{R}\}$ and $\mathrm{span} \{\kappa(\cdot - y) \ : \ y \in \mathbb{R}\} \subseteq \mathcal{N}_{K}(\mathbb{R})$. By inducing the norm of $\mathcal{N}_{K}(\mathbb{R})$ on $\mathrm{span} \{\alpha(\cdot - y) \ : \ y \in \mathbb{R}\}$ such that
\[\left\|\sum_{i=1}^n \beta_i \alpha(\cdot- y_i)\right\|^2_{\mathcal{N}_{A}(\mathbb{R})} := \left\|\sum_{i=1}^n \beta_i \kappa(\cdot- y_i)\right\|^2_{\mathcal{N}_K(\mathbb{R})},\]
the space $\mathrm{span} \{\alpha(\cdot - y) \ : \ y \in \mathbb{R}\}$ turns into a pre-Hilbert space which is isometric to the pre-Hilbert space $\mathrm{span} \{\kappa(\cdot - y) \ : \ y \in \mathbb{R}\}$ with the norm of $\mathcal{N}_K(\mathbb{R})$. Going over to the closure of the two spaces, we then get an isometric isomorphism $\Lambda$ between two Hilbert spaces $\mathcal{N}_A(\mathbb{R})$ and $\mathcal{N}_K(\mathbb{R})$. Even more, the kernel $A(x,y) = \alpha(x-y)$ is an averaging kernel since
\[ \langle f, A(\cdot, y)\rangle_{\mathcal{N}_A(\mathbb{R})} = \langle \Lambda f, K(\cdot, y)\rangle_{\mathcal{N}_K(\mathbb{R})} = \Lambda f(y) \]
holds true for the RKHS $\mathcal{N}_K(\mathbb{R})$. This makes $\mathcal{N}_A(\mathbb{R})$ to an AKHS with averaging kernel $A$. The fact that the RKHS $\mathcal{N}_K(\mathbb{R})$ is unique and the convolution operator $\Lambda$ injective, makes also the AKHS $\mathcal{N}_A(\mathbb{R})$ unique. 
\end{proof}

More generally, for radial uniform kernels $A(x,y) = \alpha(\|x-y\|_2)$ in $\mathbb{R}^d$ we get: 

\begin{theorem} \label{thm:charAKHSmulti}
A univariate non-vanishing function $\alpha \in L_{1}([0,\infty),r^{d-1}) \cap L_2([0,\infty),r^{d-1})$ generates a multivariate uniform radial averaging kernel $A(x,y)$ in $\mathbb{R}^d$ based on the ball $\omega = \{x \in \mathbb{R}^d \ : \ \|x\|_2 \leq a\}$ if and only if $\alpha$ is the inverse Hankel transform of order $\frac{d}{2} - 1$ of a function $\hat{\alpha} \in C_0([0,\infty)) \cap L_2([0,\infty),r^{d-1})$ with the property that
\begin{equation} \label{eq:charAKHSradial}
\hat{\alpha}(s) \left\{ \begin{array}{ll}
\geq 0 & \text{if} \; s \in [ 0, j_{\frac{d}{2},1}(a)], \\
\leq 0 & \text{if} \; s \in [ j_{\frac{d}{2},2k-1}(a), j_{\frac{d}{2},2k}(a)], \quad k \in \mathbb{N}, \\
\geq 0 & \text{if} \; s \in [ j_{\frac{d}{2},2k}(a), j_{\frac{d}{2},2k+1}(a)], \quad k \in \mathbb{N}, 

\end{array} \right.    
\end{equation}
where $0 < j_{\frac{d}{2},1}(a) < j_{\frac{d}{2},2}(a) < \cdots$, denote the zeros of the scaled Bessel function $J_{\frac{d}{2}}(a s)$ of order $\frac{d}{2}$. 
\end{theorem}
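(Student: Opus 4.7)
The proof strategy is to mirror the argument of Theorem \ref{thm:charAKHS} in the radial multivariate setting by replacing the one-dimensional Fourier transform with its radial counterpart. The crucial observation is that for a radial function $f(x) = f_0(\|x\|_2)$ on $\mathbb{R}^d$, the Euclidean Fourier transform is itself radial and reduces, up to a fixed normalization, to the Hankel transform of order $\tfrac{d}{2}-1$ applied to $f_0$. Since both $\alpha$ and the normalized indicator $\tfrac{1}{|\omega|}\chi_{\omega}$ are radial, the entire argument can be carried out in the radial variable, with Bochner's theorem and the convolution theorem playing the same roles as in the one-dimensional case.

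The computational core is the Fourier transform of the normalized indicator of the ball $\omega = \{x \in \mathbb{R}^d : \|x\|_2 \le a\}$. Using the standard identity $\int_0^a J_{d/2-1}(sr)\,r^{d/2}\,\mathrm{d}r = \tfrac{a^{d/2}}{s} J_{d/2}(as)$, this transform takes the form
\[ \widehat{\tfrac{1}{|\omega|}\chi_{\omega}}(\xi) \;=\; c_d \,\frac{J_{d/2}(a\|\xi\|_2)}{(a\|\xi\|_2)^{d/2}}, \]
with a positive normalizing constant $c_d$ depending only on $d$. Because $J_{d/2}(t) \sim \text{const}\cdot t^{d/2}$ as $t \to 0^+$ and $J_{d/2}$ has simple zeros, this transform is strictly positive on $[0,j_{d/2,1}(a))$ and alternates sign on each consecutive interval delimited by the zeros $j_{d/2,k}(a)$. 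This is exactly the pattern encoded in \eqref{eq:charAKHSradial}.

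With the convolution theorem we obtain
\[ \hat{\kappa}(\xi) \;=\; C_d\, \hat{\alpha}(\xi)\, \frac{J_{d/2}(a\|\xi\|_2)}{(a\|\xi\|_2)^{d/2}} \]
for a positive constant $C_d$. For necessity, if $\alpha$ generates a uniform radial averaging kernel, then $\kappa$ is continuous and positive semi-definite, so $\hat{\kappa}\ge 0$ by Bochner's theorem; dividing pointwise by the Bessel factor on each interval between consecutive zeros forces $\hat{\alpha}$ to obey \eqref{eq:charAKHSradial}. For sufficiency, the sign pattern \eqref{eq:charAKHSradial} ensures $\hat{\kappa}\ge 0$ pointwise, Bochner's theorem combined with Moore-Aronszajn yields the RKHS $\mathcal{N}_K(\mathbb{R}^d)$ with reproducing kernel $K(x,y)=\kappa(x-y)$, and the radial convolution operator $\Lambda f = f \ast \tfrac{1}{|\omega|}\chi_{\omega}$ is injective on $L_2(\mathbb{R}^d)$ because its Fourier symbol vanishes only on the measure-zero union of spheres of radii $j_{d/2,k}(a)/a$. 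Inducing the norm of $\mathcal{N}_K(\mathbb{R}^d)$ on $\mathrm{span}\{\alpha(\|\cdot -y\|_2) : y \in \mathbb{R}^d\}$ and passing to closures reproduces verbatim the isometric isomorphism argument of Theorem \ref{thm:charAKHS}, giving $\mathcal{N}_A(\mathbb{R}^d)$ as an AKHS with averaging kernel $A$.

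The main obstacle I anticipate is verifying the integrability needed to apply Bochner's theorem. In contrast to the univariate case, the Bessel factor decays only like $s^{-(d+1)/2}$, so its square against the weight $s^{d-1}\,\mathrm{d}s$ fails to be integrable for $d \ge 2$, and one cannot simply repeat the Cauchy-Schwarz bound $\|\hat{\kappa}\|_1 \lesssim \|\hat{\alpha}\|_2$ from the univariate proof. The hypothesis $\hat{\alpha} \in C_0([0,\infty))\cap L_2([0,\infty),r^{d-1})$ must be exploited more carefully: one splits the radial integral into a neighbourhood of the origin, where continuity of $\hat{\alpha}$ and the bound $|J_{d/2}(t)/t^{d/2}| \le C$ give integrability directly, and an exterior region, where Cauchy-Schwarz in $L_2([0,\infty),r^{d-1})$ combined with the sharper asymptotics of $J_{d/2}$ yields the required $L_1(\mathbb{R}^d)$ control on $\hat{\kappa}$.
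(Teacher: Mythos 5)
Your argument is correct and follows essentially the same route as the paper: the Hankel/radial Fourier transform of the ball indicator produces the factor $J_{d/2}(as)/(as)^{d/2}$, the convolution theorem gives $\hat{\kappa}$ as the product of $\hat{\alpha}$ with this factor, Bochner's theorem forces the sign of $\hat{\alpha}$ to match that of $J_{d/2}(as)$, and the converse is the same RKHS construction as in the univariate theorem. The one inaccuracy is your ``main obstacle'': the square of the Bessel factor against the weight is $|J_{d/2}(as)/(as)^{d/2}|^2\,s^{d-1} \sim C\,s^{-2}$ at infinity, which \emph{is} integrable for every $d\ge 1$, so the univariate Cauchy--Schwarz bound $\|\hat{\kappa}\|_{L_1}\lesssim\|\hat{\alpha}\|_{L_2(r^{d-1})}$ carries over directly and the splitting into interior and exterior regions is unnecessary (though harmless).
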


\begin{proof} The proof follows pretty much the lines of the univariate case. For $\alpha \in L_{1}([0,\infty),r^{d-1}) \cap L_2([0,\infty),r^{d-1})$ the Hankel transform 
\[\hat{\alpha}(s) = s^{-\frac{d-2}{2}} \int_{0}^\infty \alpha(r) r^{\frac{d}{2}} J_{\frac{d-2}{2}}(s r) \mathrm{d}r\]
is continuous and in $L_2([0,\infty),r^{d-1})$. The reproducing kernel $K(x,y) = \kappa(\|x-y\|_2)$ is also radial and its generating function $\kappa(\|z\|_2)$ corresponds to the convolution
\[ \kappa(\|z\|_2) = \frac{1}{|\omega|}\int_{\|x - z\|_2 \leq a} \alpha(\|x\|)_2) \mathrm{d} x\]
of $\alpha(\|x\|)_2$ with the characteristic function of the ball $\omega$. For this, by the convolution theorem, the Hankel transform of $\kappa$ can be written as
\begin{equation} \label{eq:hankeltransformkernel}\hat{\kappa}(s) = \frac{(2 \pi )^{\frac{d}{2}}}{|\omega|} \hat{\alpha}(s) \hat{\chi}_\omega(s) = \frac{\Gamma(\frac{d}{2}+1)}{(\pi a s)^{\frac{d}{2}}} J_{\frac{d}{2}}(a s) \hat{\alpha}(s), \quad s \geq 0.
\end{equation}
As for an AKHS the reproducing kernel $K$ has to be positive semi-definite, the Fourier coefficients $\hat{\kappa}(s)$ have to be non-negative, implying that the sign of $\hat{\alpha}(s)$ is identical to the sign of $J_{\frac{d}{2}}(a s)$. This determines the condition in \eqref{eq:charAKHSradial}. Regarding the reverse statement, it follows by the same construction principle upon the RKHS $\mathcal{N}_K(\mathbb{R}^d)$ as shown in the univariate case in Theorem \ref{thm:charAKHS}. 
\end{proof}

\begin{remark} The reproducing kernel $K(x,y) = \kappa(\|x-y\|_2)$ associated to the averaging kernels $A(x,y)$ in Theorem \ref{thm:charAKHS} and Theorem \ref{thm:charAKHSmulti} is in fact positive definite.
Since the Hankel transform $\hat{\alpha}(s)$ is continuous and non-vanishing, also the function $\hat{\kappa}(s)$ in \eqref{eq:hankeltransformkernel} is continuous and non-vanishing. This implies that the support of $\hat{\kappa}$ contains an open subset, and, thus, by \citep[Theorem 6.8]{Wendland_2004} that the kernel $K(x,y)$ is positive definite in $\mathbb{R}^d$. 
\end{remark}

\begin{remark} As outlined in the proof of Theorem \ref{thm:charAKHS} (and in a similar fashion also for the multivariate result in Theorem \ref{thm:charAKHSmulti}), the radial uniform AKHS $\mathcal{N}_A(\mathbb{R}^d)$ is uniquely determined by the generating function $\alpha$ and the fact that the chosen domains are translates of a ball $\omega$ with radius $a$.

This allows to interpret also the space $L_2(\mathbb{R}^d)$ as an AKHS. For this, we can consider the kernel $A(x,y) = \frac{1}{|\omega|} \chi_{\omega}(x-y)$ as a radial uniform averaging kernel inside $L_2(\mathbb{R}^d)$ with respect to the translates of the ball $\omega$. As the translates $\{ A(\cdot,y) \ : \ y \in \mathbb{R}^d \}$ of the normalized indicator function generate a dense subset of $L_2(\mathbb{R}^d)$ (this follows by \citep[Proposition (4.71)]{Folland95} and the fact that $\hat{\chi}_{\omega}$ vanishes only on a set of Lebesgue measure $0$), the respective AKHS corresponds to $L_2(\mathbb{R}^d)$, with the kernel $A(x,y)$ uniquely given in terms of the normalized and translated indicator function. We can observe that although the generated Hilbert space is always $L_2(\mathbb{R}^d)$, the averaging kernel depends on the selected domains. However, as soon as the domains are fixed, at least in the radial setting we have shown that also the averaging kernels are uniquely determined.  
\end{remark}

\section{Solution of histopolation problems} \label{sec:histopolation}

The AKHS setting turns out to be the right theoretical framework to solve histopolation problems with a kernel method.    
To state and solve the problem, we assume that $\mathcal{N}_A(\Omega) \subseteq \mathcal{H}$ is an averaging kernel Hilbert space based on averages created on the subdomains $\{\omega_{\tau} \ : \ \tau \in \mathcal{T}\}$. 
We further assume that we are given the mean values 
\begin{equation} \label{eq:rhs}
\lambda_{\tau_i}(f) = \frac{1}{|\omega_{\tau_i}|}\int_{\omega_{\tau_i}} f(x) \mathrm{d}x 
\end{equation}
of the function $f \in \mathcal{N}_A(\Omega)$ on $n$ subdomains
$\{\omega_{\tau_1}, \ldots, \omega_{\tau_n}\}$ based on the indices $\mathcal{T}_n = \{\tau_1, \ldots, \tau_n\} \subseteq \mathcal{T}$. Our aimed-at approximant $s_f$ of the function $f$ should retain these mean values in the histopolation space
$$ \mathcal{N}_{A}(\Omega,\mathcal{T}_n) = \mathrm{span} \left\{ A(\cdot, \tau_j) \ ; \ j \in \{1, \ldots n\} \right\} \subseteq \mathcal{N}_{A}(\Omega).$$
More precisely, we want the approximant 
\begin{equation} \label{eq:approxAKHS}
s_f(x) = \sum_{j=1}^n c_j A(x,\tau_j) \in \mathcal{N}_A(\Omega,\mathcal{T}_n)
\end{equation}
to satisfy the $n$ histopolation conditions
\[ \lambda_{\tau_i}(s_f) = \frac{1}{|\omega_{\tau_i}|} \int_{\omega_{\tau_i}} s_f(x) \mathrm{d} x = \frac{1}{|\omega_{\tau_i}|}\int_{\omega_{\tau_i}} f(x) \mathrm{d} x = \lambda_{\tau_i}(f), \qquad i \in \{1, \ldots n\}. \]
These $n$ conditions can be formulated in terms of a linear system of equations 
\begin{equation} \label{eq:mainhistopolationsystem}
 \mathbf{K} \, \mathbf{c} = \boldsymbol{\lambda},   
\end{equation}
where the vector $\mathbf{c} = [c_1 \cdots c_n]^\intercal$ contains the expansion coefficients of the histopolant $s_f$, \[ \boldsymbol{\lambda} = [\lambda_{\tau_1}(f) \; \cdots \; \lambda_{\tau_n}(f)]^\intercal\] contains the histopolation data \eqref{eq:rhs} and $\mathbf{K}$ is the $n \times n$ positive semi-definite matrix with the entries 
\begin{equation} \label{eq:histopolationmatrix}
\mathbf{K}_{i,j} = \frac{1}{|\omega_{\tau_i}|} \int_{\omega_{\tau_i}} A(x,\tau_j) dx = K(\tau_i,\tau_j). 
\end{equation}
In an AKHS, the linear independence of the functionals $\{ \lambda_{\tau_1}, \ldots, \lambda_{\tau_n} \}$ is equivalent to the linear independence of the elements $\{ A(\cdot,\tau_1), \ldots, A(\cdot,\tau_n) \}$. 
The averaging property of the AKHS further implies that
\[ 
      \left\| \sum_{i=1}^n \alpha_i A(\cdot,\tau_i) \right\|_{\mathcal{H}}^2 = \left\langle \sum_{i=1}^n \alpha_i A(\cdot,\tau_i), \sum_{j=1}^m \alpha_j A(\cdot,\rho_j) \right\rangle_{\mathcal{H}}   = \sum_{i=1}^n  \sum_{j=1}^m \alpha_i \alpha_j K(\tau_i, \rho_j) ,
\]
implying that the matrix $\mathbf{K}$ is positive definite and, thus, invertible if the functionals $\{ \lambda_{\tau_1}, \ldots, \lambda_{\tau_n} \}$ are linearly independent. The developed theory on averaging kernel Hilbert spaces implies now the following result.

\begin{theorem} \label{thm:histopolationunique}
If $\mathcal{N}_A(\Omega)$ is an averaging kernel Hilbert space with linearly independent averaging functionals $\{ \lambda_{\tau_1}, \ldots, \lambda_{\tau_n} \}$, then the matrix $\mathbf{K}$ is positive definite, and the system \eqref{eq:mainhistopolationsystem} has a unique solution. Further, the histopolant $s_f \in \mathcal{N}_{A}(\mathcal{T}_n)$ in \eqref{eq:approxAKHS} can be calculated explicitly as
\[s_f(x) = \mathbf{a}(x)^\intercal \mathbf{K}^{-1} \boldsymbol{\lambda},\]
where $\mathbf{a}(x)$ denotes the vector
\[\mathbf{a}(x) = [A(x,\tau_1) \; \cdots \; A(x,\tau_n)]^{\intercal}.\]
\end{theorem}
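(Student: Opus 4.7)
The plan is straightforward: the theorem essentially collects and formalizes observations already spelled out in the paragraphs preceding its statement, so the proof reduces to tying these observations together cleanly. I would proceed in three short steps.

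First, I would record the equivalence between linear independence of the functionals $\{\lambda_{\tau_1}, \ldots, \lambda_{\tau_n}\}$ and linear independence of the kernel slices $\{A(\cdot, \tau_1), \ldots, A(\cdot, \tau_n)\}$ in $\mathcal{N}_A(\Omega)$. Using the defining property $\lambda_{\tau_i}(f) = \langle f, A(\cdot, \tau_i)\rangle_{\mathcal{H}}$ for all $f \in \mathcal{N}_A(\Omega)$, any relation $\sum_i \alpha_i \lambda_{\tau_i} \equiv 0$ forces $\langle f, \sum_i \alpha_i A(\cdot, \tau_i)\rangle_{\mathcal{H}} = 0$ for every $f \in \mathcal{N}_A(\Omega)$; testing against $f = \sum_i \alpha_i A(\cdot, \tau_i)$ makes the element itself vanish, and the converse is immediate.

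Second, I would invoke the identity already derived just before the theorem, namely
$$\boldsymbol{\alpha}^\intercal \mathbf{K} \boldsymbol{\alpha} = \left\| \sum_{i=1}^n \alpha_i A(\cdot, \tau_i) \right\|_{\mathcal{H}}^2,$$
valid for every $\boldsymbol{\alpha} \in \mathbb{R}^n$, together with the definition $\mathbf{K}_{i,j} = K(\tau_i, \tau_j) = \langle A(\cdot, \tau_i), A(\cdot, \tau_j)\rangle_{\mathcal{H}}$. Symmetry of $\mathbf{K}$ is immediate from symmetry of the inner product. By step one the right-hand side is strictly positive whenever $\boldsymbol{\alpha} \neq 0$, so $\mathbf{K}$ is symmetric positive definite, and in particular invertible.

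Third, the invertibility of $\mathbf{K}$ gives the unique solution $\mathbf{c} = \mathbf{K}^{-1} \boldsymbol{\lambda}$ of the system $\mathbf{K} \mathbf{c} = \boldsymbol{\lambda}$. Substituting into the expansion $s_f(x) = \sum_{j=1}^n c_j A(x, \tau_j) = \mathbf{a}(x)^\intercal \mathbf{c}$ then yields the closed form $s_f(x) = \mathbf{a}(x)^\intercal \mathbf{K}^{-1} \boldsymbol{\lambda}$. I do not foresee a genuine obstacle; the only mild subtlety is to make the equivalence in step one precise, since it rests on the averaging property being valid throughout $\mathcal{N}_A(\Omega)$ (not merely on the spanning subspace), but this is guaranteed by the definition of an AKHS and has been used implicitly in the discussion leading up to the theorem.
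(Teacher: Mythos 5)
Your proposal is correct and follows essentially the same route as the paper, which proves this theorem implicitly in the discussion immediately preceding its statement: the equivalence of linear independence of the functionals and of the representers $A(\cdot,\tau_i)$, the identity $\boldsymbol{\alpha}^\intercal \mathbf{K}\boldsymbol{\alpha} = \bigl\|\sum_i \alpha_i A(\cdot,\tau_i)\bigr\|_{\mathcal{H}}^2$ giving positive definiteness, and then inversion of the linear system. Your explicit Riesz-representation argument for the equivalence in step one is a welcome elaboration of a point the paper merely asserts.
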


\begin{remark}
The combination of Theorem \ref{thm:histopolationunique} with Theorem \ref{thm:AKHSmain} reveals the following fundamental connection between interpolation and histopolation problems: solving a histopolation problem in an AKHS $\mathcal{N}_A(\Omega)$ is equivalent to solving a respective interpolation problem in the RKHS $\mathcal{N}_K(\mathcal{T})$.
\end{remark}

The linear independence of the averaging functionals $\{ \lambda_{\tau_1}, \ldots, \lambda_{\tau_n} \}$ is a fundamental prerequisite for Theorem \ref{thm:histopolationunique}. Since these functionals essentially depend on the domains $\{\omega_{\tau_1}, \ldots, \omega_{\tau_n}\}$, we aim to identify conditions on these domains that guarantee the linear independence of the kernel functions $\{ A(\cdot,\tau_1), \ldots, A(\cdot,\tau_n) \}$. To this end, following Section \ref{sec:AKHSbyRKHS}, we assume now that the averaging kernel $A$ is constructed upon a continuous symmetric positive definite kernel $\Phi$. For such a kernel we can introduce the compact  integral operator $T: L_2(\Omega) \rightarrow \mathcal{N}_{\Phi}(\Omega) \subseteq L_2(\Omega)$, defined by
\begin{equation*}
    T(v)(x) = \int_{\Omega} \Phi(x,y)v(y) \text{d}y, \qquad v \in L_2(\Omega), \quad x \in \Omega.
\end{equation*}
If $ \Omega \subset \mathbb{R}^d $ is compact, the integral operator $T$ maps $L_2(\Omega)$ continuously into the native space $\mathcal{N}_{\Phi}(\Omega)$. It is the adjoint of the embedding operator of the RKHS $\mathcal{N}_{\Phi}(\Omega)$ into $L_2(\Omega)$, i.e., it satisfies
    \begin{equation} \label{eq:adjointembedding}
        \langle f,v \rangle_{L_2(\Omega)} = \langle f,T(v) \rangle_{\mathcal{N}_{\Phi}(\Omega)}.
    \end{equation}
    Furthermore, the range of $T$ is dense in $\mathcal{N}_{\Phi}(\Omega)$, as shown in \citep[Proposition 10.28]{Wendland_2004}.

If the averaging kernel $A$ is constructed upon the positive definite kernel $\Phi$, the corresponding reproducing kernel $K$ is determined by \eqref{eq:reproducingkernel}, and the entries of the histopolation matrix $\mathbf{K}$ in \eqref{eq:histopolationmatrix} based on the finite collection of subsets $\{ \omega_{\tau_1}, \ldots, \omega_{\tau_n}\}$ can be explicitly written as
\begin{equation} \label{eq_definition_matrix_A}
    \mathbf{K}_{i,j} = \frac{1}{|\omega_{\tau_i}|} \frac{1}{|\omega_{\tau_j}|} \int_{\omega_{\tau_i}} \int_{\omega_{\tau_j}} \Phi(x,y)  \text{d}x \text{d}y.
\end{equation}
We can now show the following result, which relates the unisolvence of the histopolation problem with the linear independence of the characteristic functions of the averaging domains in the space $L_2(\Omega)$.

\begin{theorem} \label{thm_A_strictly_positive_definite}
    Let $\Phi \in C(\Omega \times \Omega)$ be a symmetric positive definite kernel on the compact set $\Omega$. We assume further that the RKHS $\mathcal{N}_{\Phi}(\Omega)$ is dense in $L_2(\Omega)$ and that the characteristic functions $\{ \chi_{\omega_{\tau_1}}, \ldots, \chi_{\omega_{\tau_n}} \}$ are linearly independent in $L_2(\Omega)$. Then, the histopolation matrix $\mathbf{K}$ in \eqref{eq_definition_matrix_A} is symmetric and positive definite, and, in particular, invertible.  
\end{theorem}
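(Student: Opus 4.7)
The plan is to rewrite the quadratic form $\mathbf{c}^\intercal \mathbf{K} \mathbf{c}$ as a squared norm in $\mathcal{N}_{\Phi}(\Omega)$, and then to deduce strict positivity from the injectivity of the integral operator $T$ combined with the linear independence of the characteristic functions. Symmetry of $\mathbf{K}$ is immediate from the symmetry of $\Phi$.

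First I would fix a nonzero vector $\mathbf{c} = (c_1, \ldots, c_n) \in \mathbb{R}^n$ and introduce the auxiliary function
\[
v = \sum_{i=1}^n \frac{c_i}{|\omega_{\tau_i}|}\chi_{\omega_{\tau_i}} \in L_2(\Omega).
\]
Substituting the formula \eqref{eq_definition_matrix_A} into $\mathbf{c}^\intercal \mathbf{K} \mathbf{c}$ and applying Fubini, the quadratic form can be rewritten as
\[
\mathbf{c}^\intercal \mathbf{K} \mathbf{c} = \int_\Omega \int_\Omega \Phi(x,y)\, v(x) v(y)\, \mathrm{d}x\, \mathrm{d}y = \langle T v, v \rangle_{L_2(\Omega)}.
\]
Using the adjoint relation \eqref{eq:adjointembedding} with $f = Tv \in \mathcal{N}_{\Phi}(\Omega)$, this rewrites further as $\langle Tv, Tv \rangle_{\mathcal{N}_{\Phi}(\Omega)} = \|Tv\|_{\mathcal{N}_{\Phi}(\Omega)}^2 \geq 0$. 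This already gives positive semi-definiteness of $\mathbf{K}$.

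To obtain strict positive definiteness, I would argue that $\mathbf{c}^\intercal \mathbf{K} \mathbf{c} = 0$ forces $\mathbf{c} = 0$. If $\|Tv\|_{\mathcal{N}_{\Phi}(\Omega)} = 0$, then $Tv = 0$ in $\mathcal{N}_{\Phi}(\Omega)$. Since $T$ is the adjoint of the embedding $J : \mathcal{N}_{\Phi}(\Omega) \hookrightarrow L_2(\Omega)$ and since $\mathcal{N}_{\Phi}(\Omega)$ is dense in $L_2(\Omega)$ by assumption, the range of $J$ is dense, and hence $\ker T = \ker J^* = (\mathrm{range}\, J)^\perp = \{0\}$. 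Therefore $Tv = 0$ implies $v = 0$ in $L_2(\Omega)$, i.e.,
\[
\sum_{i=1}^n \frac{c_i}{|\omega_{\tau_i}|}\chi_{\omega_{\tau_i}} = 0 \quad \text{in } L_2(\Omega).
\]
The linear independence of the characteristic functions $\chi_{\omega_{\tau_1}}, \ldots, \chi_{\omega_{\tau_n}}$ then gives $c_i/|\omega_{\tau_i}| = 0$ for all $i$, and hence $\mathbf{c} = 0$. This contradiction completes the argument.

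The only subtle step is the injectivity of $T$: it requires translating the density hypothesis on $\mathcal{N}_{\Phi}(\Omega)$ in $L_2(\Omega)$ into the dual statement $\ker J^* = \{0\}$. Once this functional-analytic reduction is in place, the remainder of the proof reduces to the identification of the quadratic form with $\|Tv\|_{\mathcal{N}_{\Phi}(\Omega)}^2$ and the linear-independence hypothesis on the indicator functions, both of which are routine.
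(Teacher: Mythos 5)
Your proposal is correct and follows essentially the same route as the paper's proof: both rewrite the quadratic form as $\langle v, Tv\rangle_{L_2(\Omega)}$ for $v = \sum_i \frac{c_i}{|\omega_{\tau_i}|}\chi_{\omega_{\tau_i}}$, pass to $\|Tv\|_{\mathcal{N}_{\Phi}(\Omega)}^2$ via the adjoint relation \eqref{eq:adjointembedding}, and conclude strict positivity from the injectivity of $T$ deduced from the density of $\mathcal{N}_{\Phi}(\Omega)$ in $L_2(\Omega)$ together with the linear independence of the characteristic functions. The only difference is cosmetic: you argue by contradiction where the paper asserts the strict inequality directly.
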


\begin{proof} 
The symmetry of $\mathbf{K}$ follows from the symmetry of the reproducing kernel $K$, can however be also derived directly from \eqref{eq_definition_matrix_A} by using the symmetry of $\Phi$ and Fubini's theorem.  
To prove that the matrix $\mathbf{K}$ is positive definite, it is convenient to write the corresponding entries in terms of the characteristic functions $\chi_{\omega_{\tau_i}}$ and the integral operator $T$:
\begin{equation*}
\begin{split}
|\omega_{\tau_i}||\omega_{\tau_j}| \mathbf{K}_{i,j} & = \int_{\omega_{\tau_i}} \int_{\omega_{\tau_j}} \Phi(x,y)  \text{d}x \text{d}y = \\
& = \int_{\Omega} \left( \chi_{\omega_{\tau_i}}(y) \int_{\Omega} \Phi(x,y) \chi_{\omega_{\tau_j}}(x)   \text{d}x \right) \text{d}y = \langle \chi_{\omega_{\tau_i}}, T(\chi_{\omega_{\tau_j}}) \rangle_{L_2(\Omega)}.
\end{split}
\end{equation*}
We consider now $\alpha \in \mathbb{R}^n \setminus \{0\}$ and show that $\mathbf{K}$ is positive definite, i.e., that $\sum_{i,j=1}^n \alpha_i \alpha_j \mathbf{K}_{i,j} > 0$. Using the identity above, we get
\begin{equation*}
    \begin{split}
        \sum_{i=1}^n \sum_{j=1}^n \alpha_i \alpha_j \mathbf{K}_{i,j} & = \sum_{i=1}^n \sum_{j=1}^n \alpha_i \alpha_j \left\langle \frac{1}{|\omega_{\tau_i}|}\chi_{\omega_{\tau_i}}, T\left(\frac{1}{|\omega_{\tau_j}|}\chi_{\omega_{\tau_j}}\right) \right\rangle_{L_2(\Omega)} = \\
        & = \sum_{i=1}^n \alpha_i  \left\langle \frac{1}{|\omega_{\tau_i}|}\chi_{\omega_{\tau_i}}, T\left( \sum_{j=1}^n  \frac{\alpha_j}{|\omega_{\tau_j}|}\chi_{\omega_{\tau_j}}\right) \right\rangle_{L_2(\Omega)} = \\
        & = \left\langle \sum_{i=1}^n  \frac{\alpha_i}{|\omega_{\tau_i}|}\chi_{\omega_{\tau_i}}, T\left( \sum_{j=1}^n  \frac{\alpha_j}{|\omega_{\tau_j}|}\chi_{\omega_{\tau_j}}\right) \right\rangle_{L_2(\Omega)} = \langle v, T(v) \rangle_{L_2(\Omega)},
    \end{split}
\end{equation*}
where $v = \sum_{i=1}^n  \frac{\alpha_i}{|\omega_{\tau_i}|} \chi_{\omega_{\tau_i}} $ and $v \neq 0$ because $\{\omega_{\tau_1}, \ldots, \omega_{\tau_n}\}$ are linearly independent. By the identity \eqref{eq:adjointembedding} shown in \citep[Proposition 10.28]{Wendland_2004}, 
we therefore get 
\begin{equation*}
    \begin{split}
        \sum_{i=1}^n \sum_{j=1}^n \alpha_i \alpha_j \mathbf{K}_{i,j}  = \langle v, T(v) \rangle_{L_2(\Omega)} = \langle T(v), T(v) \rangle_{\mathcal{N}_{\Phi}(\Omega)} = \| T(v) \|_{\mathcal{N}_{\Phi}(\Omega)}^2 > 0,
    \end{split}
\end{equation*}
where we have strict positivity due to the injectivity of the integral operator $T$. The injectivity of $T$ follows from the fact that the range of the adjoint operator (the embedding operator from $\mathcal{N}_{\Phi}(\Omega)$ into $L_2(\Omega)$) corresponds to $\mathcal{N}_{\Phi}(\Omega)$ and the assumption that $\mathcal{N}_{\Phi}(\Omega)$ is dense in $L_2(\Omega)$.
\end{proof}

Theorem \ref{thm_A_strictly_positive_definite} is quite valuable for obtaining theoretical guarantees of the invertibility of the histopolation matrix $\mathbf{K}$. In fact, for many well-known positive definite kernels, such as the Mat{\'e}rn and Wendland kernels \citep[cf.][]{Wendland_2004}, it is known that the associated RKHS's are equivalent to Sobolev spaces. Therefore, their restrictions to a domain $\Omega$ are dense in $L_2(\Omega)$. In such cases it is then sufficient to verify that the characteristic functions $\{ \chi_{\omega_{\tau_1}}, \ldots, \chi_{\omega_{\tau_n}} \}$ are linearly independent in $L_2(\Omega)$. 

In the following, we establish a sufficient condition for the linear independence of the characteristic functions $\{ \chi_{\omega_{\tau_1}}, \ldots, \chi_{\omega_{\tau_n}} \}$ in $L_2(\Omega)$. This result  can be seen as a minor extension of the criterion presented in \citep[Theorem 4.7]{Albrecht2024}, where linear independence was considered in the space of compactly supported distributions. Compared to the distributional setting, the Hilbert space setting in $L_2(\Omega)$ enables a simpler proof of the criterion in terms of a Gram matrix argument.  

\begin{theorem} \label{thm_linear_independence_domains}
    Let $\{\omega_{\tau_1}, \ldots, \omega_{\tau_n}\}$ be a collection of bounded domains in $\Omega \subseteq \mathbb{R}^d$. Assume that they are ordered in such a way that there exist open balls $o_{\varepsilon}^{(i)}$, $i \in \{1, \ldots, n\}$, with radius $\varepsilon>0$ such that
    \begin{equation} \label{eq:criterionindependence}
    o_{\varepsilon}^{(i)} \subseteq \omega_{\tau_i}, \quad o_{\varepsilon}^{(i)} \cap \bigcup_{j = i+1}^n \omega_{\tau_j} = \emptyset, \quad i \in \{1, \ldots, n\}. 
    \end{equation}
    Then, the characteristic functions $\{ \chi_{\omega_{\tau_1}}, \ldots, \chi_{\omega_{\tau_n}} \}$ are linearly independent in $L_2(\Omega)$.   
\end{theorem}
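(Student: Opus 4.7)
The plan is to invoke the standard Hilbert space criterion that a finite family $f_1,\ldots,f_n$ in $L_2(\Omega)$ is linearly independent if and only if its Gram matrix $G\in\mathbb{R}^{n\times n}$, with entries $G_{ij}=\langle f_i, f_j\rangle_{L_2(\Omega)}$, is positive definite. For $f_i=\chi_{\omega_{\tau_i}}$ this matrix takes the explicit form $G_{ij}=|\omega_{\tau_i}\cap \omega_{\tau_j}|$, and positive definiteness of $G$ is equivalent to the implication
\[\sum_{i=1}^n\alpha_i\,\chi_{\omega_{\tau_i}}=0 \ \ \text{in}\ L_2(\Omega) \qquad \Longrightarrow \qquad \alpha_1=\cdots=\alpha_n=0.\]
The whole argument therefore reduces to verifying this implication.

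To verify it, the idea is to exploit the ordering in \eqref{eq:criterionindependence} and to peel off the coefficients one at a time. Starting from the assumed relation, I would restrict it to $o_{\varepsilon}^{(1)}$. By hypothesis $o_{\varepsilon}^{(1)}\subseteq \omega_{\tau_1}$, so $\chi_{\omega_{\tau_1}}\equiv 1$ on $o_{\varepsilon}^{(1)}$, whereas $\chi_{\omega_{\tau_j}}\equiv 0$ on $o_{\varepsilon}^{(1)}$ for every $j\geq 2$ thanks to the disjointness condition. Since $o_{\varepsilon}^{(1)}$ is an open ball and therefore has positive Lebesgue measure, the almost-everywhere identity collapses to $\alpha_1=0$. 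Iterating the same step with $o_{\varepsilon}^{(k)}$, after substituting the previously established $\alpha_1=\cdots=\alpha_{k-1}=0$ and using that $o_{\varepsilon}^{(k)}\subseteq\omega_{\tau_k}$ is disjoint from every $\omega_{\tau_j}$ with $j>k$, forces $\alpha_k=0$ for each $k\in\{1,\ldots,n\}$.

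I do not anticipate any serious obstacle: the hypothesis \eqref{eq:criterionindependence} is tailored so that the system becomes triangular once one tests against the sets $o_{\varepsilon}^{(1)},\ldots,o_{\varepsilon}^{(n)}$. The only delicate point is the measure-theoretic one, namely that the $L_2$-identity can legitimately be restricted to each open ball $o_{\varepsilon}^{(i)}$ and there read as a pointwise equality of constants; this is immediate because each $o_{\varepsilon}^{(i)}$ has strictly positive Lebesgue measure. This is also where the Hilbert space setting is genuinely easier than the distributional framework of \citep[Theorem 4.7]{Albrecht2024}, since in $L_2(\Omega)$ the restriction-and-evaluation manoeuvre is unambiguous modulo null sets and no test-function machinery is required.
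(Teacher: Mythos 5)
Your argument is correct and is essentially the paper's proof in a different wrapper: the paper forms the cross-Gram matrix $\mathbf{H}_{i,j}=\langle \chi_{o_{\varepsilon}^{(i)}},\chi_{\omega_{\tau_j}}\rangle_{L_2(\Omega)}$, observes that \eqref{eq:criterionindependence} makes it lower triangular with positive diagonal (hence invertible), and concludes, which is exactly the forward substitution you carry out by restricting the a.e.\ identity to $o_{\varepsilon}^{(1)},\ldots,o_{\varepsilon}^{(n)}$ in order. Your opening detour through the Gram matrix $G_{ij}=|\omega_{\tau_i}\cap\omega_{\tau_j}|$ is never used and could be dropped, but the core induction is sound and complete.
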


\begin{proof}
Because of \eqref{eq:criterionindependence}, the Gram matrix $\mathbf{G}$ with the entries $\mathbf{G}_{i,j} = \langle \chi_{o_{\varepsilon}^{(i)}}, \chi_{o_{\varepsilon}^{(j)}} \rangle_{L_2(\Omega)}$ is a diagonal matrix with positive diagonal entries. For this the characteristic functions $\{\chi_{o_{\varepsilon}^{(1)}}, \ldots, \chi_{o_{\varepsilon}^{(n)}}\}$ are orthogonal, and thus also linearly independent in $L_2(\Omega)$. 

Consider now a second Gram matrix $\mathbf{H}$ based on the inner products
$\mathbf{H}_{i,j} = \langle \chi_{o_{\varepsilon}^{(i)}}, \chi_{\omega_{\tau_j}}  \rangle_{L_2(\Omega)}$ 
between the linearly independent functions $\{\chi_{o_{\varepsilon}^{(1)}}, \ldots, \chi_{o_{\varepsilon}^{(n)}}\}$ and the system $\{ \chi_{\omega_{\tau_1}}, \ldots, \chi_{\omega_{\tau_n}} \}$. Because of \eqref{eq:criterionindependence}, the matrix $\mathbf{H}$ is lower triangular and has the same diagonal
as $\mathbf{G}$. Therefore, also the Gram matrix $\mathbf{H}$ is invertible, implying that the characteristic functions $\{ \chi_{\omega_{\tau_1}}, \ldots, \chi_{\omega_{\tau_n}} \}$ are linearly independent in $L_2(\Omega)$. 
\end{proof}

\begin{example}
On a compact interval $I \subseteq \mathbb{R}$, consider the subintervals $\{\omega_{y_1}, \ldots, \omega_{y_n}\}$ given as $\omega_{y_{i}} = [-a/2,a/2] + y_i$ with ordered centers $y_1 < y_2 < \cdots < y_n$ and uniform interval length $a > 0$. Then, by Theorem \ref{thm_linear_independence_domains}, the characteristic functions $\{ \chi_{\omega_{y_1}}, \ldots, \chi_{\omega_{y_n}} \}$ are linearly independent in $L_2(I)$.
\begin{itemize}
    \item[$(i)$] The Mat{\'e}rn kernel function $\phi(x) = e^{- \lambda |x|}$ considered in Example \ref{ex:RUAK} (i) generates a RKHS which is norm-equivalent to the Sobolev space $H^1(I)$. Therefore, $\mathcal{N}_{\phi}(I)$ is dense in $L_2(I)$. For the segments introduced above, we can therefore apply Theorem \ref{thm_A_strictly_positive_definite} and obtain that the respective histopolation matrix $\mathbf{K}$ is invertible. This guarantees the unisolvence of the histopolation problem in the example illustrated in Fig. \ref{fig:histopolationmatern}.
    \item[$(ii)$] For the Hilbert space $L_2(I)$ we know directly that the indicator functions $\{ \chi_{\omega_{y_1}}, \ldots, \chi_{\omega_{y_n}} \}$ are linearly independent. For this, also the averaging kernel $A_1(x,y) = \frac{1}{a} \chi_{\omega_y}(x)$ introduced in \eqref{eq:indicatorkernel1D} provides linearly independent basis functions $\{A_1(x,y_1), \ldots A_1(x,y_n)\}$ and the corresponding histopolation matrix $\mathbf{K}_1$ is invertible. This ensures the unisolvence of the histopolation problem shown in Fig. \ref{fig:histopolationindicator}.
\end{itemize}
\end{example}

\section{Uniform error estimates for average values} \label{sec:errorestimates}

In general, uniform convergence of the histopolant $s_f$ towards a function $f$ cannot be guaranteed without additional assumptions on the averaging domains. While this is not unexpected for discontinuous functions, it can be also observed for smooth functions, as shown in the numerical experiments of Section \ref{sec:numericalexperiments}. For histopolation, a more natural type of convergence is the uniform convergence of the mean values. In fact, the isometry of Theorem \ref{thm:AKHSmain} will guarantee that for a large class of functions the average values $\lambda_\tau(s_f)$ converge towards the values $\lambda_{\tau}(f)$ if the domains $\omega_\tau$ are sufficiently dense. 

To obtain respective error bounds, we introduce the Lagrange basis functions $\ell_j(x)$, $j \in \{1, \ldots,n\}$ as the elements in the space $\mathcal{N}_{A}(\Omega,\mathcal{T}_n)$ that satisfy the cardinal histopolation conditions
\[ \lambda_{\tau_i} (\ell_j) = \left\{ \begin{array}{ll}
     1 & \text{if}\; i = j,  \\
     0 & \text{if}\; i \neq j.
\end{array}\right. \]

\begin{figure}[h]
    \centering
    \includegraphics[width=0.49\linewidth]{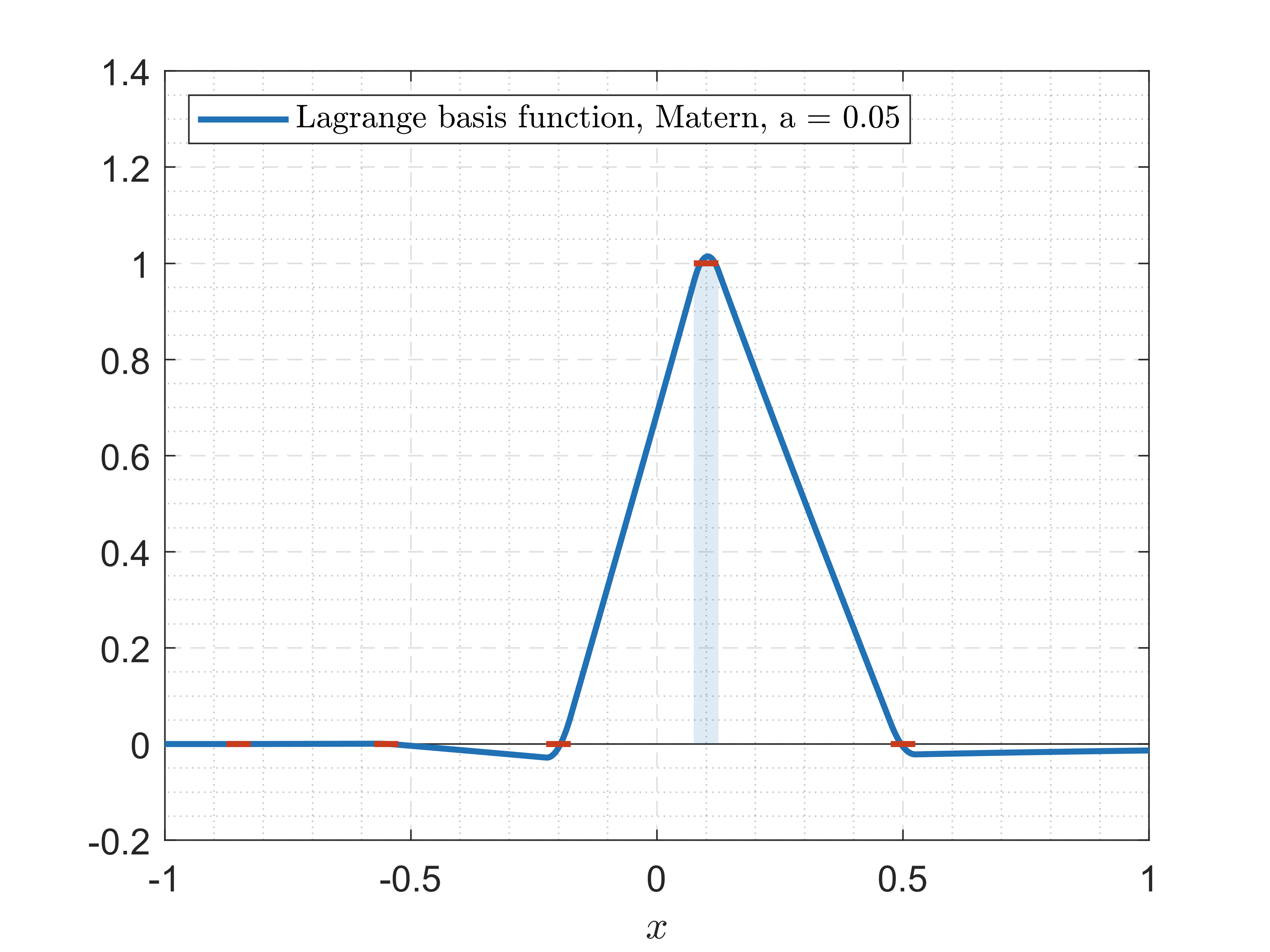}
    \includegraphics[width=0.49\linewidth]{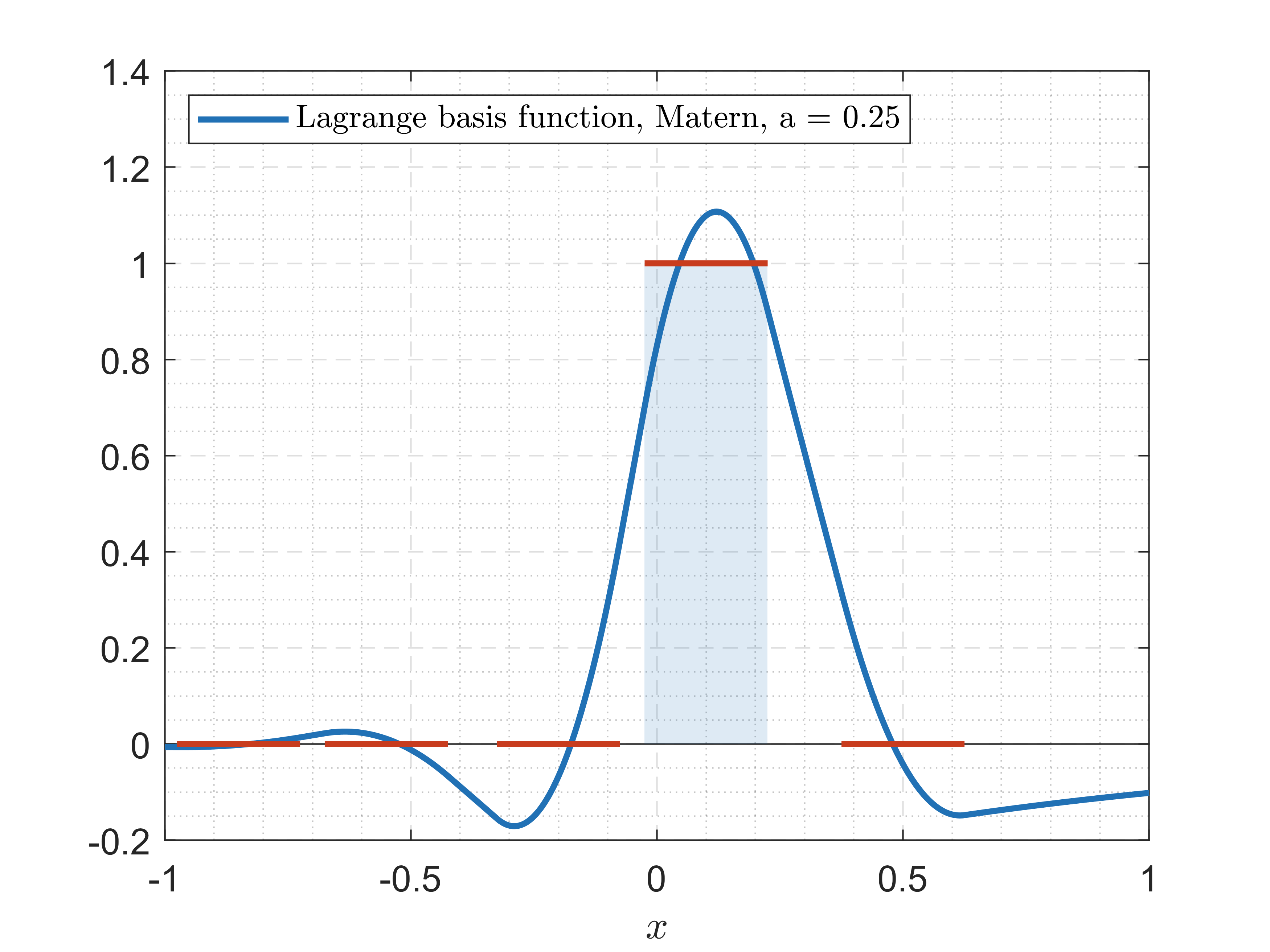}
    \caption{Lagrange basis functions for histopolation on $5$ segments (red) with length $a = 0.05$ (left) and $a = 0.25$ (right) using the one-dimensional averaged Mat{\'e}rn function in Example \ref{ex:RUAK} (i).}
    \label{fig:cardinalbasisfunctions}
\end{figure}

If the conditions of Theorem \ref{thm:histopolationunique} are satisfied, the functions $\ell_j(x)$, $j \in \{1, \ldots, n\}$, are uniquely determined and form a basis of the $n$-dimensional space $\mathcal{N}_{A}(\Omega,\mathcal{T}_n)$. Furthermore, we can write the histopolant $s_f$ in the cardinal form
\begin{equation} \label{eq:interpolatorLBF}
    s_f(x) = \sum_{j = 1}^n \lambda_{\tau_j}(f) \ell_j(x). 
\end{equation}
We can transfer this basis and the histopolant $s_f$ into the $n$-dimensional subspace 
\[ \mathcal{N}_K(\mathcal{T},\mathcal{T}_n)= \mathrm{span} \{K(\cdot, \tau_i) \ : \ i \in \{1, \ldots, n\} \} \]
of the RKHS $\mathcal{N}_K(\mathcal{T})$ by using the isometry $ \Lambda: \mathcal{N}_A (\Omega) \to \mathcal{N}_K(\mathcal{T}) $ introduced in Theorem \ref{thm:AKHSmain}. The respective (nodal) Lagrange basis of the space $\mathcal{N}_K(\mathcal{T},\mathcal{T}_n)$ is given as $\{\Lambda \ell_i(\tau) \; : \; i \in \{1, \ldots, n\}\}$ and the mapped histopolant in his cardinal form reads as 
\[ \Lambda s_f (\tau) = \sum_{j = 1}^n \lambda_{\tau_j}(f) \Lambda \ell_j(\tau). \]
Note that if the given function $f$ lies in the AKHS $\mathcal{N}_A(\Omega)$, we could as well write $\Lambda f(\tau_j)$ instead of $\lambda_{\tau_j}(f)$. 
For the reproducing kernel $K$, we can define the power function $P_{K,\mathcal{T}_n}(\tau)$ as
\[P_{K,\mathcal{T}_n}(\tau) := \left\| K(\cdot, \tau) - \sum_{j=1}^n \lambda_{\tau}(\ell_j) K(\cdot, \tau_{j}) \right\|_{\mathcal{N}_K(\mathcal{T})} = \left\| A(\cdot, \tau) - \sum_{j=1}^n \lambda_{\tau}(\ell_j) A(\cdot, \tau_{j}) \right\|_{\mathcal{H}},\]
where the second equality follows again by the isometry between the spaces $\mathcal{N}_A(\Omega)$ and $\mathcal{N}_K(\mathcal{T})$. For this power function, we can obviously also write
\begin{align*}
(P_{K,\mathcal{T}_n}(\tau))^2 &= \left\| K(\cdot, \tau) - \sum_{j=1}^n \lambda_{\tau}(\ell_j) K(\cdot, \tau_{j}) \right\|_{\mathcal{N}_K(\mathcal{T})}^2 \\
&= K(\tau,\tau) - 2 \sum_{j = 1}^n \lambda_{\tau}(\ell_j) K(\tau,\tau_j) + \sum_{i = 1}^n \sum_{j = 1}^n \lambda_{\tau}(\ell_i) \lambda_{\tau}(\ell_j) K(\tau_i,\tau_j).
\end{align*}
For RKHS's, the power function is a useful tool to obtain generic error estimates. The isometry in Theorem \ref{thm:AKHSmain} allows to transfer these estimates also for AKHS's. 

\begin{corollary} \label{cor:errorestimate}
Assume that the averaging functionals $\{ \lambda_{\tau} \ : \ \tau \in \mathcal{T}\}$ are continuous in $\mathcal{H}$ and generate an AKHS $\mathcal{N}_A(\Omega)$. Further, let $f \in \mathcal{N}_A(\Omega)$, $\mathcal{T}_n = \{\tau_1, \ldots, \tau_n\} \subseteq \mathcal{T}$, and $s_f$ the histopolant calculated upon the linearly independent average functionals $\lambda_{\tau_i}(f)$ based on the domains $\omega_{\tau_i}$, $i \in \{1, \ldots, n\}$. Then, we have
\[ \sup_{\tau \in \mathcal{T}}|\lambda_{\tau}(f - s_f)| \leq \sup_{\tau \in \mathcal{T}} P_{K,\mathcal{T}_n}(\tau) \, \|f\|_{\mathcal{H}}.\]
\end{corollary}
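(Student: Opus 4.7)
The plan is to translate the statement into the RKHS $\mathcal{N}_K(\mathcal{T})$ via the isometry $\Lambda$ of Theorem \ref{thm:AKHSmain}, where the standard power-function error estimate for kernel interpolation is immediately available, and then pull the bound back to the AKHS side. The key point is that, under $\Lambda$, the averaging functional $\lambda_\tau$ becomes point evaluation at $\tau$ in $\mathcal{N}_K(\mathcal{T})$, and the histopolant $s_f$ is mapped to the RKHS interpolant of $\Lambda f$ at the nodes $\{\tau_1,\dots,\tau_n\}$.

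First, I would observe that $s_f$ is in fact the orthogonal projection of $f$ onto the finite dimensional subspace $\mathcal{N}_A(\Omega,\mathcal{T}_n)$. Indeed, for any $g = \sum_{j=1}^n c_j A(\cdot,\tau_j)$ in that subspace, the averaging property gives
\[
\langle f - s_f, g \rangle_{\mathcal{H}} = \sum_{j=1}^n c_j \,\langle f-s_f, A(\cdot,\tau_j)\rangle_{\mathcal{H}} = \sum_{j=1}^n c_j \,\lambda_{\tau_j}(f - s_f) = 0,
\]
by the histopolation conditions. In particular, $\|f - s_f\|_{\mathcal{H}} \leq \|f\|_{\mathcal{H}}$. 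Via the isometry this translates to $\|\Lambda f - \Lambda s_f\|_{\mathcal{N}_K(\mathcal{T})} \leq \|f\|_{\mathcal{H}}$, and the same orthogonality argument shows that $\Lambda f - \Lambda s_f$ is orthogonal to $\mathcal{N}_K(\mathcal{T},\mathcal{T}_n)$ in the RKHS.

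Next, for any fixed $\tau\in\mathcal{T}$ I would use the reproducing property in $\mathcal{N}_K(\mathcal{T})$ together with the fact that $\lambda_\tau(f-s_f) = \Lambda(f-s_f)(\tau)$:
\[
\lambda_\tau(f-s_f) \;=\; \langle \Lambda(f-s_f),\, K(\cdot,\tau)\rangle_{\mathcal{N}_K(\mathcal{T})}.
\]
Since $\sum_{j=1}^n \lambda_\tau(\ell_j) K(\cdot,\tau_j) \in \mathcal{N}_K(\mathcal{T},\mathcal{T}_n)$ and $\Lambda(f-s_f)$ is orthogonal to that subspace, we may subtract it inside the inner product and then apply Cauchy--Schwarz:
\[
|\lambda_\tau(f-s_f)| \;=\; \Bigl|\Bigl\langle \Lambda(f-s_f),\, K(\cdot,\tau) - \textstyle\sum_{j=1}^n \lambda_\tau(\ell_j) K(\cdot,\tau_j)\Bigr\rangle_{\mathcal{N}_K(\mathcal{T})}\Bigr| \;\leq\; \|\Lambda(f-s_f)\|_{\mathcal{N}_K(\mathcal{T})} \, P_{K,\mathcal{T}_n}(\tau).
\]

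Finally, combining with the isometry bound $\|\Lambda(f-s_f)\|_{\mathcal{N}_K(\mathcal{T})} = \|f-s_f\|_{\mathcal{H}} \leq \|f\|_{\mathcal{H}}$ and taking the supremum over $\tau\in\mathcal{T}$ yields the claim. There is no serious obstacle here: the whole argument is a routine power-function estimate, and the only substantive ingredient beyond Theorem \ref{thm:AKHSmain} is recognizing that $s_f$ is an orthogonal projection, which follows directly from the averaging property as above.
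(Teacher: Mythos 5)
Your proof is correct, and the core mechanism is the same as the paper's: pair the error functional against the power-function representer and apply Cauchy--Schwarz. The difference is in the decomposition. The paper stays entirely on the AKHS side and uses the identity
\[
\lambda_\tau(f-s_f)=\Bigl\langle f,\; A(\cdot,\tau)-\sum_{j=1}^n\lambda_\tau(\ell_j)A(\cdot,\tau_j)\Bigr\rangle_{\mathcal{H}},
\]
which follows directly from the averaging property and the cardinal form \eqref{eq:interpolatorLBF} of $s_f$; Cauchy--Schwarz applied to $f$ then gives the bound in one line, with no need for the orthogonality of the residual. You instead first establish that $s_f$ is the orthogonal projection of $f$ onto $\mathcal{N}_A(\Omega,\mathcal{T}_n)$, transfer to the RKHS side via $\Lambda$, and apply Cauchy--Schwarz to $f-s_f$. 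Both are valid; your detour costs a few extra steps but buys the sharper intermediate estimate $|\lambda_\tau(f-s_f)|\le\|f-s_f\|_{\mathcal{H}}\,P_{K,\mathcal{T}_n}(\tau)$ with $\|f-s_f\|_{\mathcal{H}}\le\|f\|_{\mathcal{H}}$, which is the standard refined form of the power-function bound and could be worth recording explicitly. Note also that the transfer to $\mathcal{N}_K(\mathcal{T})$ is not actually needed: your orthogonality argument and the final Cauchy--Schwarz step work verbatim in $\mathcal{H}$ with the functions $A(\cdot,\tau)$, since the second expression for $P_{K,\mathcal{T}_n}(\tau)$ given in the paper is already stated in terms of the averaging kernel.
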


\begin{proof} Using the averaging property of the kernel $A(x,\tau)$ in the AKHS $\mathcal{N}_A(\Omega)$, and the Cauchy-Schwarz inequality, we directly get
\begin{align*}
|\lambda_{\tau}(f - s_f)| = \left| \left\langle f, A(\cdot, \tau) - \sum_{j=1}^n \lambda_{\tau}(\ell_j) A(\cdot, \tau_{j}) \right\rangle_{\mathcal{H}} \right| \leq \|f\|_{\mathcal{H}} P_{K,\mathcal{T}_n}(\tau).
\end{align*}
Passing to the supremum on $ \tau \in \mathcal{T} $ we get the claim.
\end{proof}

Corollary \ref{cor:errorestimate} enables us to leverage the extensive literature on (nodal) kernel interpolation and to exploit known estimates of the power function. 
In this context, uniform error estimates of the error $f - s_f$ are typically achieved by bounding the power function in terms of the fill distance of the set of interpolation nodes. For uniform radial averaging kernels $A(x,y) = \alpha(\|x-y\|_2)$, similar estimates for the power function $P_{K,\mathcal{T}_n}(y)$ of the reproducing kernel $K$ can be achieved using the fill distance
\[
h = h_{\Omega,\mathcal{T}_n} := \sup_{y \in \Omega} \; \min_{i \in \{1, \ldots, n\}} \| y - y_i \|_2.
\]
of the centers $y_i$ of the domains $\omega_{y_i} = \omega + y_i$. For radial kernels $K$ typical estimates of the power function lead to bounds of the form
\[ P_{K,\mathcal{T}_n}(y)^2 \leq F(h),\]
with an increasing function $F:[0,\infty) \to [0,\infty)$ satisfying $F(0) = 0$  \citep{Schaback1995}.

\begin{example}
We consider as a radial univariate kernel the Mexican hat wavelet introduced in Example \ref{ex:RUAK} (iv). The corresponding reproducing kernel $K$ is determined in terms of a linear combination $\kappa(x) = \frac{\lambda}{2 a^2} (2 e^{- \frac{x^2}{\lambda}} - e^{- \frac{(x+a)^2}{\lambda}} - e^{- \frac{(x-a)^2}{\lambda}} )$ of three Gaussians. By \citep{MadychNelson1992}, we know that the respective power function can be bounded by $P_{K,\mathcal{T}_n}(y) \leq e^{-\frac{\delta}{h}}$ for some $\delta > 0$. Therefore, Corollary \ref{cor:errorestimate} leads to the error estimate
\[ \sup_{y \in \Omega} \left| \frac{1}{a}\int_{y-a/2}^{y + a/2} (f(x) - s_f(x)) \de x \right| \leq \|f\|_{\mathcal{N}_{\Phi}(\mathbb{R})} e^{-\frac{\delta}{h}}\]
for histopolation with the averaging kernel constructed upon the Mexican hat wavelet $\phi$ and a set $\mathcal{T}_n$ of $n$ uniform domains with centers $y_i$ in $\Omega$ and fill distance $h$ between the centers.  
\end{example}

\section{Approximate averages by quadrature formulas} \label{sect:implementationstrategy}

To set up the linear system \eqref{eq:mainhistopolationsystem} for the numerical calculation of the kernel histopolant $s_f$, we have to know or determine the matrix entries $\mathbf{K}_{i,j}$ of $\mathbf{K} $ in \eqref{eq:histopolationmatrix}, as well as the entries of the right hand side in \eqref{eq:rhs}. In this way, we are able to compute the expansion coefficients of the kernel histopolant $s_f$ in \eqref{eq:approxAKHS}, as well as to determine $s_f$ itself. 

While the average values of the function $f$ over the domains $\omega_{\tau_i}$ are typically given as initial data, the average values $K(\tau_i,\tau_j)$ are only known in particular cases, some of which we saw in the previous sections. To calculate the kernel entries $K(\tau_i,\tau_j)$ for general kernels $\Phi(x,y)$ and general domains $\omega_{\tau_i}$, we require quadrature or cubature formulas that approximate the integral values by a weighted sum of function evaluations. We can write such a quadrature formula as

\begin{align*} 
\mathbf{K}_{i,j} & = \frac{1}{|\omega_{\tau_i}|} \frac{1}{|\omega_{\tau_j}|} \int_{\omega_{\tau_i}} \left(\int_{\omega_{\tau_j}} \phi(\|x - y\|_2) \mathrm{d} x \right) \mathrm{d} y \approx  \sum_{\ell = 1}^{m}  w_{\ell}^{(j)} \sum_{k=1} ^{m} w_k^{(i)} \phi(\|x_{k} - x_{\ell}\|_2),
\end{align*}
which gives the approximated matrix entries 
\begin{equation} \label{eq:approxMatrix}
\mathbf{K}_{i,j}^Q =  (\mathbf{w}^{(j)})^T \mathbf{\Phi}^Q \mathbf{w}^{(i)} ,
\end{equation}
where $\mathbf{w}^{(i)} = (w_1^{(i)} \, \cdots \, w_{m}^{(i)})^T$ denotes the (column) vector of the non-negative quadrature weights $w_k^{(i)} \geq 0$ used for the domain $\omega_{\tau_i}$, and $\{x_1, \ldots, x_{m}\}$ the quadrature nodes distributed over the union of the $n$ domains. The matrix $\mathbf{\Phi}^Q \in \mathbb{R}^{m \times m}$ contains the entries 
$$\mathbf{\Phi}_{k,\ell}^{Q} =  \phi(\|x_k - x_\ell \|_2) = \Phi(x_k,x_l). $$ 
We saw already that the histopolation matrix $ \mathbf{K} $ is symmetric and positive definite, when computed on a continuous level with suitable averaging domains. In a similar way, such a property can be stated also for the approximated histopolation matrix $ \mathbf{K}^Q $.

\begin{lemma}
    Let $ \Phi(x, y)$ be a symmetric and positive semi-definite kernel on $\Omega$. Then, the matrix $\mathbf{K}^Q $ is symmetric and positive semi-definite.
    Furthermore, if $\Phi$ is positive definite and for each domain $\omega_{\tau_i}$ there is a node $x_k$ such that $w_k^{(i)} > 0$ and $w_k^{(j)} = 0$ for all other domains $\omega_{\tau_j}$, $j \neq i$, then the matrix $\mathbf{K}^Q $ is positive definite.  
\end{lemma}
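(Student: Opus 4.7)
The plan is to work directly from the compact representation $\mathbf{K}_{i,j}^Q = (\mathbf{w}^{(j)})^T \mathbf{\Phi}^Q \mathbf{w}^{(i)}$ and transfer the symmetry and (semi-)definiteness of the underlying kernel Gram matrix $\mathbf{\Phi}^Q$ to $\mathbf{K}^Q$ via a change of variables, exactly in the spirit of the continuous arguments used in Theorem \ref{thm_A_strictly_positive_definite}.

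First I would dispose of symmetry: since $\Phi$ is symmetric, $\mathbf{\Phi}^Q$ is a symmetric matrix. Because each scalar $\mathbf{K}_{i,j}^Q$ equals its own transpose, one obtains $\mathbf{K}_{i,j}^Q = (\mathbf{w}^{(j)})^T \mathbf{\Phi}^Q \mathbf{w}^{(i)} = (\mathbf{w}^{(i)})^T (\mathbf{\Phi}^Q)^T \mathbf{w}^{(j)} = \mathbf{K}_{j,i}^Q$.

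For positive semi-definiteness, I would pick an arbitrary $\alpha \in \mathbb{R}^n$ and introduce the pooled weight vector $\mathbf{v} := \sum_{i=1}^n \alpha_i \mathbf{w}^{(i)} \in \mathbb{R}^m$. By bilinearity of the expression \eqref{eq:approxMatrix},
\[
\sum_{i,j=1}^n \alpha_i \alpha_j \mathbf{K}_{i,j}^Q \;=\; \mathbf{v}^T \mathbf{\Phi}^Q \mathbf{v}.
\]
Since $\Phi$ is positive semi-definite, the Gram-type matrix $\mathbf{\Phi}^Q$ is positive semi-definite, and the right hand side is $\geq 0$, which concludes the semi-definite part.

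For positive definiteness, the task reduces to showing $\mathbf{v}^T \mathbf{\Phi}^Q \mathbf{v} > 0$ whenever $\alpha \neq 0$. The hypothesis supplies, for each $i$, an index $k(i)$ with $w_{k(i)}^{(i)} > 0$ and $w_{k(i)}^{(j)} = 0$ for $j \neq i$; the $k(i)$-th component of $\mathbf{v}$ therefore equals $\alpha_i \, w_{k(i)}^{(i)}$, which is nonzero as soon as $\alpha_i \neq 0$. Combined with the positive definiteness of $\Phi$ on the (distinct) quadrature nodes, which makes $\mathbf{\Phi}^Q$ positive definite, this forces $\mathbf{v}^T \mathbf{\Phi}^Q \mathbf{v} > 0$.

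The main delicate point I expect is the implicit assumption that the quadrature nodes are distinct; if several domains share the same nodes, $\mathbf{\Phi}^Q$ is only positive semi-definite. I would address this by consolidating weights at each distinct node, which preserves the quadratic form $\mathbf{v}^T \mathbf{\Phi}^Q \mathbf{v}$ while producing a truly positive definite reduced kernel matrix. The separation hypothesis on the distinguished indices $k(i)$ then survives this collapse, so the reduced $\mathbf{v}$ still has a nonzero entry and strict positivity is preserved.
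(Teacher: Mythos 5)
Your proposal is correct and follows essentially the same route as the paper: symmetry from the symmetry of $\mathbf{\Phi}^Q$, positive semi-definiteness by factoring the quadratic form through the pooled weight vector, and positive definiteness by observing that the separation hypothesis forces the weight vectors to be linearly independent (your statement that $\mathbf{v}=\sum_i\alpha_i\mathbf{w}^{(i)}\neq 0$ for $\alpha\neq 0$ is exactly that). Your remark that $\mathbf{\Phi}^Q$ is positive definite only when the quadrature nodes are pairwise distinct is a legitimate point that the paper leaves implicit, and your weight-consolidation fix handles it cleanly.
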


\begin{proof} 
The symmetry of $ \mathbf{K}^Q  $ is a direct consequence of the symmetry of the kernel $ \Phi $. Namely, we have
$$ \mathbf{\Phi}_{k,\ell}^{Q} = \Phi (x_k, x_\ell) = \Phi(x_\ell, x_k ) = \mathbf{\Phi}_{\ell,k }^{Q}, $$
and to obtain the claim, it is sufficient to plug this identity in \eqref{eq:approxMatrix}. Also the positive semi-definiteness follows from the positive semi-definiteness of the kernel $\Phi$: based on the definition of the matrix entries $ \mathbf{K}_{i,j}^Q $ in \eqref{eq:approxMatrix}, we get for $v \in \mathbb{R}^n$ the inequality
\begin{align*}
    v^T \mathbf{K}^Q v & = v^T \begin{pmatrix}
        \mathbf{K}_{1,1}^Q & \ldots & \mathbf{K}_{1,n}^Q \\
        \vdots & & \vdots \\
       \mathbf{K}_{n,1}^Q & \ldots & \mathbf{K}_{n,n}^Q
    \end{pmatrix} v  = v^T \begin{pmatrix}
        (\mathbf{w}^{(1)})^T \\ \vdots\\ (\mathbf{w}^{(n)})^T
    \end{pmatrix} \mathbf{\Phi}^Q \begin{pmatrix}
        \mathbf{w}^{(1)} \cdots  \mathbf{w}^{(n)}
    \end{pmatrix} v  \geq 0.
\end{align*}
The fact that for every domain $\omega_{\tau_i}$ there is a node $x_k$ such that $w_k^{(i)} > 0$ and $w_k^{(j)} = 0$ for all other domains implies that the $n$ vectors $\{\mathbf{w}^{(1)}, \ldots, \mathbf{w}^{(n)}\}$ are linearly independent. Therefore, if the kernel $\Phi$ is positive definite, then the matrix $\mathbf{\Phi}^Q$ is positive definite, and consequently so is also $\mathbf{K}^Q$.
\end{proof}

Once the approximate histopolation matrix $\mathbf{K}^Q$ is calculated, the respective expansion coefficients can be obtained as $ \mathbf{c}^Q = (\mathbf{K}^Q)^{-1} \boldsymbol{\lambda} $. Then, the quadrature analog of the approximant in \eqref{eq:approxAKHS} is given as
\begin{align*}
    s_f^Q(x) &= \sum_{j = 1}^n c_j^Q \sum_{k = 1}^m w_k^{(j)}\phi(\Vert x - x_k \Vert_2) 
      \approx \sum_{j = 1}^n c_j  \frac{1}{|\omega_{\tau_j}|}\int_{\omega_{\tau_j}} \phi(\Vert x - y \Vert_2) \de y = \sum_{j = 1}^n c_j A (x, \tau_j) = s_f(x)
\end{align*}

The analysis of quadrature rules for RBF's is a very active field of research, we refer to \citep{GlaubitzQuad,GlaubitzIMA} for some recent advances on this topic.  

\section{Numerical Experiments} \label{sec:numericalexperiments}

We now present some numerical tests that show the applicability of kernel histopolation and provide insights into its convergence behavior. For simplicity, we restrict ourselves to one- and two-dimensional scenarios. In these tests, we do not optimize the shapes of the kernels, although we note that procedures such as the one described in \citep{FasshauerOptimal} can be applied with only minor modifications.

\subsection{Convergence in the univariate case: $ d = 1 $}

\begin{figure}[htbp]
    \centering
    \includegraphics[width=0.45\linewidth]{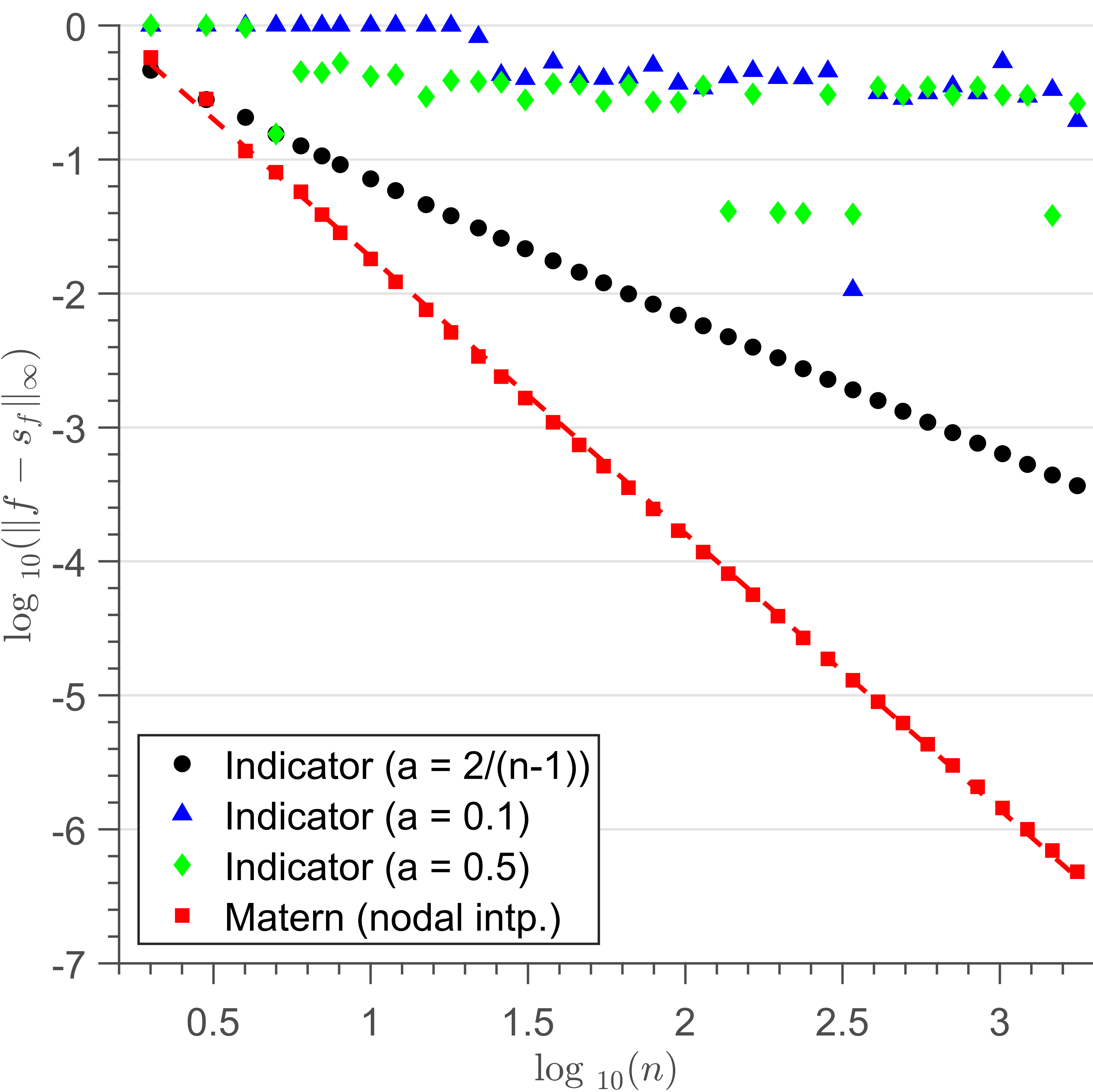} \qquad
    \includegraphics[width=0.45\linewidth]{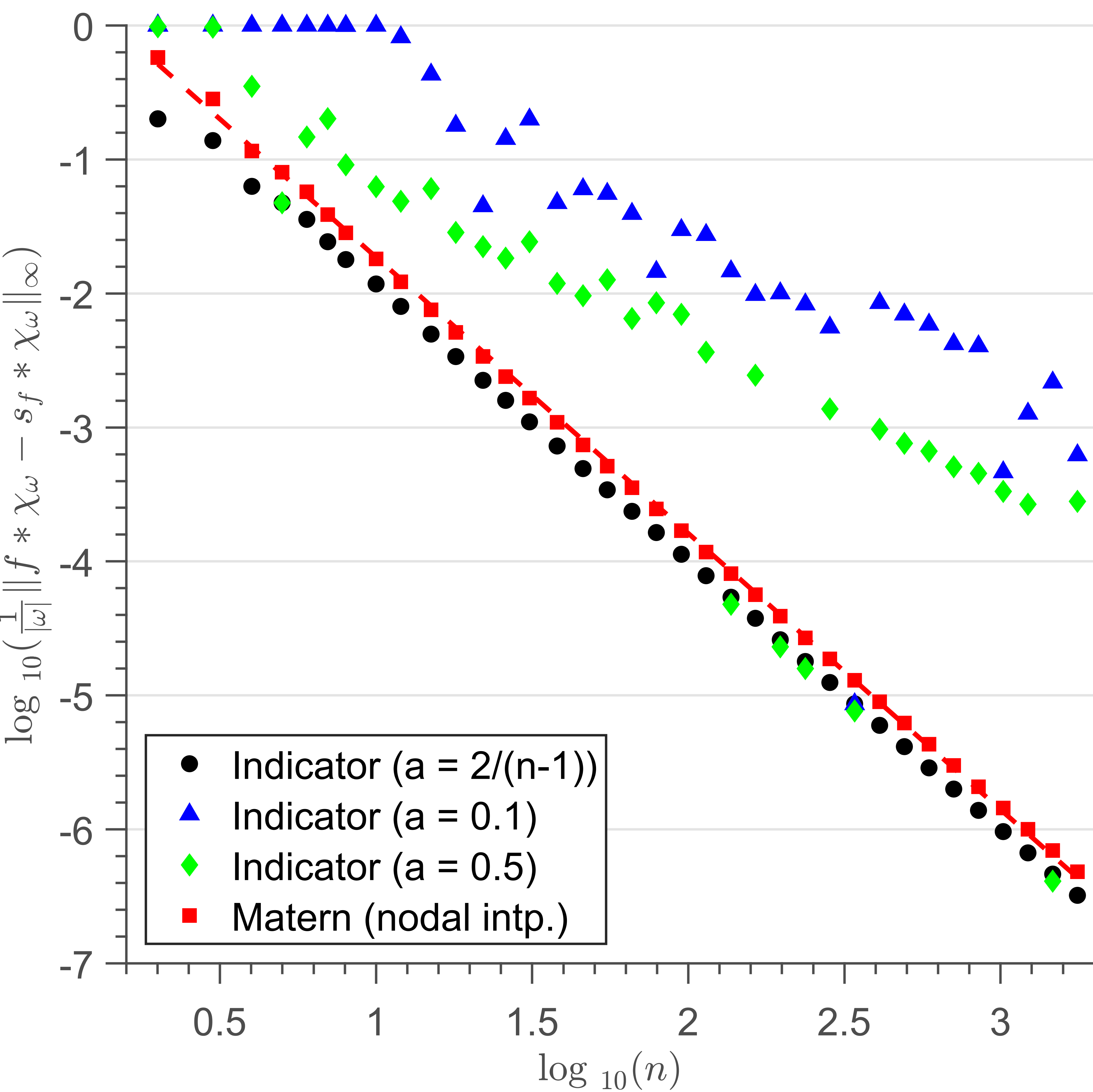} 
    \caption{Uniform errors (left) and uniform mean errors (right) of histopolation with the indicator kernel compared to the errors of the Mat{\'e}rn interpolation. While we do not have uniform convergence for fixed interval length, the errors of the mean values do converge uniformly also in this case. }
    \label{fig:convergenceindicator}
\end{figure}

\begin{figure}[htbp]
    \centering
    \includegraphics[width=0.45\linewidth]{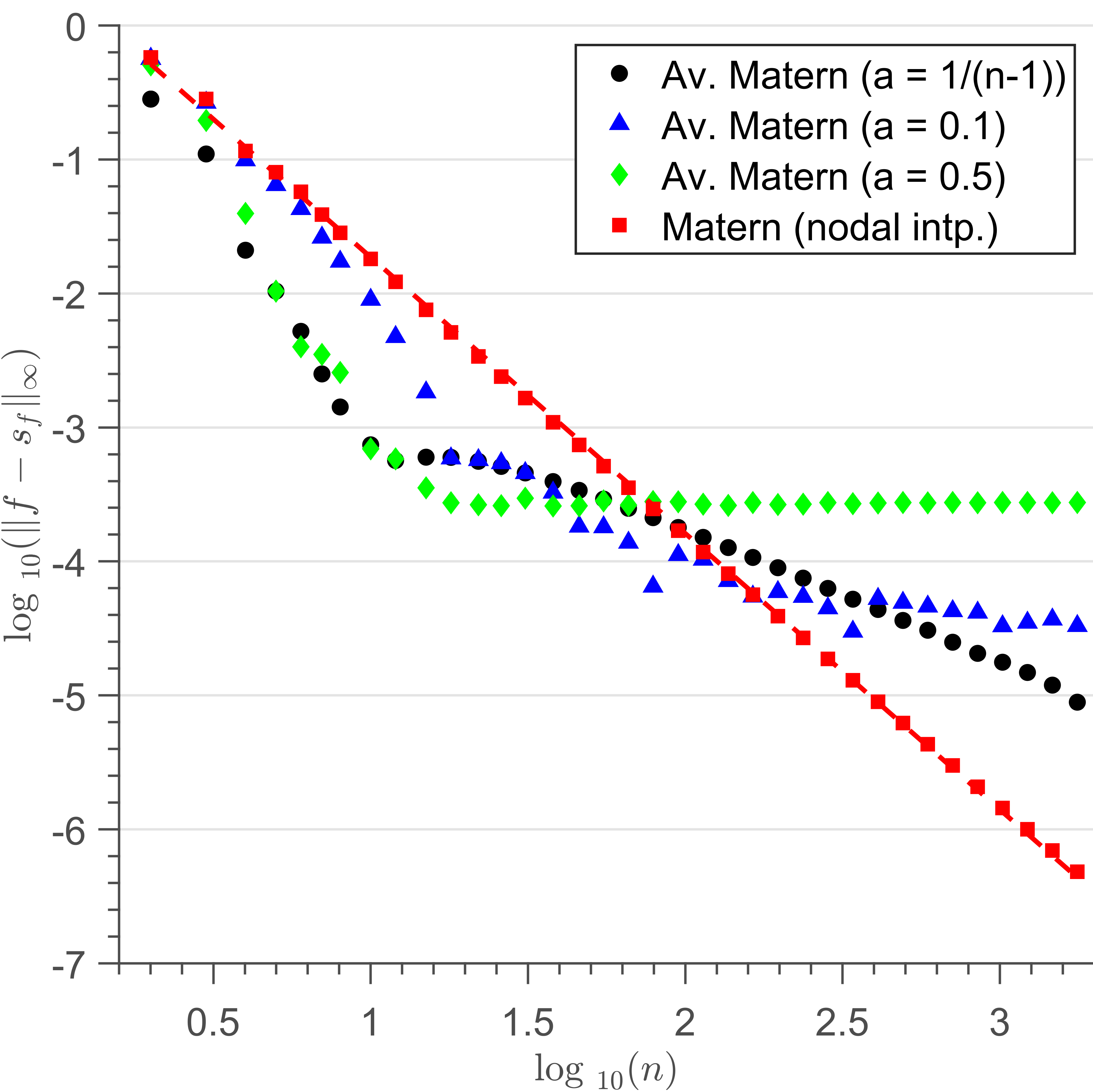} \qquad
    \includegraphics[width=0.45\linewidth]{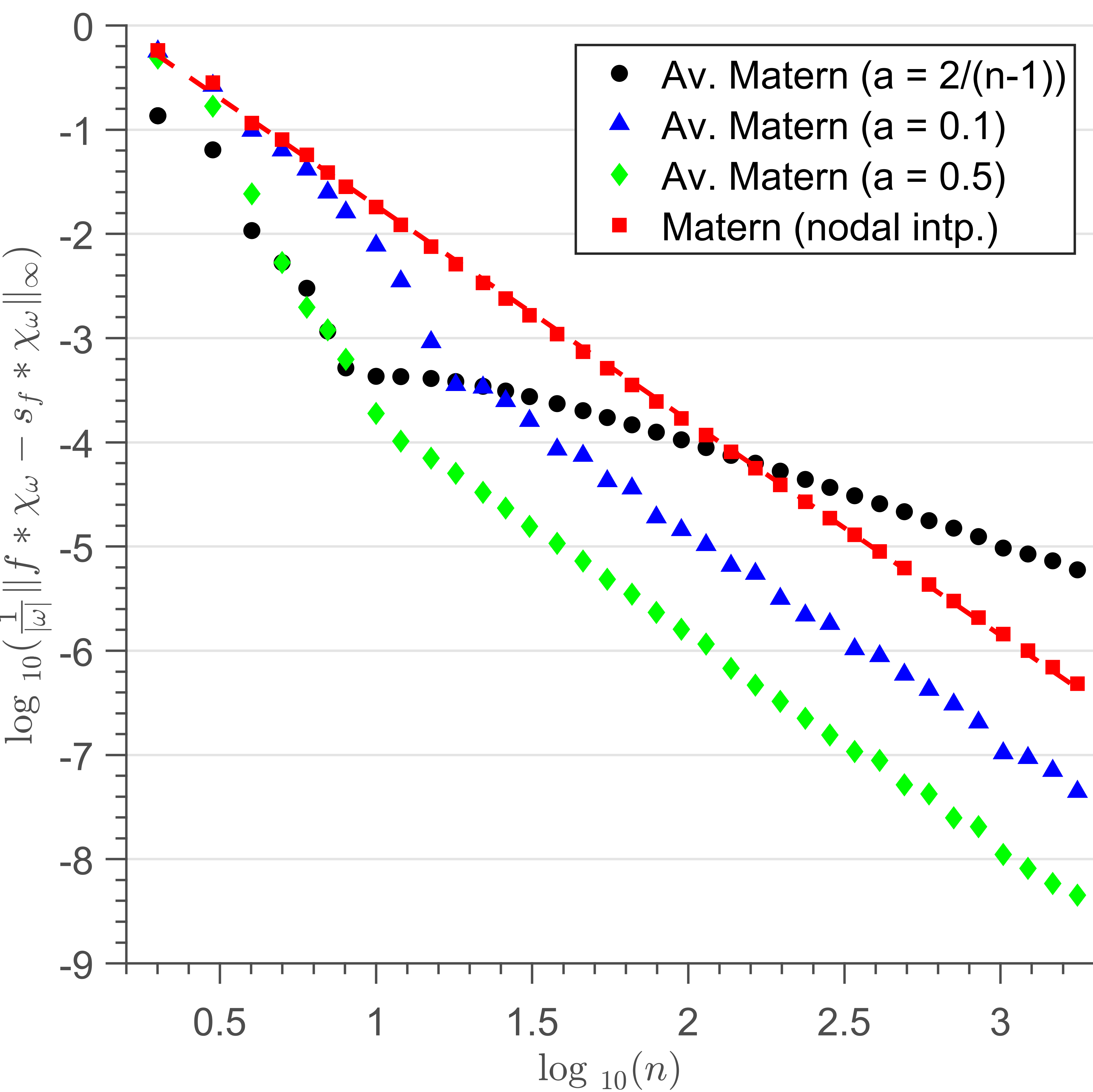} 
    \caption{Uniform errors (left) and uniform mean errors (right) of histopolation with the averaged Mat{\'e}rn kernel. While we do not get convergence in the uniform norm for fixed interval length, the respective error of the mean values converges uniformly with the same rate as for the Mat{\'e}rn interpolant. }
    \label{fig:convergencematern}
\end{figure}

We start with a 1D example and analyze the convergence of kernel histopolation on the interval $\Omega = [-1,1]$ using average values of the test function $ f(x) = \frac{1}{1+(x-0.4)^2}$. As averaging domains, we take $n$ equidistant centers $y_i = -1 + 2 \frac{i-1}{n-1} $ in $[-1,1]$ and consider the segments $\omega_{y_i} = [y_i-\frac{a}{2}, y_i + \frac{a}{2}]$ for different interval lengths $a$. To compare different interval lengths, we will use values $a = \frac{2}{n-1}$ that decrease with the number of data, as well as constant values $a = 0.1$ and $a = 0.5$. As averaging kernels, we will use the non-continuous indicator kernel introduced in \eqref{eq:indicatorkernel1D}, as well as the averaged Mat{\'e}rn kernel given in Example \ref{ex:RUAK} (i). The corresponding histopolants are calculated by summation of the basis kernel functions in formula \eqref{eq:approxAKHS}, where the expansion coefficients are obtained by the solution of the linear system \eqref{eq:mainhistopolationsystem}. The uniform norms 
$$ \|f - s_f\|_{\infty}\quad  \text{and}  \quad \left\|  \lambda_y (f - s_f) \right\|_{\infty} = \left\|  \frac{1}{a} \int_{y-\frac{a}{2}}^{y + \frac{a}{2}} (f(x) - s_f(x)) \de x \right\|_{\infty} $$
of the error $f - s_f$ and the averaged error on $\Omega = [-1,1]$ are visulized in Fig. \ref{fig:convergenceindicator} (for the indicator kernel) and in Fig. \ref{fig:convergencematern} (for the averaged Mat{\'e}rn kernel).

The results shown in the two figures shed some light on the convergence of histopolation. If the interval length $a$ is kept fixed, Fig. \ref{fig:convergenceindicator} and \ref{fig:convergencematern} show that for both kernels there is no uniform convergence, but the errors averaged over intervals of length $a$ do converge to zero. This aligns with the behavior illustrated in Fig.~\ref{fig:histopolationindicator}, where isolated peaks of the histopolant emerge and obstruct uniform convergence. Similar peaks can be observed also for the averaged Mat{\'e}rn kernel, but only at a lower error magnitude. The increased smoothness of the averaged Mat{\'e}rn kernel compared with the indicator kernel is reflected in its improved convergence rates. 

Fig.~\ref{fig:convergenceindicator} and \ref{fig:convergencematern} also display an interesting convergence phenomenon in the case that the interval lengths $a = \frac{2}{n-1}$ decrease with the number $n$ of segments. For the indicator kernel, the histopolant gives a piecewise constant approximation of $f$, yielding a linear rate of convergence of order $\frac{1}{n}$ in the uniform norm and of order $\frac{1}{n^2}$ for the uniform mean errors. Similarly, we observe convergence also for the averaged Mat{\'e}rn kernel if the segment length $a = \frac{2}{n-1}$ decresases. While in the uniform norm we get convergence of order $\frac{1}{n}$, this rate does not appear to improve when considering mean errors. In fact, in this case convergence becomes worse compared with scenarios of fixed interval length $a$. This particular phenomenon certainly deserves a more thorough investigation from a theoretical point of view.

\subsection{A 2D application to biomedical imaging}

A natural application of two-dimensional histopolation arises in imaging. Each pixel of a grayscale image can be interpreted as the average value of a bivariate function over a small square region. From this perspective, possible applications of histopolation are image upscaling or compression.

As an example, one may compress an image by combining neighboring pixels to a larger pixel and using the average value as a new pixel information (this is also known as pixel binning). Afterwards, histopolation can be applied to the resulting low-resolution object, re-obtaining a new image with the old dimensions. 
In Fig. \ref{fig:moon}, we applied this procedure to a $256 \times 256$ image and compared the original to the reconstructions obtained by histopolation. Due to the rectangular alignment of the pixels, we used the tensor-product of two univariate averaging Mat{\'e}rn kernels with parameter $\lambda = 1$ as underlying bivariate histopolation kernel.   

\begin{figure}
\centering
    \includegraphics[width = 0.96\textwidth]{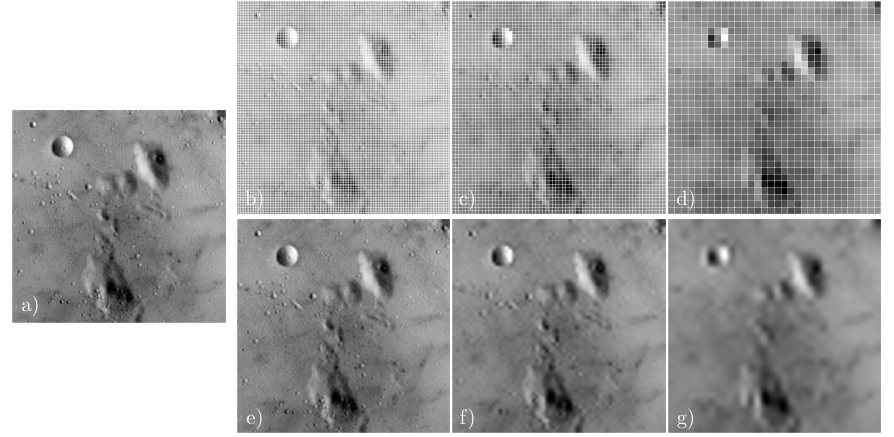} \,
\caption{a) Original image of size $ 256 \times 256 $; b)c)d) reduced image information based on $ 128 \times 128 $, $ 64 \times 64 $ and $ 32 \times 32 $ average pixel values; e)f)g) respective reconstructions based on kernel histopolation using the product Mat{\'e}rn kernel with parameter $\lambda = 1$.}
\label{fig:moon}
\end{figure}

Particularly in biomedical imaging, histopolation techniques prove to be effective. In Fig. \ref{fig:MRI}, we apply the histopolation algorithm to upscale an fMRI image of size $140 \times 140$, using the pixel values as averaging information. The resulting upscaled images are of size $280 \times 280$ and $420 \times 420$. As in the previous example, the histopolant is calculated using an averaged product Mat{\'e}rn kernel with $\lambda = 1$.  

\begin{figure}[h!]
\centering
    \includegraphics[width = 0.96\textwidth]{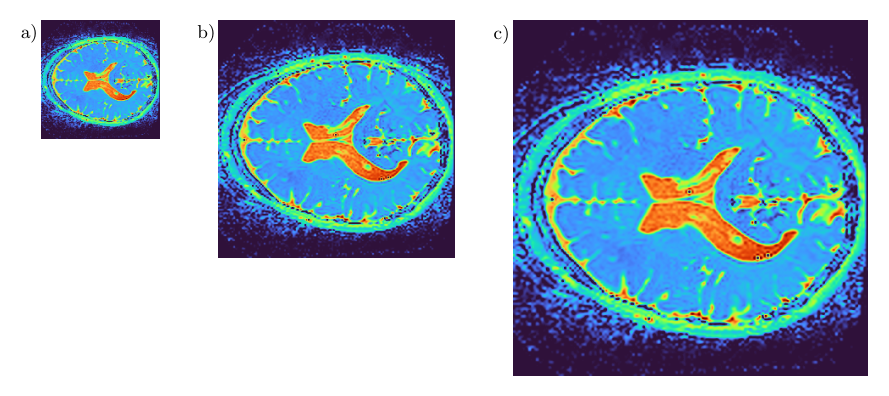} 
\caption{a) Original fMRI image of size $140 \times 140$, b) c) upscaled images of size $280 \times 280$ and $420 \times 420$ pixels using histopolation with the averaged product Mat{\'e}rn kernel and parameter $\lambda = 1$.}
\label{fig:MRI}
\end{figure}

\section*{Acknowledgements}
This project has been funded by the European Union – NextGenerationEU under the National Recovery and Resilience Plan (NRRP), Mission 4 Component 2 Investment 1.1 - Call PRIN 2022 No. 104 of February 2, 2022 of Italian Ministry of University and Research; Project 2022FHCNY3 (subject area: PE - Physical Sciences and Engineering) ``Computational mEthods for Medical Imaging (CEMI)''. Further support by the Indam research group GNCS (Indam-GNCS project CUP\_E53C24001950001) and the Italian Mathematical Union (research group UMI-TAA) is gratefully acknowledged.

\bibliographystyle{IMANUM-BIB}
\bibliography{bibkernel}

@book{Wendland_2004, 
place={Cambridge}, 
series={Cambridge Monographs on Applied and Computational Mathematics}, 
title={Scattered Data Approximation}, 
publisher={Cambridge University Press}, 
author={Wendland, H.}, 
year={2004}, 
collection={Cambridge Monographs on Applied and Computational Mathematics}
}

@article {BruniErb,
	AUTHOR = {Bruni Bruno, L. and Erb, W.},
	TITLE = {Polynomial interpolation of function averages on interval
		segments},
	JOURNAL = {SIAM J. Numer. Anal.},
	FJOURNAL = {SIAM Journal on Numerical Analysis},
	VOLUME = {62},
	YEAR = {2024},
	NUMBER = {4},
	PAGES = {1759--1781},
}

@article{butzer1988,
  author       = {Butzer, P. L. and Schmidt, M. and Stark, E. L.},
  title        = {Observations on the History of Central {B}-Splines},
  journal      = {Archive for History of Exact Sciences},
  volume       = {39},
  number       = {2},
  pages        = {137--156},
  year         = {1988},
  publisher    = {Springer},
}

@article {FasshauerOptimal,
    AUTHOR = {Fasshauer, G. E. and Zhang, J. G.},
     TITLE = {On choosing ``optimal'' shape parameters for {RBF}
              approximation},
   JOURNAL = {Numer. Algorithms},
  FJOURNAL = {Numerical Algorithms},
    VOLUME = {45},
      YEAR = {2007},
    NUMBER = {1-4},
     PAGES = {345--368},
}

@article {GlaubitzQuad,
    AUTHOR = {Glaubitz, J. and Reeger, J. A.},
     TITLE = {Towards stability results for global radial basis function based quadrature formulas},
   JOURNAL = {BIT},
  FJOURNAL = {BIT. Numerical Mathematics},
    VOLUME = {63},
      YEAR = {2023},
    NUMBER = {1},
     PAGES = {[Paper No. 6], 34},
}

@article {WendlandFiniteVolumes,
    AUTHOR = {Wendland, H.},
     TITLE = {On the convergence of a general class of finite volume
              methods},
   JOURNAL = {SIAM J. Numer. Anal.},
  FJOURNAL = {SIAM Journal on Numerical Analysis},
    VOLUME = {43},
      YEAR = {2005},
    NUMBER = {3},
     PAGES = {987--1002},
}

@article {GlaubitzIMA,
    AUTHOR = {Glaubitz, J.},
     TITLE = {Construction and application of provable positive and exact cubature formulas},
   JOURNAL = {IMA J. Numer. Anal.},
  FJOURNAL = {IMA Journal of Numerical Analysis},
    VOLUME = {43},
      YEAR = {2023},
    NUMBER = {3},
     PAGES = {1616--1652},
}

@article {Li2011,
  author    = {S. Li},
  title     = {Concise Formulas for the Area and Volume of a Hyperspherical Cap},
  journal   = {Asian Journal of Mathematics and Statistics},
  year      = {2011},
  volume    = {4},
  number    = {1},
  pages     = {66--70}
}

@article{vanDenBoogaart2007,
  author    = {K.~G. van den Boogaart and R. Hielscher and J. Prestin and H. Schaeben},
  title     = {Kernel‐based methods for inversion of the {R}adon transform on {SO}(3) and their applications to texture analysis},
  journal   = {Journal of Computational and Applied Mathematics},
  year      = {2007},
  volume    = {199},
  number    = {1},
  pages     = {122--140}
}

@article{Schaback1995,
  author    = {R. Schaback},
  title     = {Error estimates and condition numbers for radial basis function interpolation},
  journal   = {Advances in Computational Mathematics},
  year      = {1995},
  volume    = {3},
  number    = {3},
  pages     = {251--264},
}

@article{MadychNelson1992,
  author    = {W. R. Madych and S.A. Nelson},
  title     = {Bounds on multivariate polynomials and exponential error estimates for multiquadric interpolation},
  journal   = {Journal of Approximation Theory},
  year      = {1992},
  volume    = {70},
  number    = {1},
  pages     = {94--114},
}

@article{DeMarchiIskeSironi2016,
  author    = {S. {De Marchi} and A. Iske and A. Sironi},
  title     = {Kernel-based Image Reconstruction from Scattered {R}adon Data},
  journal   = {Dolomites Research Notes on Approximation},
  year      = {2016},
  volume    = {9},
  number    = {Special\_Issue\_4},
  pages     = {19--31},
}

@mastersthesis{Sironi2011,
  author       = {A. Sironi},
  title        = {Medical Image Reconstruction Using Kernel Based Methods},
  school       = {University of Padova, arxiv:1111.5844v1},
  year         = {2011},
}

@incollection{Iske1995,
  author    = {A. Iske},
  title     = {Reconstruction of functions from generalized {H}ermite-{B}irkhoff data},
  booktitle = {Approximation Theory VIII, Vol. 1: Approximation and Interpolation},
  editor    = {C. K. Chui and L. L. Schumaker},
  publisher = {World Scientific, Singapure},
  year      = {1995},
  pages     = {257--264},
}

@article{DeMarchiIskeSantin2018,
  author    = {S. {De Marchi} and A. Iske and G. Santin},
  title     = {Image reconstruction from scattered Radon data by weighted positive definite kernel functions},
  journal   = {Calcolo},
  year      = {2018},
  volume    = {55},
  number    = {2},
  pages     = {399–418},
}

@article{AboiyarGeorgoulisIske2010,
  author    = {T. Aboiyar and E. H. Georgoulis and A. Iske},
  title     = {Adaptive {ADER} methods using kernel-based polyharmonic spline {WENO} reconstruction},
  journal   = {SIAM Journal on Scientific Computing},
  year      = {2010},
  volume    = {32},
  number    = {6},
  pages     = {3251--3277},
}

@article{IskeSonar1996Structure,
  author    = {A. Iske and T. Sonar},
  title     = {On the structure of function spaces in optimal recovery of point functionals for {ENO}-schemes by radial basis functions},
  journal   = {Numerische Mathematik},
  year      = {1996},
  volume    = {74},
  number    = {2},
  pages     = {177--201},
}

@article{Wu1992,
  author    = {Z. Wu},
  title     = {{H}ermite--{B}irkhoff interpolation of scattered data by radial basis functions},
  journal   = {Approximation Theory and Applications},
  year      = {1992},
  volume    = {8},
  pages     = {1--10},
}

@article{AlbrechtIske2025,
  author    = {K. Albrecht and A. Iske},
  title     = {On the convergence of generalized kernel-based interpolation by greedy data selection algorithms},
  journal   = {Bit Numerical Mathematics},
  year      = {2025},
  volume    = {65},
  number    = {5},
}

@phdthesis{Albrecht2024,
  author       = {K. Albrecht},
  title        = {Kernel-Based Generalized Interpolation and its Application to Computerized Tomography},
  school       = {University of Hamburg},
  year         = {2024},
}

@article {WendlandStokes,
    AUTHOR = {Wendland, H.},
     TITLE = {Divergence-free kernel methods for approximating the {S}tokes
              problem},
   JOURNAL = {SIAM J. Numer. Anal.},
  FJOURNAL = {SIAM Journal on Numerical Analysis},
    VOLUME = {47},
      YEAR = {2009},
    NUMBER = {4},
     PAGES = {3158--3179},
}

@article {SonarFV,
    AUTHOR = {Sonar, T.},
     TITLE = {Optimal recovery using thin plate splines in finite volume
              methods for the numerical solution of hyperbolic conservation
              laws},
   JOURNAL = {IMA J. Numer. Anal.},
  FJOURNAL = {IMA Journal of Numerical Analysis},
    VOLUME = {16},
      YEAR = {1996},
    NUMBER = {4},
     PAGES = {549--581},
}

@article {SonarENO,
    AUTHOR = {Sonar, T.},
     TITLE = {On the construction of essentially non-oscillatory finite
              volume approximations to hyperbolic conservation laws on
              general triangulations: polynomial recovery, accuracy and
              stencil selection},
   JOURNAL = {Comput. Methods Appl. Mech. Engrg.},
  FJOURNAL = {Computer Methods in Applied Mechanics and Engineering},
    VOLUME = {140},
      YEAR = {1997},
    NUMBER = {1-2},
     PAGES = {157--181},
}

@article {SonarENO2,
    AUTHOR = {Sonar, T.},
     TITLE = {On families of pointwise optimal finite volume {ENO}
              approximations},
   JOURNAL = {SIAM J. Numer. Anal.},
  FJOURNAL = {SIAM Journal on Numerical Analysis},
    VOLUME = {35},
      YEAR = {1998},
    NUMBER = {6},
     PAGES = {2350--2369},
      ISSN = {0036-1429},
}

@unpublished{robidoux2006,
  author       = {Robidoux, N.},
  title        = {Polynomial histopolation, superconvergent degrees of freedom, and pseudospectral discrete Hodge operators},
  note         = {Technical Report, 2006},
  url          = {https://citeseerx.ist.psu.edu/document?repid=rep1&type=pdf&doi=7d9598cb27819bc85cedeb149368d4e213ce9f1c},
  year         = {2006},
}

@incollection{Schoenberg1973,
  author    = {Schoenberg, I.~J.},
  title     = {Splines and histograms},
  booktitle = {Spline Functions and Approximation Theory},
  editor    = {A. Meir and A. Sharma},
  series    = {International Series of Numerical Mathematics},
  volume    = {21},
  pages     = {277--327},
  publisher = {Birkhäuser Verlag},
  address   = {Basel},
  year      = {1973},
}

@article{FischerOja2007,
  author  = {M. Fischer and P. Oja},
  title   = {Convergence rate of rational spline histopolation},
  journal = {Mathematical Modelling and Analysis},
  volume  = {12},
  number  = {1},
  pages   = {29--38},
  year    = {2007},
}

@article{BruniAccioErbNudo2025,
  author  = {Bruni Bruno, L. and Dell'Accio, F. and Erb, W. and Nudo, F.},
  title   = {Polynomial histopolation on mock-{C}hebyshev segments},
  journal = {Journal of Scientific Computing},
  volume  = {104},
  number  = {2},
  year    = {2025},
}

@article{BruniErb2025,
  author  = {Bruni Bruno, L. and Erb, W.},
  title   = {The {F}ekete problem in segmental polynomial interpolation},
  journal = {BIT Numerical Mathematics},
  volume  = {65},
  number  = {3},
  year    = {2025},
}

@book{deBoor2001,
  author    = {de Boor, C.},
  title     = {A Practical Guide to Splines},
  series    = {Applied Mathematical Sciences},
  volume    = {27},
  edition   = {Revised},
  publisher = {Springer},
  address   = {New York},
  year      = {2001},
}

@incollection{Delvos1987,
  author    = {Delvos, F.-J.},
  title     = {Periodic Area Matching Interpolation},
  booktitle = {Numerical Methods of Approximation Theory: Workshop on Numerical Methods of Approximation Theory Oberwolfach, September 28–October 4, 1986},
  editor    = {Collatz, Lothar and Meinardus, Günther and Nürnberger, Günther},
  series    = {International Series of Numerical Mathematics},
  volume    = {81},
  pages     = {54--66},
  publisher = {Birkhäuser Basel},
  address   = {Basel},
  year      = {1987},
}

@article{Subbotin1977,
  author  = {Subbotin, Yu. N.},
  title   = {Extremal problems of functional interpolation, and mean interpolation splines},
  journal = {Proceedings of the Steklov Institute of Mathematics},
  volume  = {138},
  pages   = {127--185},
  year    = {1977},
}

@article{Siewer2008,
  author  = {Siewer, R.},
  title   = {Histopolating splines},
  journal = {Journal of Computational and Applied Mathematics},
  volume  = {220},
  number  = {1-2},
  pages   = {661--673},
  year    = {2008},
  doi     = {10.1016/j.cam.2007.09.014},
}

@book{Folland95,
  author    = {Folland, G. B.},
  title     = {A Course in Abstract Harmonic Analysis},
  publisher = {CRC Press},
  year      = {1995},
  series    = {Studies in Advanced Mathematics},
  address   = {Boca Raton},
  edition   = {1st}
}

@article{steinwart2024,
  title        = {Reproducing kernel {H}ilbert spaces cannot contain all continuous functions on a compact metric space},
  author       = {Steinwart, I.},
  journal      = {Archiv der Mathematik},
  volume       = {122},
  pages        = {553--557},
  year         = {2024},
  publisher    = {Springer}
}

@article{ehring2025,
  title        = {Escaping the native space of {S}obolev kernels by interpolation},
  author       = {Ehring, T. and Vogel, M.-P. and Haasdonk, B.},
  year         = {2025},
  eprint       = {2512.07262v1},
  archivePrefix= {arXiv},
  primaryClass = {math.NA}
}

@article{lin2022,
  title   = {On reproducing kernel {B}anach spaces: Generic definitions and unified framework of constructions},
  author  = {Lin, R. R. and Zhang, H. Z. and Zhang, J.},
  journal = {Acta Mathematica Sinica, English Series},
  volume  = {38},
  number  = {8},
  pages   = {1459--1483},
  year    = {2022},
  publisher = {Springer} 
}

\end{document}